\documentclass{article}
\usepackage[compress,sort]{natbib}
\usepackage[utf8]{inputenc}
\usepackage{float}
\usepackage{mathtools}
\usepackage{bm,booktabs,rotating}
\usepackage{amsmath,amsfonts}
\usepackage{amssymb}
\usepackage{graphicx}
\usepackage[unicode=true,pdfusetitle,
 bookmarks=true,bookmarksnumbered=false,bookmarksopen=false,
 breaklinks=false,pdfborder={0 0 1},backref=false,colorlinks=false]
 {hyperref}
\usepackage{multicol,soul}
\usepackage{multirow}
\usepackage{color}
\usepackage{setspace}
\usepackage{algorithm}
\usepackage{algpseudocode}
\usepackage{etoolbox}
\AtBeginEnvironment{algorithmic}{\small}
\algblock{ParFor}{EndParFor}
\algnewcommand\algorithmicparfor{\textbf{parfor}}
\algnewcommand\algorithmicpardo{\textbf{do}}
\algnewcommand\algorithmicendparfor{\textbf{end\ parfor}}
\algrenewtext{ParFor}[1]{\algorithmicparfor\ #1\ \algorithmicpardo}
\algrenewtext{EndParFor}{\algorithmicendparfor}

\usepackage{booktabs}
\usepackage{makecell}

\usepackage{mathtools, nccmath}

\usepackage{subcaption}

\algblock{Input}{EndInput}
\algnotext{EndInput}
\algblock{Output}{EndOutput}
\algnotext{EndOutput}

\floatstyle{ruled}
\newfloat{algorithm}{tbp}{loa}
\providecommand{\algorithmname}{Algorithm}
\floatname{algorithm}{\protect\algorithmname}

\usepackage{algorithm}
\usepackage{algorithmicx}
\usepackage{algpseudocode}
\usepackage{enumitem}

\newcommand{\algrule}[1][.2pt]{\par\vskip.5\baselineskip\hrule height #1\par\vskip.5\baselineskip}

\usepackage{tikz}
\usetikzlibrary{decorations.pathreplacing,calc}
\newcommand{\tikzmark}[1]{\tikz[overlay,remember picture] \node (#1) {};}

\makeatletter
    \algrenewcommand\alglinenumber[1]{\tikzmark{\arabic{ALG@line}}\tiny#1:}
\makeatother

\newtheorem{corollary}{Corollary}

\newtheorem{theorem}{Theorem}\newenvironment{proof}{\sl \noindent Proof: \rm}{$\Box$}
\newcommand{\ignore}[1]{}

\newtheorem{remark}{Remark}

\usepackage{bm}
\newcommand{\M}[2][]{\bm{#1{\mathbf{\MakeUppercase{#2}}}}}              
\newcommand{\Me}[3][]{\bm{#1{\mathbf{\MakeUppercase{#2}}}}({#3})}               
\newcommand{\Mb}[3][]{\bm{#1{\mathbf{\MakeUppercase{#2}}}}_{#3}}        
\newcommand{\R}{\mathbb{R}}
\newcommand{\argmin}{\operatorname*{arg\,min}}
\newcommand{\proj}{\operatorname{proj}}
\newcommand{\Wfun}{\operatorname{W}_0}
\usepackage{xcolor}

\title{Minimizing the Arithmetic and Communication Complexity of Jacobi's Method for Eigenvalues and Singular Values: \\ Part Two -- Parallel Algorithms}
\author{James Demmel\footnote{Department of EECS (Computer Science Division) and Department of Mathematics, University of California Berkeley} \and Hengrui Luo\footnote{Department of Statistics, Rice University}
\and Ryan Schneider\footnote{Department of Mathematics, University of California Berkeley}
\and Yifu Wang\footnote{Committee on Computational and Applied Mathematics, University of Chicago}}

\date{}

\begin{document}
\maketitle

\begin{abstract}

This paper presents several parallel versions of Jacobi's method for the symmetric eigenvalue problem and the SVD. A continuation of [Demmel, Luo, Schneider, \& Wang 2025], we develop parallel Jacobi algorithms whose arithmetic cost is optimal and whose bandwidth or latency can match the corresponding lower bounds of parallel matrix multiplication. Our focus is a standard distributed-memory setting with variable processor layouts, including both 2D and 2.5D processor grids. In the 2D case, we demonstrate that a standard implementation of parallel Jacobi achieves a perfect speedup in arithmetic cost -- i.e., complexity $O(n^3/P)$ when done with $P$ processors -- while hitting the 2D matrix-multiplication lower bound for bandwidth and (nearly) the lower bound for latency. By employing a 2.5D processor grid and leveraging 2.5D matrix multiplication, equivalently by increasing the memory per processor, we demonstrate that  parallel Jacobi can achieve even lower bandwidth/latency, though we also prove that these costs cannot simultaneously match the best-known bounds for parallel matrix multiplication in any Jacobi algorithm. Finally, we extend our results to one-sided Jacobi SVD. 

\end{abstract}

\section{Introduction}
Jacobi’s method is a classical algorithm for solving the symmetric eigenvalue problem \cite{Jacobi1846}. Given a dense symmetric matrix $\M{A} \in \mathbb{R}^{n \times n}$, Jacobi’s method applies a sequence of orthogonal similarity transformations that gradually annihilate its off-diagonal entries. This yields an approximately diagonal matrix $\M{D}$ and an orthogonal matrix $\M{Q}$ such that $\M{A} = \M{Q}\M{D}\M{Q}^T$, where the diagonal entries of $\M{D}$ approximate the eigenvalues of $\M{A}$ and the columns of $\M{Q}$ approximate its eigenvectors. Naturally, Jacobi’s method can also be used to compute the singular value decomposition (SVD) of an arbitrary matrix $\M{G} \in {\mathbb R}^{m \times n}$, as the right singular vectors and squared singular values of $\M{G}$ can be obtained by diagonalizing the symmetric Gram matrix $\M{G}^T \M{G}$.
The algorithm similarly extends to Hermitian matrices, though we focus on the real setting here for convenience.


Jacobi's method has remained relevant despite its age, largely because it enjoys two key numerical advantages over other algorithms for the symmetric eigenvalue problem. First, in comparison to 
methods that tridiagonalize $\M{A}$ (or bidiagonalize $\M{G}$ in the SVD setting) \cite{DHS89,Cuppen_dnc,sobczyk2024, dhillon2003orthogonal}, Jacobi's method attains tighter (relative) accuracy bounds for eigenvalues/singular values and eigenvectors/singular vectors \cite{athi2016real,shiri2019fpga,DemmelVeselic92}.  In addition, it is easily parallelizable and therefore well-suited for modern computer architectures -- see for example the many papers that have previously considered parallel Jacobi in detail \cite{Sameh_1971,Berry_Sameh_1989,Shroff_Schreiber_1989}.

\indent Classical Jacobi’s method works at the scalar level, annihilating one off-diagonal pair of entries $\M{A}(i,j)$ and $\M{A}(j,i)$ at a time via a $2 \times 2$ Givens rotation. Each such rotation preserves eigenvalues while decreasing the squared Frobenius norm of the off-diagonal part of $\M{A}$ by $2|\M{A}(i,j)|^2$. Once this off-diagonal ``norm" is sufficiently small, the algorithm terminates with an approximate diagonalization. To improve the efficiency of Jacobi's method, the algorithm can be \textit{blocked}, in which case two $b\times b$ off-diagonal blocks of $\M{A}$ are annihilated at each step instead of a pair of entries, which involves 
diagonalizing a $2b \times 2b$ submatrix (see Section~\ref{section: preliminaries} and \cite{arxiv_manuscript}). It is this blocked version of Jacobi's method that we consider parallelizing. 
Central to this effort is the following observation:\ as long as two off-diagonal blocks are in disjoint block rows and columns of $\M{A}$, then they can be annihilated in parallel. In a standard, distributed memory setting, each block of $\M{A}$ is owned by an individual processor and therefore has size, at most, that of the processor's fast memory (though we will later relax this assumption in Section~\ref{section: 2.5D_parallel}).

\indent As a continuation of \cite{arxiv_manuscript}, we consider parallelizing block Jacobi in a way that minimizes the associated arithmetic/communication costs. 
Since in the parallel setting processors execute concurrently, the runtime of a parallel algorithm is not determined by the total amount of work/communication required but rather the maximum amount performed by any processor along its \textit{critical path}, which refers to the longest set of serial operations within the algorithm. Accordingly, we define our cost measures (arithmetic, bandwidth, and latency) as follows, where $P$ is the number of processors and $n$ is the size of the input matrix. The notation used here follows the standard $\alpha$-$\beta$-$\gamma$ model for communication-avoiding numerical linear algebra \cite{hockney1994communication,solomonik2017trade}.
\begin{enumerate}
\item \emph{Latency cost} $\alpha \cdot S$, proportional to the number of messages $S=S(n,P)$ sent between processors along the critical path, where $\alpha$ is the per-message startup time.
\item \emph{Bandwidth cost} $\beta \cdot W$, proportional to the maximum number of words $W=W(n,P)$ communicated by any processor along the critical path, where $\beta$ is the amount of time required to move a single word. 
\item \emph{Arithmetic cost} $\gamma \cdot F$, proportional to the maximum number of floating-point operations $F=F(n,P)$ performed by any processor along the critical path, where $\gamma$ is the time per flop for data in fast memory.
\end{enumerate}

\indent In this model, latency and bandwidth comprise the total communication cost of an algorithm. For simplicity, we assume that (1) communication and computation do not overlap on each processor and (2) that there is no communication contention across the processor grid. Under these assumptions, the predicted runtime of a parallel algorithm is represented by 
\begin{equation}
T = \alpha\cdot S \;+\; \beta\cdot W \;+\; \gamma\cdot F.\label{eq:total cost model}
\end{equation}

\indent For Jacobi's method, optimal complexity is measured relative to the arithmetic, bandwidth, and latency costs of parallel matrix multiplication, specifically done in a distributed-memory fashion over the same processor grid. This is motivated by the fact that the symmetric eigenvalue problem is at least as difficult as matrix multiplication (see \cite[Appendix A]{arxiv_manuscript}). Table~\ref{tab: complexity_bounds} lists the complexity bounds for parallel Jacobi proved in this paper and, for comparison, the best-known bounds for other symmetric eigensolvers.  Here, we will focus on 2D and 2.5D processor layouts (see Section~\ref{section: preliminaries}); for our purposes, optimal complexity is therefore measured relative to the first two rows of table (a).

\begin{remark} {\normalfont 
    Table~\ref{tab: complexity_bounds} does not include the algorithm of Yau and Lu \cite{Yau_Lu}, despite the fact that it has been parallelized in the past \cite{tisseur1997parallel} and potentially offers even better asymptotic complexity than its alternatives. In particular, Yau and Lu show that their algorithm reduces to logarithmically many matrix multiplications, which suggests that -- in parallel -- it may be capable of achieving the optimal bandwidth and latency costs of 2.5D matrix multiplication simultaneously (which we show Jacobi cannot do in Section~\ref{section: lower_bound}). To the best of our knowledge, this has not yet been established rigorously in the literature, so we leave it as an open problem here. Similarly, we mark the bandwidth/latency costs of QDWH-eig  \cite{Nakatsukasa_Higham_2013} as open since, while we expect it to attain the bounds of 2D parallel matrix multiplication, this has not been shown in full detail.}
\end{remark}

\begin{table}[t]
    \centering
    \renewcommand{\arraystretch}{1.6}
    \renewcommand\multirowsetup{\centering}

    \setlength{\tabcolsep}{3pt}

    \begin{subtable}{\textwidth}
        \centering
        \makebox[\textwidth]{
        \begin{tabular}{
            >{\centering\arraybackslash}p{2cm}   
            >{\centering\arraybackslash}p{1.5cm} 
            >{\centering\arraybackslash}p{2.3cm} 
            >{\centering\arraybackslash}p{2.3cm} 
            >{\centering\arraybackslash}p{2.4cm} 
            >{\centering\arraybackslash}p{2.5cm}   
        }
            \toprule
             \textbf{Algorithm} & \textbf{Source} & \textbf{Arithmetic} & \textbf{Bandwidth} & \textbf{Latency} & \textbf{Notes}  \\
            \midrule
            2D & \cite{Cannon69,vdGW97} & $O(n^3/P)$ & $O(n^2/\sqrt{P})$ & $O(\sqrt{P})$ & \\
            2.5D & \cite{SD11} & $O(n^3/P)$ & $O(n^2/\sqrt{cP})$ & {\small $O(\sqrt{P/c^3}+\log c)$} & $1\leq c\leq P^{1/3}$\\
            3D & \cite{ABGJP95} & $O(n^3/P)$ & $O(n^2/P^{2/3})$ & $O(\log P)$ &  \\
            \bottomrule
        \end{tabular}
        }
        \caption{Complexity upper bounds for parallel matrix multiplication}
        \label{tab: complexity_bounds_matmul}
    \end{subtable}

    \vspace{1.5em}

    \begin{subtable}{\textwidth}
        \centering
        \makebox[\textwidth]{
        \begin{tabular}{
            >{\centering\arraybackslash}p{1.8cm} 
            >{\centering\arraybackslash}p{2cm}   
            >{\centering\arraybackslash}p{1.8cm} 
            >{\centering\arraybackslash}p{2.3cm} 
            >{\centering\arraybackslash}p{2.3cm} 
            >{\centering\arraybackslash}p{2.4cm} 
            >{\centering\arraybackslash}p{2.6cm}   
        }
            \toprule
            \textbf{Type} & \textbf{Algorithm} & \textbf{Source} & \textbf{Arithmetic} & \textbf{Bandwidth} & \textbf{Latency} & \textbf{Notes}  \\
            \midrule

            \multirow{2}{*}{\makecell{Tridiagonal \\ Reduction}} & 2D & \cite{Lang_parallel,MRRR_parallel,dnc_parallel,BDK13-TR} & $O(n^3/P)$ & $O(n^2/\sqrt{P})$& $O(\sqrt{P})$ \\
            & 2.5D & \cite{2.5D_QR_Band_Reduction} & $O(n^3/P)$ & $O(n^2/\sqrt{cP})$ & $O(\sqrt{cP}\log^2P)$ & \makecell{$1 \leq c \leq P^{1/3}$ \\ Values only}  \\
            \midrule

            \multirow{2}{*}{\makecell{Divide-and-\\ Conquer}} & QDWH-eig & \cite{Nakatsukasa_Higham_2013,Sukkari_etal_2019} & $O(n^3/P)$  & Open & Open\\
            & Randomized & \cite{BDD11,Shah_Hermitian,definite_dnc} & $O(n^3/P)$ & $O(\frac{n^2}{\sqrt{P}}\log P)$ & $O(\sqrt{P}\log^2 P)$ &  \\
            \midrule

            \multirow{2}{*}{Jacobi} & 2D & Section~\ref{section: 2D_parallel} & $O(n^3/P)$ & $O(n^2/\sqrt{P})$ & $O(\sqrt{P}\log P)$ & \\
            & {2.5D} & Section~\ref{section: 2.5D_parallel} & $O(n^3/P)$ & $O(n^2/\sqrt{cP})$ & $O(\sqrt{cP}\log(P/c))$ & $1\leq c\leq P^{1/5}$ 
            \\ 


            \bottomrule
        \end{tabular}
        }
        \caption{Complexity upper bounds for various parallel symmetric eigensolvers.}
        \label{tab: complexity_bounds_eig_svd}
    \end{subtable}

    \caption{Complexity bounds -- e.g., critical-path-specific flop, bandwidth, and latency costs -- for (parallel) matrix computations and state-of-the-art algorithms.
    }
    \label{tab: complexity_bounds}
\end{table}

\noindent The remainder of the paper is organized as follows:
\begin{enumerate}
    \item Section~\ref{section: preliminaries} covers the basics of blocked Jacobi's method and introduces the processor layouts considered in this paper (i.e., 2D and 2.5D grids).
    \item Section~\ref{section: 2D_parallel} presents a detailed analysis (with pseudocode) of a parallel version of Jacobi's method on a 2D processor grid. We show that this version of parallel Jacobi achieves arithmetic, bandwidth, and latency costs of $O(n^3/P)$, $O(n^2/\sqrt{P})$, and $O(\sqrt{P}\log P)$, respectively. Hence, its arithmetic and bandwidth costs are optimal, while its latency differs from the 2D matrix-multiplication bound by a factor of $\log P$. 
    We note that the 2D parallel Jacobi algorithm considered in this section has multiple variants and has been analyzed many times in the literature \cite{Shroff_Schreiber_1989,Drmac2009,Hari_2015,VL_Jacobi}.
    \item Section~\ref{section: 2.5D_parallel} extends the 2D approach to a 2.5D processor grid using 2.5D matrix multiplication \cite{SD11}. In this setting, each processor has more than the minimum $\Omega(n^2/P)$ memory, specifically larger by a multiplicative \textit{replication factor} $c$ -- i.e., we replicate data across layers of the processor grid to reduce communication, which requires a minimum memory of size $\Omega(cn^2/P)$ on each processor. We bound the complexity of the resulting 2.5D version of parallel Jacobi and exhibit two optimal  configurations:\ one that is latency optimal and another that is bandwidth optimal.
    \item Section~\ref{section: lower_bound} establishes a lower bound of $\Omega(n^2)$ for the product of bandwidth and latency in any parallel Jacobi algorithm, which we note is attained (up to log factors) by every version of parallel Jacobi's method considered in this paper. This lower bound implies that, when $c > 1$, no parallel Jacobi algorithm can simultaneously hit the best-known bandwidth/latency costs of parallel matrix multiplication. 
    \item Finally, Section~\ref{section: parallel_jacobi_svd} extends our analysis to parallel versions of one-sided Jacobi SVD.
\end{enumerate}

\noindent We close the paper with a summary of our results and a brief discussion of remaining open problems.

\subsection{Notation and Conventions}
Throughout, matrices are represented with bold letters and $\mathbf{A} \in {\mathbb R}^{n \times n}$ is assumed to be symmetric. $|| \cdot ||_2$ is the spectral norm and $\mathbf{A}^{\dagger}$ is the Moore-Penrose pseudoinverse. Complexity bounds are stated in standard big-O notation:\ given $f(n)$ and $g(n)$ two positive functions of $n$, we have $f(n) = O(g(n))$ if $f(n) \leq Cg(n)$ for all $n$ sufficiently large, where $C > 0$ is a constant. Similarly, $f(n) = \Omega(g(n))$ if $C'g(n) \leq f(n)$, again for $n$ sufficiently large and $C' > 0$. Finally, $f(n) = \Theta(g(n))$ if $f(n) = O(g(n))$ and $f(n) = \Omega(g(n))$.

\section{Preliminaries}\label{section: preliminaries}

This section contains the background information necessary for the rest of the paper; we first summarize the blocked version of Jacobi's method to be parallelized before discussing the processor layouts used in the subsequent sections. 

\subsection{Blocking Jacobi's Method}\label{section: block_jacobi}
The simplest way to improve the efficiency of Jacobi's method -- in either the serial or parallel setting -- is to consider blocking the algorithm. A so-called \textit{blocked} version of Jacobi's method operates on submatrices $\M{A}_{IJ}$ defined as follows:
\begin{equation}\label{eqn: A_subblock}
    \Mb{A}{IJ}=\Me{A}{(I{-}1)b{+}1:Ib,(J{-}1)b{+}1:Jb},
\end{equation}
where $b$ is the block size and $1 \leq I,J \leq n/b$ are integers, assuming $b$ divides $n$. In this setting, to annihilate the off-diagonal block $\M{A}_{IJ}$, Jacobi's method diagonalizes the $2b \times 2b$ submatrix 
\begin{equation}
    \hat{\M{A}} = \M{A}([I,J],[I,J]) = \begin{pmatrix} \M{A}_{II} & \M{A}_{IJ} \\ \M{A}_{JI} & \M{A}_{JJ} \end{pmatrix}
\end{equation} and applies the resulting orthogonal eigenvector matrix $\hat{\M{Q}}$ to block columns/rows $I$ and $J$. Pseudocode for this procedure is presented in  Algorithm~\ref{alg: block Jacobi}. 

\begin{algorithm}[h]
\caption{Block Jacobi for the Symmetric Eigenvalue Problem}
\label{alg: block Jacobi} \begin{algorithmic}[1] \Require $\M{A} \in {\mathbb R}^{n \times n}$ is symmetric and partitioned as follows (for block size $b$ that divides $n$):
\[
\Mb{A}{IJ}=\Me{A}{(I{-}1)b{+}1:Ib,(J{-}1)b{+}1:Jb} \; \; \; \; \text{for} \; \; \; \; 1 \leq I,J \leq n/b.
\]
\Ensure On output, $\M{A}$ is an (approximately) diagonal matrix $\M{D}$ containing the eigenvalues of $\M{A}$. If eigenvectors are requested, the additional output $\M{Q}$ is an orthogonal matrix of (approximate) eigenvectors satisfying $\M{A} = \M{Q}\M{D}\M{Q}^{T}$. 
\algrule
\Function{$[\M{Q},\M{A}]=$
Block\_Jacobi}{$\M{A}$} \State $\M{Q}=\M{I}$
\Repeat \For{all off-diagonal blocks $(I,J)$ of $\M{A}$, $I<J$,
in some order,} \State $\M{\hat{A}}=\M{A}([I,J],[I,J])$ \Comment{$\M{\hat{A}}=$ $2b \times 2b$ submatrix of $\M{A}$ in
block rows/columns $I$ and $J$} \If{$\M{\hat{A}}$ is far
enough from diagonal} \State Let $\M{\hat{A}}=\M{\hat{Q}}\M{\hat{D}}\M{\hat{Q}}^{T}$
be the eigen decomposition of $\M{\hat{A}}$ 
\State Multiply block rows $I$ and $J$ of $\M{A}$ by $\M{\hat{Q}}^{T}$
\State Multiply block columns $I$ and $J$ of $\M{A}$ by $\M{\hat{Q}}$
\State Multiply block columns $I$ and
$J$ of $\M{Q}$ by $\M{\hat{Q}}$ \EndIf \EndFor \Until{all off-diagonal
entries of $\M{A}$ are small enough} \EndFunction \end{algorithmic} 
\end{algorithm}
The work performed by block Jacobi can be naturally partitioned into \textit{sweeps}, each of which corresponds to one pass over all off-diagonal blocks of $\M{A}$ (equivalently, the outer loop of Algorithm~\ref{alg: block Jacobi}). The details of each sweep are flexible; see~\cite[Section~3]{arxiv_manuscript} for a discussion of possible block orderings (to use in line 4) and criteria for applying a block rotation (in line 6). Progress toward diagonal form is usually measured by the off-diagonal Frobenius ``norm," which we write as
\begin{equation}\label{eqn: Omega_A}
   \Xi(\M{A})
   \;\coloneqq\;
   \sqrt{\sum_{i\neq j} \bigl|\M{A}(i,j)\bigr|^{2}}.
\end{equation}
Convergence with respect to~\eqref{eqn: Omega_A} means that $\Xi(\M{A}) \rightarrow 0$ as the number of sweeps grows. \\
\indent Intuitively, the runtime of block Jacobi is proportional to the runtime of a single sweep. That is, in the $\alpha$-$\beta$-$\gamma$ model \eqref{eq:total cost model}, the total cost of Algorithm~\ref{alg: block Jacobi} is
\begin{align}
   T
   \;=\;
   N \cdot
   T_{\mathrm{sweep}}
   \;=\;
   N
   \bigl(
      \alpha \cdot S_{\mathrm{sweep}}
      + \beta \cdot W_{\mathrm{sweep}}
      + \gamma \cdot F_{\mathrm{sweep}}
   \bigr),\label{eq:sweep cost}
\end{align}
where $N$ is the number of sweeps performed and
$S_{\mathrm{sweep}}, W_{\mathrm{sweep}},$ and $F_{\mathrm{sweep}}$ denote, respectively, the number of messages, words moved, and floating-point operations incurred by an individual sweep. \\
\indent In practice, $N$ is the number of sweeps required for $\Xi(\M{A})$ to fall below a preset tolerance, assuming that the algorithm converges at all. Drma\v{c}~\cite{Drmac2009} (see also \cite{arxiv_manuscript}) showed that convergence can be guaranteed provided block Jacobi is implemented with the following choices:
\begin{enumerate}
    \item Off-diagonal blocks are handled according to a standard row/column cyclic ordering.
    \item A \textit{pivoting step} is included, in which (1) column-pivoted QR (QRCP) or LU with partial pivoting (LUPP) is applied to the leading rows of $\hat{\M{Q}}$ 
    to generate a permutation matrix $\M{P}$ and (2) the update $\hat{\M{Q}} \leftarrow \hat{\M{Q}} \M{P}$ is applied. In Algorithm~\ref{alg: block Jacobi}, this would occur after line 7.
\end{enumerate}
Importantly, these design choices determine the convergence behavior of block Jacobi, and hence the value of $N$, but do not change the asymptotic order of the per-sweep costs $S_{\mathrm{sweep}}, W_{\mathrm{sweep}},$ and $F_{\mathrm{sweep}}. $ 

\indent Our focus in this paper is complexity and not convergence. For this reason, we omit the pivoting step from Algorithm~\ref{alg: block Jacobi} and do the same in all subsequent versions of Jacobi's method. While this allows us to write streamlined pseudocode, it also means that none of our algorithms come with a guarantee of convergence. Accordingly, the  complexity bounds we present are stated for only a single sweep of each algorithm. Nevertheless, they capture the general cost of the full Jacobi procedure. We note in particular the following:
\begin{itemize}
    \item As explored in \cite{arxiv_manuscript}, failure examples for block Jacobi are rare, meaning a naive implementation (i.e., one that omits pivoting) typically converges in $O(1)$ sweeps without any additional help, in which case our complexity bounds apply as is.
    \item If we are concerned about convergence, the machinery necessary to guarantee it can be added to our algorithms without altering their single-sweep complexities. We discuss this in  more detail in Sections~\ref{section: 2D_parallel} and~\ref{section: 2.5D_parallel}.
\end{itemize}

\subsection{Processor Layouts}\label{section: processor_layouts}
The complexity of a single sweep of parallel Jacobi depends not only on the algebraic operations performed but also on the way that data/work are distributed across the $P$ processors and how those processors are arranged relative to one another. As has been established for a broad class of dense linear algebra algorithms~\cite{BDD11,2.5D_QR_Band_Reduction,Solomonik_thesis,3D_QR} -- and is captured for distributed-memory matrix multiplication in Table~\ref{tab: complexity_bounds} -- the potential variability in complexity, especially bandwidth/latency, can be significant. \\
\indent In this paper, we consider parallelizing (blocked) Jacobi's method on the following processor grids. In both cases, we assume that processors have access to the collective communication operations summarized in Table~\ref{tab:mpi_routines} (see also \cite{thakur05:_optim_of_collec_commun_operat_in_mpich}).
\begin{enumerate}
    \item \textbf{2D Layout:} Processors are arranged in a $\sqrt{P}\times \sqrt{P}$ grid and have fast memory of size $\Theta(n^2/P)$. Input matrices are block distributed over the processors -- i.e., each one owns a contiguous block that is roughly the size of its fast memory. In this setting, collective communication is largely confined to local processor rows/columns, which can reduce communication costs (compared to a single 1D row/column of processors). 
    \item \textbf{2.5D Layout:} Processors are arranged in a $\sqrt{P/c} \times \sqrt{P/c} \times c$ grid for a \textit{replication factor} $c$ satisfying $1 \leq c \leq P^{1/3}$. The third dimension here corresponds to grid \textit{layers}, where each layer (there are $c$ in total) is itself a 2D grid of size $\sqrt{P/c} \times \sqrt{P/c}$. In this setup, input matrices are typically stored on the top (or bottom) layer of the grid and replicated across the others -- e.g., so that each 2D layer has the inputs necessary to perform a local computation, which is eventually merged with computations done by other layers via a global reduction. This can further reduce communication at the expense of increased memory; to store an $n \times n$ matrix on every layer, note that the fast memory available to each processor must increase to size $\Omega(cn^2/P)$. This setting generalizes standard 2D and 3D processor layouts, which correspond to $c = 1$ and $c = P^{1/3}$, respectively.

    
    
\end{enumerate}

\renewcommand{\arraystretch}{1.3}
\renewcommand\multirowsetup{\centering}
\begin{table}[t]
\centering
\begin{tabular}{l p{7cm} l}
\toprule
\textbf{Routine} & \textbf{Description} & \textbf{Cost} \\ 
\midrule
Scatter   & A root scatters $n$ words, each processor receives $n/P$ words
          & $\alpha \cdot \log_2 P + \beta \cdot n$ \\[2pt]
Broadcast & A root broadcasts $n$ words to all processors
          & $\alpha \cdot 2\log_2 P + \beta \cdot 2n$ \\[2pt]
Gather    & Each processor sends $n/P$ words, which are gathered on the root
          & $\alpha \cdot \log_2 P + \beta \cdot n$ \\[2pt]
Reduce    & Reduction on $n$ words from each processor, result returned on the root
          & $\alpha \cdot 2\log_2 P + \beta \cdot 2n + \gamma \cdot n$ \\[2pt]

\bottomrule
\end{tabular}
\caption{Collective communication operations on $P$ processors and their associated costs (in the $\alpha$--$\beta$--$\gamma$ model of \eqref{eq:total cost model}). In each case, we assume $n \geq P$. The \emph{root} is the distinguished processor that initiates the operation (for scatter and broadcast) or receives the final result (for gather and reduce). For more background, see \cite{thakur05:_optim_of_collec_commun_operat_in_mpich}.}
\label{tab:mpi_routines}
\end{table}
\renewcommand{\arraystretch}{1}

 Sections~\ref{section: 2D_parallel} and ~\ref{section: 2.5D_parallel} cover our 2D and 2.5D versions of parallel Jacobi. Our goal throughout is to show that (1) 2D parallel Jacobi can (nearly) attain the complexity bounds of 2D matrix multiplication \cite{Cannon69,vdGW97}
 and (2) that -- as with other dense linear algebra operations (see again the references above) -- moving to a 2.5D processor grid can further reduce communication costs. \\
 \indent While 2D parallel Jacobi algorithms have been considered in the literature before, we note that the 2.5D version presented here is new. This is not, however, the first 2.5D algorithm for the symmetric eigenvalue problem. In \cite{2.5D_QR_Band_Reduction}, Solomonik et al.\ present a 2.5D band reduction algorithm, which can compute the eigenvalues of a symmetric matrix at reduced bandwidth cost. Nevertheless, this algorithm outputs eigenvectors implicitly as products of Householder reflectors, and computing them explicitly may require an additional $O(n^3 \log P/P)$ flops. As we demonstrate in Section~\ref{section: 2.5D_parallel}, our 2.5D version of parallel Jacobi can match the improved bandwidth of this algorithm while outputting a full set of eigenvalues and eigenvectors. 


\section{2D Parallel Jacobi}\label{section: 2D_parallel}

The first algorithm we introduce is 2D parallel Jacobi, where the processor grid and matrix partitioning follow the 2D layout described in Section~\ref{section: processor_layouts}. The setup here is somewhat standard (see  e.g., \cite{Sameh_1971,Berry_Sameh_1989,Shroff_Schreiber_1989}). Our goal in this section is to exhibit (in general) the near optimality of these algorithms. \\
\indent We begin by summarizing the relevant notation. For 2D parallel Jacobi, we consider $P$ processors arranged in a two-dimensional grid of size $\sqrt{P} \times \sqrt{P}$. Each processor is labeled Proc$(I,J)$ for a pair of indices $1 \leq I,J \leq \sqrt{P}$, which denote the row/column of the grid that the processor belongs to. The input matrix $\M A \in \mathbb{R}^{n\times n}$ (likewise the iteratively constructed eigenvector matrix $\M{Q}$) is blockwise partitioned with the same indexing -- i.e., the subblock assigned to Proc$(I,J)$ is $\M{A}_{IJ}$ as defined in \eqref{eqn: A_subblock}, where we take $b = n/\sqrt{P}$. For convenience, we'll assume that $\sqrt{P}$ and $b$ are both integers.

\indent The key insight to implementing block Jacobi in parallel is the following:\ multiple $2b \times 2b$ rotations can be done simultaneously provided the blocks being annihilated are suitably independent, meaning they lie in different (block) rows/columns of $\M{A}$. Hence, following \cite{megiddo1983applying}, we can avoid the serial bottleneck of Algorithm~\ref{alg: block Jacobi} by partitioning off-diagonal blocks of $\M{A}$ into groups that can be handled in parallel. Of course, to make the most of efficiency gains, we need to ensure this can be done in a roughly load-balanced fashion; that is, can enough rotations be done at once to keep all (or most) of the processors busy? For our $\sqrt{P} \times \sqrt{P}$ block matrix we note the following:
\begin{enumerate}
\item First, we formalize the notion of block independence:\ if $\Mb{A}{IJ}$ and $\Mb{A}{KL}$ are two super-diagonal blocks of
$\M{A}$ (i.e., $I<J$ and $K<L$) that lie in different rows and columns (so $\{I,J\}\cap\{K,L\}=\emptyset$),
then their block Jacobi transformations can be computed independently
and applied in parallel. We call such transformations {\em disjoint}.
Blocks like $\Mb{A}{IK}$, which might require updating by both transformations,
require (matrix) multiplication by one transformation on the left and by the other on the right. Hence, they can be updated in any order. 
\item There are a total of $\sqrt{P}(\sqrt{P}-1)/2$ pairs $(I,J)$ with $1 \leq I < J \leq \sqrt{P}$. We can partition these block indices into $\sqrt{P}$ groups, each of size around $\sqrt{P}/2$, such that each group contains pairwise disjoint pairs (equivalently, at most one block in each row or column). For example, when $\sqrt{P}=8$, we can group the superdiagonal blocks into 8 groups of size 3 or 4 as follows. Here, superdiagonal blocks are labeled by their group number:
\begin{equation}
\begin{bmatrix}* & 7 & 6 & 5 & 4 & 3 & 2 & 1\\
 & * & 5 & 4 & 3 & 2 & 1 & 8\\
 &  & * & 3 & 2 & 1 & 8 & 7\\
 &  &  & * & 1 & 8 & 7 & 6\\
 &  &  &  & * & 7 & 6 & 5\\
 &  &  &  &  & * & 5 & 4\\
 &  &  &  &  &  & * & 3\\
 &  &  &  &  &  &  & *
\end{bmatrix}\label{eqn:disjoint}
\end{equation}

\item Since 
each of these groups consists of one or two diagonals, it is easy to see that their transformations are pairwise disjoint. We illustrate this below with group one, where we now label the four members of the group with different numbers and use them to indicate which blocks of $\M{A}$ their corresponding transformations are applied to. For example, the label ``12" means that transformation 1 is applied on the left and transformation 2 on the right (while the label \textbf{3} means transformation 3 is applied on both the left and right). If eigenvectors are requested, then the transformations are also applied on the right of a cumulative rotation matrix $\M{Q}$.
\begin{equation}\label{eqn: group_one_transformation}
\begin{bmatrix}{\bf 4} & 43 & 42 & 41 & 41 & 42 & 43 & {\bf 4}\\
 & {\bf 3} & 32 & 31 & 31 & 32 & {\bf 3} & 34\\
 &  & {\bf 2} & 21 & 21 & {\bf 2} & 23 & 24\\
 &  &  & {\bf 1} & {\bf 1} & 12 & 13 & 14\\
 &  &  &  & {\bf 1} & 12 & 13 & 14\\
 &  &  &  &  & {\bf 2} & 23 & 24\\
 &  &  &  &  &  & {\bf 3} & 34\\
 &  &  &  &  &  &  & {\bf 4}
\end{bmatrix}
\end{equation}
\item Since we assume that each block is assigned to a different processor (in this case, $P = 64$) the work represented by \eqref{eqn: group_one_transformation} is load
balanced. This is approximately true for the other groups in (\ref{eqn:disjoint}), even though they do not necessarily update all block rows/columns from the left and right. For example, group two does not update blocks (4,4), (4,8) and (8,8) (missing updates are indicated by ``\_''): 
\begin{equation}\label{eqn: alt_transformations}
\begin{bmatrix}{\bf 3} & 32 & 31 & 3\_ & 31 & 32 & {\bf 3} & 3\_\\
 & {\bf 2} & 21 & 2\_ & 21 & {\bf 2} & 23 & 2\_\\
 &  & {\bf 1} & 1\_ & {\bf 1} & 12 & 13 & 1\_\\
 &  &  & \_\_ & \_1 & \_2 & \_3 & \_\_\\
 &  &  &  & {\bf 1} & 12 & 13 & 1\_\\
 &  &  &  &  & {\bf 2} & 23 & 2\_\\
 &  &  &  &  &  & {\bf 3} & 3\_\\
 &  &  &  &  &  &  & \_\_
\end{bmatrix}
\end{equation}
These cases, while not perfectly load balanced, are good enough for our purposes.
\item As suggested by the above examples, we can technically store $\M{A}$ using only half of the processors in the grid by keeping only its upper triangle. Nevertheless, we need all $P$ processors to store/update the eigenvector matrix $\M{Q}$ (assuming it is blocked in the same way as $\M{A}$) as it is nonsymmetric in general. 
For this reason, and since it would change our subsequent complexity bounds by only a constant factor, we work with the full matrix $\M{A}$ over the entire processor grid.
\end{enumerate}

Algorithm~\ref{alg:High_Level_Parallel_Jacobi} presents 2D Parallel Jacobi -- a parallel version of Algorithm~\ref{alg: block Jacobi} that exploits these observations. Since similar algorithms have been considered in the literature before, the pseudocode here is presented at a high level and applies only a single sweep of block Jacobi to $\M{A}$; executing the full algorithm will therefore require repeatedly calling this routine and monitoring progress towards convergence. This may require some care if, for example, $\Xi(\M{A})$ is to be computed over the distributed processor grid. We provide a more detailed version of Algorithm~\ref{alg:High_Level_Parallel_Jacobi}, specifically written at the level of an individual processor, in Appendix~\ref{appendix: detailed_code}. 

\begin{algorithm}
\caption{2D Parallel Jacobi for the Symmetric Eigenvalue Problem}
\label{alg:High_Level_Parallel_Jacobi}
\textbf{Input:} (1) $\M{A} \in {\mathbb R}^{n \times n}$ a symmetric matrix. \\
\phantom{Input2 } (2) $\M{Q} \in {\mathbb R}^{n \times n}$ an orthogonal matrix of approximate eigenvectors (optional). \\
\phantom{Input2 } (3) A set $\mathfrak{P}$ of $P$ processors, assumed to be arranged in a $\sqrt{P} \times \sqrt{P}$ grid and labeled Proc$(I,J)$  \\
\phantom{Input2 (1) } for indices $1 \leq I,J \leq \sqrt{P}$. \\
\phantom{Input2 } (4) $L$ a list of $\sqrt{P}$ groups of pairwise disjoint indices $(I,J)$ (e.g.,  generated by Algorithm~\ref{alg:anti_diagonal_list}).

\vspace{1mm}
\textbf{Requires:} Proc$(I,J)$ owns the block $\Mb{A}{IJ}=\Me{A}{(I{-}1)b{+}1:Ib,(J{-}1)b{+}1:Jb}$ of $\M{A}$ for $b = n /\sqrt{P}$, which is assumed to be an integer. If omitted, $\M{Q}$ is taken to be the identity matrix; in either case, it is partitioned and stored in the same way as $\M{A}$. 

\vspace{1mm}
\textbf{Ensure:}
On output, one sweep of (block) Jacobi has been applied to $\M{A}$ and $\M{Q}$.
\algrule
\setstretch{1.1} 
\begin{algorithmic}[1]
\Function{$[\M{Q},\M{A}]=$
2D\_Parallel\_Jacobi}{$\M{A}$, $\M{Q}$, $\mathfrak{P}$, $L$}
\For{\textbf{each} $g\in L$} \Comment{Outer serial loop}
  \For{\textbf{each} $(I,J)\in g$ \textbf{in parallel}}
    \State Proc$(I,I)$: Send $\M{A}_{II}$ to Proc$(I,J)$ 
    \State Proc$(J,J)$: Send $\M{A}_{JJ}$ to Proc$(I,J)$
    \vskip 3pt
    \State $\begin{pmatrix} \Mb{A}{II} & \Mb{A}{IJ} \\
    \Mb{A}{JI} & \Mb{A}{JJ} \end{pmatrix}= \M{V}\M{D}\M{V}^T$ \label{step: diag}
    \Comment{Diagonalize the $2b \times 2b$ subproblem on Proc$(I,J)$}
    \vskip 3pt
    \State $\M{V} = \begin{pmatrix} \M{V}_{11} & \M{V}_{12} \\ \M{V}_{21} & \M{V}_{22} \end{pmatrix}; \; \; \; \M{D} = \begin{pmatrix} \M{D}_{11} & \M{D}_{12} \\ \M{D}_{21} & \M{D}_{22} \end{pmatrix}$ \Comment{Break $\M{V}$ and $\M{D}$ into $b \times b$ subblocks}
    \vskip 3pt
    \State Proc$(I,J)$:\ Send $[\Mb{V}{11};\Mb{V}{21}]$ and $\Mb{D}{11}$ to Proc$(I,I)$, $[\Mb{V}{12};\Mb{V}{22}]$ and $\Mb{D}{22}$ to Proc$(J,J)$, and  \phantom{test test test test test} $[\Mb{V}{11};\Mb{V}{21}]$, $\Mb{V}{22}$, and $\Mb{D}{21}$ to Proc($J,I$)
    \vskip 3pt
    \State $\begin{pmatrix} \M{A}_{II} & \M{A}_{IJ} \\ \M{A}_{JI} & \M{A}_{JJ} \end{pmatrix} \gets \begin{pmatrix} \M{D}_{11} & \M{D}_{12} \\ \M{D}_{21} & \M{D}_{22} \end{pmatrix}$ \Comment{Update $\M{A}([I,J],[I,J])$ on the corresponding processors}
    \vskip 3pt
    \For{$K = 1:\sqrt{P}$ \textbf{in parallel}}
        \If{$K \neq I,J$}
        \State Proc$(I,K)$ and Proc$(J,K)$: Exchange $\M{A}_{IK} \leftrightarrow \M{A}_{JK}$
        \State Proc$(K,I)$ and Proc$(K,J)$: Exchange $\M{A}_{KI} \leftrightarrow \M{A}_{KJ}$
        \EndIf
        \State Proc$(K,I)$ and Proc$(K,J)$: Exchange $\M{Q}_{KI} \leftrightarrow \M{Q}_{KJ}$
    \EndFor

    \State Proc$(I,I)$: Broadcast $[\M V_{11},\M V_{21}]$ to Proc$(I,K)$ and Proc$(K,I)$ for all $K \neq I,J$ \label{broadcast_start_2D}
    \State Proc$(J,J)$: Broadcast $[\M{V}_{22}, \M{V}_{12}]$ to Proc$(J,K)$ and Proc$(K,J)$ for all $K \neq I,J$ \label{broadcast_end}
    \For{$K = 1:\sqrt{P}$ \textbf{in parallel}} \label{step: comp_1} 
    \If{$K \neq I,J$} \Comment{Update block rows/columns of $\M{A}$}
        \vskip 0.5pt
        \State Proc$(I,K)$: $\M{A}_{IK} \gets \M{V}_{11}^{T}\cdot\M{A}_{IK}+\M{V}_{21}^{T}\cdot\M{A}_{JK}$ 
        \vskip 0.5pt
        \State Proc$(J,K)$: $\M{A}_{JK} \gets \M{V}_{12}^{T}\cdot\M{A}_{IK}+\M{V}_{22}^{T}\cdot\M{A}_{JK}$ 
        \State Proc$(K,I)$: $\M{A}_{KI} \gets \M{A}_{KI}\cdot\M{V}_{11}+\M{A}_{KJ}\cdot\M{V}_{21}$ 
        \State Proc$(K,J)$: $\M{A}_{KJ}\gets \M{A}_{KI}\cdot\M{V}_{12}+\M{A}_{KJ}\cdot\M{V}_{22}$ 
    \EndIf
    \State Proc$(K,I)$: $\M{Q}_{KI} \gets  \M{Q}_{KI}\cdot\M{V}_{11}+\M{Q}_{KJ}\cdot\M{V}_{21}$ \Comment{Update block columns of $\M{Q}$}
    \State Proc$(K,J)$: $\M{Q}_{KJ} \gets \M{Q}_{KI}\cdot\M{V}_{12}+\M{Q}_{KJ}\cdot\M{V}_{22}$ 
    \EndFor
  \EndFor
\EndFor
\EndFunction
\end{algorithmic} 
\end{algorithm}

\indent The fourth input to Algorithm~\ref{alg:High_Level_Parallel_Jacobi} is a list $L$ of $\sqrt{P}$ sets, each of which collects the indices of superdiagonal blocks that can be zeroed out in parallel. $L$ can be generated for any number of processors $P$ by calling Algorithm~\ref{alg:anti_diagonal_list}. For the above example, $L$ is as follows: \\
\begin{equation}\label{eqn: parallel_list}
    \aligned
    L=(&\{(4,5),(3,6),(2,7),(1,8)\}, \; \{(3,5),(2,6),(1,7)\}, \; \{(3,4),(2,5),(1,6),(7,8)\}, \{(2,4),(1,5),(6,8)\}, \\
    & \{(2,3),(1,4),(6,7),(5,8)\}, \; \{(1,3),(5,7),(4,8)\}, \{(1,2),(5,6),(4,7),(3,8)\}, \; \{(4,6),(3,7),(2,8)\}).
    \endaligned
\end{equation}
A sweep of block Jacobi is executed by working through this list (i.e., the outer serial loop), at each step computing and applying the corresponding block rotations in parallel. Note that we omit a check on how close each extracted subproblem is to diagonal before computing/applying these rotations. Once again, this is done to simplify the pseudocode and does not change the complexity of each sweep.

\begin{algorithm}[h]
\caption{Index List Generator for Parallel Jacobi}
\label{alg:anti_diagonal_list}
\textbf{Input:} $P$ the number of processors used, assumed to be arranged in a $\sqrt{P} \times \sqrt{P}$ grid for $\sqrt{P} \in {\mathbb N}$.

\vspace{1mm}
\textbf{Output:} A list $L$ of $\sqrt{P}$ groups of indices $(I,J)$ with $1 \leq I < J \leq \sqrt{P}$. Each group $g \in L$ consists of pairwise disjoint indices -- i.e., if $(I,J)$ and $(I',J')$ belong to $g$ then $\left\{ I,J \right\} \cap \left\{ I',J' \right\} = \emptyset$. The corresponding block ordering in parallel Jacobi is cyclic by anti-diagonals.

\algrule
\setstretch{1.1}
\begin{algorithmic}[1]
\Function{$L=$
Index\_List\_Generator}{$P$}
\State $L = [ \; \;]$
\For{$K = 0:\sqrt{P}-1$}
    \State $g = [\; \; ]$
    \For{$I = 1:\sqrt{P}-1$}
        \For{$J = I+1:\sqrt{P}$}
            \If{$I+J = K \bmod \sqrt{P}$}
                \State $g = [g; \;  (I,J)]$
            \EndIf
        \EndFor
    \EndFor
    \State $L = [L;  \;  g]$
\EndFor
\EndFunction
 \end{algorithmic} 
\end{algorithm}

\indent We call the parallel Jacobi ordering generated by Algorithm~\ref{alg:anti_diagonal_list} \textit{cyclic by anti-diagonals}. Notably, Luk and Park \cite{Luk_Park} showed that this ordering is essentially equivalent to doing block Jacobi in a row-cyclic fashion; hence, Algorithm~\ref{alg:High_Level_Parallel_Jacobi} comes with a guarantee of convergence (by way of Drmač \cite{Drmac2009}) provided a pivoting step is added after line~\ref{step: diag}. As mentioned above, pivoting requires performing QRCP or LUPP to a portion of the $2b \times 2b$ eigenvector matrix $\M{V}$. For references on how to do this in a distributed fashion, see \cite{Parallel_CALU_CAQR}.   

\indent Note that the diagonalization in line~\ref{step: diag} is left somewhat ambiguous, and in particular may be either exact or approximate. Since the pseudocode does not assume that the selected off-diagonal blocks have been fully annihilated, instead setting them equal to $\M{D}_{12}$ and $\M{D}_{21}$, Algorithm~\ref{alg:High_Level_Parallel_Jacobi} accommodates both. With this in mind, it should be noted that the convergence proof of Drmač \cite{Drmac2009} does not require exact diagonalization. \\
\indent We now apply our complexity analysis to Algorithm~\ref{alg:High_Level_Parallel_Jacobi}. Despite the more intricate pseudocode, bounds follow fairly easily from the serial blocked case~\cite{arxiv_manuscript}.

\begin{theorem}\label{thm: parallel_comp}
    One sweep of 2D parallel Jacobi (i.e., Algorithm~\ref{alg:High_Level_Parallel_Jacobi}) has complexity
    $$
        \alpha \cdot O\!\left(\sqrt{P}\log P\right) 
    + \beta \cdot O\!\left(\frac{n^2}{\sqrt{P}}\right) 
    + \gamma \cdot O\!\left(\frac{n^3}{P}\right),
    $$
    assuming $O(n^3)$ algorithms are used for matrix multiplication and diagonalization (the latter in line~\ref{step: diag}).
\end{theorem}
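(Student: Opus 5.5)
The plan is to bound the cost of one iteration of the outer serial loop of Algorithm~\ref{alg:High_Level_Parallel_Jacobi} (one group $g \in L$) along the critical path, and then multiply by $|L| = \sqrt{P}$. Within a group all pairs $(I,J)$ are pairwise disjoint, so the inner parallel loop runs concurrently. The point to check is that no processor takes on more than $O(1)$ roles at once. Each Proc$(I,K)$ lies in at most one updated block row and at most one updated block column, since the pairs in $g$ are disjoint. Hence every processor performs a constant number of $b\times b$ operations and participates in a constant number of messages and collectives per group. Throughout, $b = n/\sqrt{P}$.

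\textbf{Arithmetic.} Line~\ref{step: diag} diagonalizes a $2b\times 2b$ matrix at cost $O(b^3)$ on Proc$(I,J)$. Each block update of $\M{A}$ or $\M{Q}$ consists of two $b\times b$ matrix products, also $O(b^3)$. A given processor updates its $\M{A}$ block at most twice (once from the left, once from the right) and its $\M{Q}$ block once. So the per-group flop count is $O(b^3) = O(n^3/P^{3/2})$, and over $\sqrt{P}$ groups this gives $O(n^3/P)$.

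\textbf{Bandwidth.} The point-to-point sends of diagonal blocks and of $\M{V},\M{D}$ pieces, and the pairwise exchanges of $\M{A}_{IK}\leftrightarrow\M{A}_{JK}$ and $\M{Q}_{KI}\leftrightarrow\M{Q}_{KJ}$, each move $O(b^2)$ words per processor with $O(1)$ messages.

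\textbf{Latency.} The broadcasts in lines~\ref{broadcast_start_2D}--\ref{broadcast_end} are the source of the logarithm. Proc$(I,I)$ broadcasts $O(b^2)$ words to the $O(\sqrt{P})$ processors in its grid row and column. By Table~\ref{tab:mpi_routines} this costs $\alpha\cdot O(\log\sqrt{P}) + \beta\cdot O(b^2)$, and the assumption $n\ge P$ there holds since the message has $b^2 = n^2/P \ge \sqrt{P}$ words when $n \ge P^{3/4}$; otherwise we pad the message, which changes nothing asymptotically.

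The step requiring care is contention: broadcasts for distinct pairs in $g$ run simultaneously over overlapping processor sets. For example, Proc$(I,K)$ receives from both Proc$(I,I)$ (row $I$) and Proc$(K,K)$ if $K$ belongs to another pair. I would argue that each processor belongs to at most two broadcast trees per group, one row tree and one column tree. Under the model's no-contention assumption these then add only a constant factor. Per group the total is therefore $\alpha\cdot O(\log P) + \beta\cdot O(n^2/P) + \gamma\cdot O(n^3/P^{3/2})$. Multiplying by the $\sqrt{P}$ sequential groups yields $\alpha\cdot O(\sqrt{P}\log P) + \beta\cdot O(n^2/\sqrt{P}) + \gamma\cdot O(n^3/P)$, which is the bound claimed in Theorem~\ref{thm: parallel_comp}.
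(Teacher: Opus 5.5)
Your proof is correct and follows the paper's argument essentially verbatim: per group of $L$ you bound the cost by $O(\log P)$ messages (from the row/column broadcasts), $O(b^2)$ words, and $O(b^3)$ flops, and then multiply by the $\sqrt{P}$ groups; your explicit check that each processor lies in at most one row tree and one column tree per group is a detail the paper leaves implicit in ``each processor performs $O(1)$ such operations.'' The one slip is the padding remark: padding a $b^2$-word broadcast up to $\sqrt{P}$ words costs $\beta\cdot O(\sqrt{P})$, which exceeds $\beta\cdot O(b^2)$ exactly when $n<P^{3/4}$ and would inflate the total bandwidth to $O(P)$, so you should either assume $n^2/P\gtrsim\sqrt{P}$, as the paper tacitly does when it invokes Table~\ref{tab:mpi_routines}, or use a segmented (pipelined) tree broadcast, which still gives $\alpha\cdot O(\log P)+\beta\cdot O(b^2)$ whenever $b^2\gtrsim\log P$.
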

\begin{proof}
We consider bandwidth, latency, and arithmetic individually. 
\begin{itemize}

    \item Bandwidth:\ Each communication operation performed by a processor consists of either (1) sending/receiving at most $4b^2=4n^2/P$ words to/from an individual processor, or (2) broadcasting $2b^2$ words to $\sqrt{P}-2$ processors. Since each processor performs $O(1)$ such operations, and multiplying by the $O(\sqrt{P})$ number of steps in one sweep (corresponding to the outer loop of the algorithm), the total bandwidth cost is therefore
    \begin{equation}
        W(n,P)=O(\sqrt{P}\cdot b^2)=O(n^2/\sqrt{P}).
    \end{equation}

    \item Latency:\ Recalling Table~\ref{tab:mpi_routines}, each broadcast to $O(\sqrt{P})$ processors requires $O(\log P)$ messages, while each ``point-to-point'' send/receive requires just $O(1)$ message. Thus, the number of  messages sent per iteration is $O(\log P)$. Since, again, the outer loop of Algorithm~\ref{alg:High_Level_Parallel_Jacobi} executes $O(\sqrt{P})$ times, the total number of messages sent is
    \begin{equation}
        S(n,P)=O(\sqrt{P}\log P).
    \end{equation}
 
    \item Arithmetic:\ Computation is performed in line~\ref{step: diag}, which solves a $2b\times 2b$ symmetric eigenvalue problem, and in the for loop beginning in line~\ref{step: comp_1}, where each participating processor performs up to four $b \times b$ matrix multiplications. Assuming both diagonalization and matrix multiplication incur cubic cost, the per-iteration flop count is $O(b^3)$. Again multiplying by $O(\sqrt{P})$ for the number of iterations yields
    \begin{equation}
        F(n,P)=O(\sqrt{P}\,b^3)=O(n^3/P)
    \end{equation}

\end{itemize}
\indent Summing the above terms gives the desired bound.
\end{proof} \\

We note two key takeaways from this result. First, Algorithm~\ref{alg:High_Level_Parallel_Jacobi} achieves a (perfect) linear speedup in arithmetic cost. Moreover, under the minimum-memory assumption of $\Theta(n^2/P)$ words per processor, the bandwidth cost of Algorithm~\ref{alg:High_Level_Parallel_Jacobi} attains the asymptotic lower bound for matrix multiplication, while the latency cost matches the lower bound up to a factor of $\log P$. In this sense, 2D Parallel Jacobi is nearly optimal. 

\section{2.5D Parallel Jacobi}\label{section: 2.5D_parallel}

The bandwidth and latency costs of 2D parallel matrix multiplication are only optimal in a minimum-memory setting -- i.e., when each processor has fast memory of size $\Theta(n^2/P)$ and only a single copy of the input/output matrices can be stored across the grid. If instead each processor has access to fast memory of size $\Omega(cn^2/P)$ for a replication factor $1 \leq c \leq P^{1/3}$, so that the grid can store $c$ copies of the input matrices, then the 2.5D algorithm of Solomonik and Demmel \cite{SD11,KK+19} 
can be used, which beats the latency and bandwidth costs of the standard 2D algorithm by a factor of $c^{3/2}$ and $\sqrt{c}$, respectively, while maintaining a perfect speedup in arithmetic. As its name suggests, 2.5D matrix multiplication employs a 2.5D processor layout, in which $P$ processors are arranged in a $\sqrt{P/c} \times \sqrt{P/c} \times c$ grid (recall Section~\ref{section: processor_layouts}). Put simply, 2.5D matrix multiplication attains lower communication complexity at the (modest) cost of additional memory. \\
\indent In this section, we take up the question of whether the same can be said of parallel Jacobi. In short, the answer is yes:\ we derive a 2.5D version of the algorithm that can beat both the bandwidth and latency costs of 2D parallel Jacobi -- though, unlike 2.5D matrix multiplication, it cannot beat both simultaneously (see Section~\ref{section: lower_bound}).  \\
\indent Throughout this section, we denote the processors in our 2.5D grid by Proc$\langle p,q,r\rangle$, where
\begin{equation}
  p,q\in\{1,\dots,\sqrt{P/c}\} \quad \text{and} \quad
  r\in\{1,\dots,c\}.
\end{equation}
These labels can be interpreted as follows:\  Proc$\langle p,q,r \rangle$ belongs to row $p$ and column $q$ in the 2D grid on layer $r$. As a convention, $r=1$ corresponds to the \emph{top} layer of the grid.

\subsection{2.5D Matrix Multiplication}
We begin by giving a detailed overview of 2.5D matrix multiplication. In essence, this algorithm is a straightforward generalization of Cannon's 2D matrix multiplication \cite{Cannon69}.
In Cannon's algorithm, input matrices $\M{A}$ and $\M{B}$ are block-distributed over a $\sqrt{P} \times \sqrt{P}$ processor grid and each block of the product $\M{C} = \M{A}\M{B}$ is computed by a single processor; that is, in the notation of the previous section, Proc$(I,J)$ computes $\M{C}_{IJ}$. This is done by breaking the multiplication into $n/\sqrt{P}$ steps, each of which consists of the following:
\begin{enumerate}
    \item First, blocks of $\M{A}$ and $\M{B}$ are cyclically shuffled across the grid so that Proc$(I,J)$ has both $\M{A}_{IK}$ and $\M{B}_{KJ}$ in its fast memory for some $1 \leq K \leq n/\sqrt{P}$. The value of $K$ is different for each step according to a processor-dependent ordering. 
    \item Next, the processors simultaneously perform a local matrix multiplication of size $O(n^2/P)$ and update the block of $\M{C}$ they own. That is, Proc$(I,J)$ executes $\M{C}_{IJ} \leftarrow \M{C}_{IJ} + \M{A}_{IK} \M{B}_{KJ}$. 
\end{enumerate}
The motivation here is simple:\ to compute $\M{C}_{IJ}$, processor Proc$(I,J)$ must see everything in block row $I$ of $\M{A}$ and block column $J$ or $\M{B}$ but -- assuming the same minimum-memory setting as above -- can only fit one pair $\M{A}_{IK}$ and $\M{B}_{KJ}$ in fast memory at a time (at which point the other blocks needed to compute $\M{C}_{IJ}$ can be used by other processors). \\
\indent 2.5D matrix multiplication further parallelizes this algorithm by partitioning the $n/\sqrt{P}$ steps into $c$ groups, each of which is executed on a separate layer of the processor grid. The high-level approach is as follows:
\begin{enumerate}
    \item Initially, $\M{A}$ and $\M{B}$ are block-distributed across the top layer of the grid. The algorithm begins by broadcasting $\M{A}$ and $\M{B}$ (in blocks) across the other layers. Note that the additional memory on each process ensures that $\M{A}$ and $\M{B}$ can be stored on each layer. 
    \item After initially cycling subblocks if necessary, 
    each layer simultaneously performs $1/c$ of the 2D algorithm over its local $\sqrt{P/c} \times \sqrt{P/c}$ grid.
    \item Finally, a reduction across the layers yields the output $\M{C}$ on the top of the grid. 
\end{enumerate}
\noindent Pseudocode for this procedure is presented in Algorithm~\ref{alg:2.5DMM}. 
\\
\indent Complexity bounds for 2.5D matrix multiplication are summarized below. Note that, as mentioned above, both the bandwidth and latency costs of 2.5D matrix multiplication improve on the 2D version.

\begin{theorem}[Solomonik and Demmel \cite{SD11}] \label{thm: 2.5d_MM}
The complexity of 2.5D parallel matrix multiplication (i.e., Algorithm~\ref{alg:2.5DMM}) is 
$$\alpha\cdot O\left(\sqrt{P/c^3} + \log c\right) + \beta\cdot O\left(\frac{n^2}{\sqrt{cP}}\right) + \gamma\cdot O\left(\frac{n^3}{P}\right).$$
for $n$ the size of the (square) input matrices multiplied, $P$ the number of processors used, and $c$ a replication factor satisfying $1 \leq c \leq P^{1/3}$.
\end{theorem}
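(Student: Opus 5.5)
The plan is to analyze Algorithm~\ref{alg:2.5DMM} phase by phase, using the three-step structure described above: replication across layers, a partial Cannon's algorithm on each layer, and a final reduction. Throughout, each processor owns blocks of size $b \times b$ with $b = n/\sqrt{P/c} = n\sqrt{c/P}$, so each block holds $b^2 = cn^2/P$ words. The costs from Table~\ref{tab:mpi_routines} are applied with the relevant group size. For the collectives between layers, this group size is $c$.

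\textbf{Replication.} Each processor $\langle p,q,1\rangle$ broadcasts its blocks of $\M{A}$ and $\M{B}$ along the fiber $\{\langle p,q,r\rangle\}_{r=1}^c$. Table~\ref{tab:mpi_routines} gives this a cost of $\alpha\cdot O(\log c)+\beta\cdot O(b^2)$. Since $b^2 = cn^2/P = n^2/\sqrt{cP}\cdot\sqrt{c^3/P}$ and $c\le P^{1/3}$, we get $b^2 \le n^2/\sqrt{cP}$. So this phase stays within the claimed bandwidth, and its latency accounts for the $\log c$ term.

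\textbf{Per-layer Cannon.} Layer $r$ must handle $1/c$ of the $\sqrt{P/c}$ Cannon steps, so it performs $\sqrt{P/c}/c = \sqrt{P/c^3}$ steps.
\begin{itemize}
\item Before starting, each layer does one initial skew so that it begins at its own offset $K$. With point-to-point sends to the target processor this costs $O(1)$ messages and $O(b^2)$ words.
\item Each step is one cyclic shift of $\M{A}$ blocks along grid rows and one of $\M{B}$ blocks along grid columns. That is $O(1)$ messages of $b^2$ words each.
\item Each step also does one local $b\times b$ multiply-add, costing $O(b^3)$ flops.
\end{itemize}
Summing over the steps gives:
\begin{itemize}
\item latency $O(\sqrt{P/c^3})$;
\item bandwidth $O(\sqrt{P/c^3}\cdot cn^2/P) = O(n^2/\sqrt{cP})$;
\item arithmetic $O(\sqrt{P/c^3}\cdot (n\sqrt{c/P})^3) = O(n^3/P)$.
\end{itemize}

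\textbf{Reduction.} The partial sums of $\M{C}$ are reduced along each fiber back to the top layer. By Table~\ref{tab:mpi_routines} this costs $\alpha\cdot O(\log c)+\beta\cdot O(b^2)+\gamma\cdot O(b^2)$. The bandwidth term is bounded exactly as in the replication phase. The flop term $cn^2/P$ is dominated by $n^3/P$.

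\textbf{Main obstacle.} The only delicate points are bookkeeping. First, the initial skew must be carried out in $O(1)$ messages rather than $O(\sqrt{P/c})$ shifts. Second, the cost bounds in Table~\ref{tab:mpi_routines} assume the message length is at least the group size, which here means $b^2\ge c$; this is harmless for $n$ large. Third, the $O(b^2)$ terms from replication and reduction must be absorbed into $n^2/\sqrt{cP}$, and this is precisely where $c\le P^{1/3}$ is used. Adding the three phases then yields the stated bound.
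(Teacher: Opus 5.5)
Your proposal is correct and follows the three-phase structure the paper uses to describe Algorithm~\ref{alg:2.5DMM}: replicate along the $c$-fibers, run $\sqrt{P/c^3}$ Cannon steps per layer after an $O(1)$-message skew, then reduce; the paper itself gives no proof and simply cites Solomonik and Demmel, whose standard analysis is the one you reproduce. Your arithmetic checks out, including the key use of $c \le P^{1/3}$ to absorb the $O(b^2) = O(cn^2/P)$ collective costs into $O(n^2/\sqrt{cP})$.
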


\begin{algorithm}[t]
\caption{2.5D Matrix Multiplication}
\label{alg:2.5DMM}
\textbf{Input:} $\M{A},\M{B} \in {\mathbb R}^{n \times n}$ and a set $\mathfrak{P}$ of $P$ processors, assumed to be arranged in a $\sqrt{P/c} \times \sqrt{P/c} \times c$ processor grid for replication factor $1 \leq c \leq P^{1/3}$ and labeled Proc$\langle p,q,r \rangle$ for indices $1\leq p,q \leq \sqrt{P/c}$ and $1 \leq r \leq c$.

\vspace{1mm}
\textbf{Requires:} Initially, $\M{A}$ and $\M{B}$ are stored on the top layer of the grid, with Proc$\langle i,j,1 \rangle$ owning both 
\[ \Mb{A}{ij}=\Me{A}{(i{-}1)b{+}1:ib,(j{-}1)b{+}1:jb} \; \; \; \; \text{and} \; \;  \; \; \Mb{B}{ij}=\Me{B}{(i{-}1)b{+}1:ib,(j{-}1)b{+}1:jb} \]
for $b = n/\sqrt{P/c}$. In addition, each processor allocates three $b \times b$ arrays $\M{A}_{\text{local}}$, $\M{B}_{\text{local}}$, and $\M{C}_{\text{local}}$ to be updated throughout the computation.

\vspace{1mm}
\textbf{Output:} $\M{C} = \M{A}\M{B}$, stored on the top layer of the grid.

\algrule
\setstretch{1.1}
\begin{algorithmic}[1]
\Function{$\M{C}=$
2.5D\_MM}{$\M{A}$, $\M{B}$, $\mathfrak{P}$}
\ForAll{$i,j \in [\sqrt{P/c}]$ \textbf{and} $k \in \left[c \right]$ \textbf{in parallel}} \Comment{All actions are executed by Proc$\langle i,j,k \rangle$}
\If{$k = 1$}
\State Broadcast $\M{A}_{ij}$ and $\M{B}_{ij}$ to Proc$\langle i,j,l \rangle$ for $2 \leq l \leq c $ \Comment{Replicate $\M{A}$ and $\M{B}$}
\EndIf
\State $s = \left( j-i+(k-1)\sqrt{P/c^3} \mod \sqrt{P/c} \right) +1$
\State Send $\M{A}_{ij}$ to $\M{A}_{\text{local}}$ on Proc$\langle i,s,k\rangle$
\State $s' = \left( i-j+(k-1)\sqrt{P/c^3} \mod \sqrt{P/c}\right) + 1$
\State Send $\M{B}_{ij}$ to $\M{B}_{\text{local}}$ on  Proc$\langle s',j,k \rangle$
\State $\M{C}_{\text{local}} \gets \M{A}_{\text{local}} \M{B}_{\text{local}}$
\State $s = \left(j\mod \sqrt{P/c}\right)+1$ 
\State $s' = \left( i \mod \sqrt{P/c}\right)+1$
\For{$t = 1:\sqrt{P/c^3}-1$} \Comment{Cyclically update local arrays}
\State Send $\M{A}_{\text{local}}$ to Proc$\langle i,s,k \rangle$ and $\M{B}_{\text{local}}$ to Proc$\langle s',j,k \rangle$
\State $\M{C}_{\text{local}} \gets \M{C}_{\text{local}} + \M{A}_{\text{local}} \M{B}_{\text{local}}$
\EndFor
\State Contribute $\M{C}_{\text{local}}$ to a sum-reduction to Proc$\langle i,j,1 \rangle$.
\EndFor
\EndFunction
 \end{algorithmic} 
\end{algorithm}

\subsection{Extension to Jacobi}
We now consider parallelizing block Jacobi on a 2.5D processor grid. As in 2.5D matrix multiplication, we assume that the input matrix $\M{A}$ and the eigenvector matrix $\M{Q}$ are initially distributed across the top layer of the grid and can be replicated across the others as needed. At a high level, 2.5D parallel Jacobi runs the 2D algorithm on the top layer of the grid while distributing work -- e.g., subproblem diagonalization and block row/column updates -- across the other layers to reduce communication. \\
\indent It is easy to see why this would be beneficial for the block row/column updates. If $B$ is the block size, each such update corresponds to the product of a $2B \times 2B$ orthogonal matrix $\M{V}$ with either a $2B \times n$  or $n \times 2B$ submatrix of $\M{A}$. In 2D parallel Jacobi, these are handled by broadcasting blocks of $\M{V}$ to the relevant processors and doing local matrix multiplications (hence, incurring a bandwidth cost of $O(B^2) = O(n^2/P)$ at each parallel step). If these multiplications are instead handled by the 2.5D algorithm -- e.g., with independent groups of $P'$ processors, each  arranged in a 2.5D grid with $c$ layers -- the  bandwidth cost is only $O(B^2/\sqrt{c P'})$ at each step (applying Theorem~\ref{thm: 2.5d_MM}). \\
\indent There is a catch here:\ if $P'$ processors are to collaborate on each $B \times B$ matrix multiplication via the 2.5D algorithm, and assuming all memory is used, then the block size $B$ must be larger than the fast memory of any individual processor. 
This is in contrast to the 2D algorithm from the previous section, where the block size was exactly the size of each processor's fast memory and each such multiplication was done locally. To avoid confusion, we will use the following notation in this section:
\begin{itemize}
    \item $b = n/\sqrt{P/c}$ is the size of the block of $\M{A}$ (and $\M{Q}$) owned by each processor. This matches how $b$ was defined in Section~\ref{section: 2D_parallel}.
    \item $B$ is the block size used in Jacobi, where $B = \mu b$ for $1 \leq \mu \leq \sqrt{P/c}$ a batching parameter. For simplicity, we assume that $\mu$, $b$, and $B$ are all integers. 
\end{itemize}
These block sizes naturally divide $\M{A}$ into \textit{fine tiles} of size $b \times b$ and  \textit{coarse tiles} of size $B \times B$. On each layer of the processor grid, fine tiles  are owned by individual processors while coarse tiles are owned collectively by $\mu^2$ processors. The aforementioned $B \times B$ matrix multiplications can therefore be done over a $\mu \times \mu \times c$ subset of the 2.5D processor grid. This implies a lower bound on the batching parameter $\mu$; to use the 2.5D algorithm in this context, we need $c \leq \mu$. \\
\indent Since Jacobi will operate on coarse tiles of $\M{A}$, we follow Section~\ref{section: 2D_parallel} and label them with pairs $(I,J)$ for $1 \leq I,J \leq n/B$, where the corresponding submatrix $\M{A}_{IJ}$ is defined as in \eqref{eqn: A_subblock} for block size $B$. Since the division of $\M{A}$ into coarse and fine tiles naturally extends to the processor grid, we use similar notation to refer to groups of processors that operate collectively at the coarse-tile level. In particular, for $1 \leq I,J \leq n/B, 1 \leq k \leq c$, the set
\begin{equation}\label{eqn: processor_tile}
\text{ProcTile}(I,J,k) := \{\, \text{Proc}\langle p,q,k\rangle \;|\; (I{-}1)\mu{+}1\leq p \leq I\mu,\ (J{-}1)\mu{+}1 \leq q \leq J\mu\,\}
\end{equation}
denotes the processors that make up the coarse \textit{processor tile} handling $\M{A}_{IJ}$ on layer $k$.  Naturally, each such tile consists of $\mu \times \mu$ individual processors (which are the processor-analog of the fine tiles from above). For clarity, Figure~\ref{fig: processor_grid_topology} illustrates the 2.5D processor grid and its division into both 2D layers and coarse/fine (processor) tiles on each layer. To make it easier to refer to ``slabs" of processors across multiple coarse tiles or layers, we use ordered lists (e.g., standard Matlab notation) to select multiple coarse tiles. For example, the sets 
\begin{equation}\label{eqn: proc_slab}
    \aligned 
    \text{ProcSlab}([I,J],[I,J],1) &= \bigcup_{\mathcal{P}, \mathcal{Q} \in \{I,J\}} \text{ProcTile}(\mathcal{P}, \mathcal{Q}, 1) \\
    \text{ProcSlab}(:,[I,J],:) &=  
    \bigcup_{\substack{\text{all } {\mathcal P}, r \\ Q \in \left\{ I,J \right\}}} \text{ProcTile}({\mathcal P},{\mathcal Q,r)}
    \endaligned 
\end{equation}
represent, respectively, the $2 \times 2$ block of coarse processor tiles on the top layer that owns the submatrix $\M A([I,J], [I,J])$ and the slab consisting of all processors with coarse column index $I$ or $J$. 

\begin{figure}[t]
    \centering
    \includegraphics[width=\linewidth]{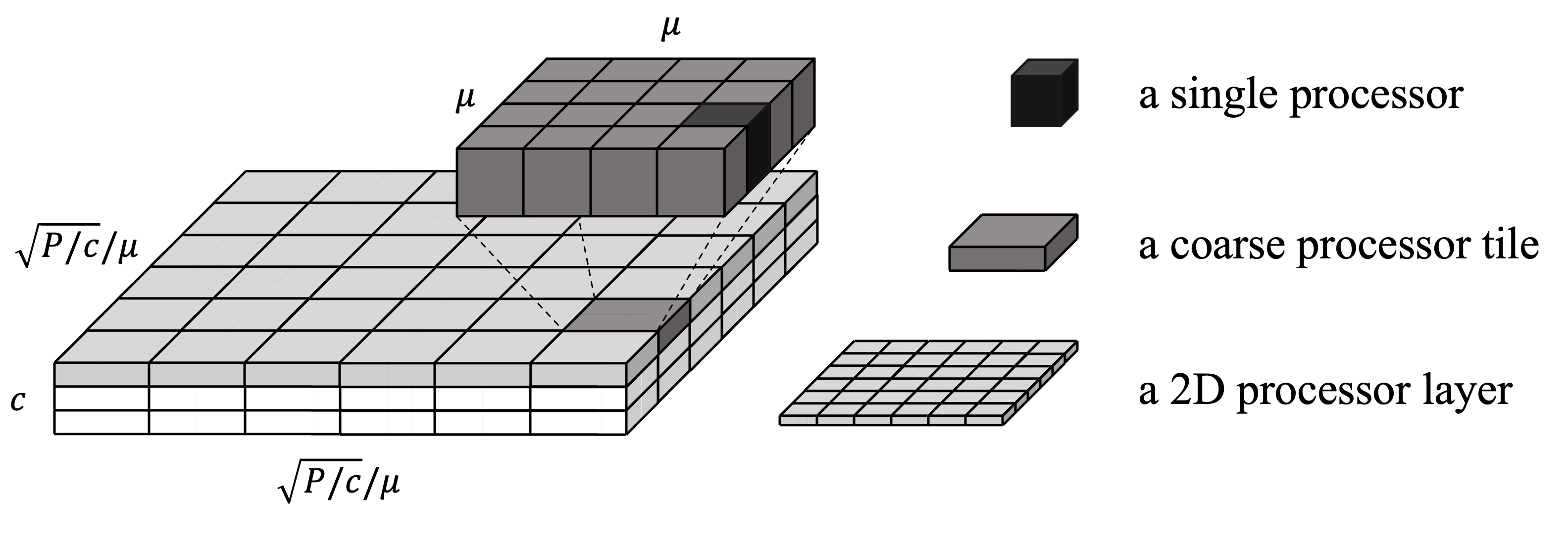}
    \caption{The 2.5D processor grid and its breakdown into 2D processor layers and coarse processor tiles. 
    }
    \label{fig: processor_grid_topology}
\end{figure}

\indent There is one step of Jacobi's method that we have not yet addressed:\ how should we diagonalize each $2B \times 2B$ subproblem $\hat{\M{A}}= \M{A}([I,J],[I,J])$? Since in the 2.5D setting this submatrix is initially block-distributed across the $2\mu \times 2 \mu$ processors in  ProcSlab$([I,J],[I,J],1)$, a natural option is the 2D parallel Jacobi algorithm from Section~\ref{section: 2D_parallel}, which can operate on $b\times b$ blocks of $\hat{\M{A}}$ (equivalently, fine tiles of $\M{A}$) over the 2D subset ProcSlab$([I,J],[I,J],1)$ of the full 2.5D processor grid. Recalling that the bandwidth and arithmetic costs of 2D parallel Jacobi decrease as the number of processors increases (see Theorem~\ref{thm: parallel_comp}), we might hope to use more than just the processors that initially own $\hat{\M{A}}$ here. Fortunately, we have more available; if the same anti-diagonal ordering from Section~\ref{section: 2D_parallel} is used, then the subproblems being diagonalized simultaneously in any step of the algorithm have disjoint (coarse-tile) column indices. Hence, the full processor slab ProcSlab$(:,[I,J],:)$ is free to use to diagonalize $\hat{\M{A}}$. This prompts us to introduce one final parameter -- a utilization factor $\eta \in [2\mu/\sqrt{cP},\,1]$, which determines the fraction of processors in ProcSlab$(:,[I,J],:)$ to be used for each subproblem diagonalization. The lower bound $\eta = 2\mu/\sqrt{cP}$ corresponds to only using the $4\mu^2$ processors in ProcSlab$([I,J],[I,J],1)$

\subsection{Communication Operations Between Coarse Processor Tiles}

\begin{figure}[t]
\centering

\begin{subfigure}{\textwidth}
    \centering
    \includegraphics[width=.4\linewidth]{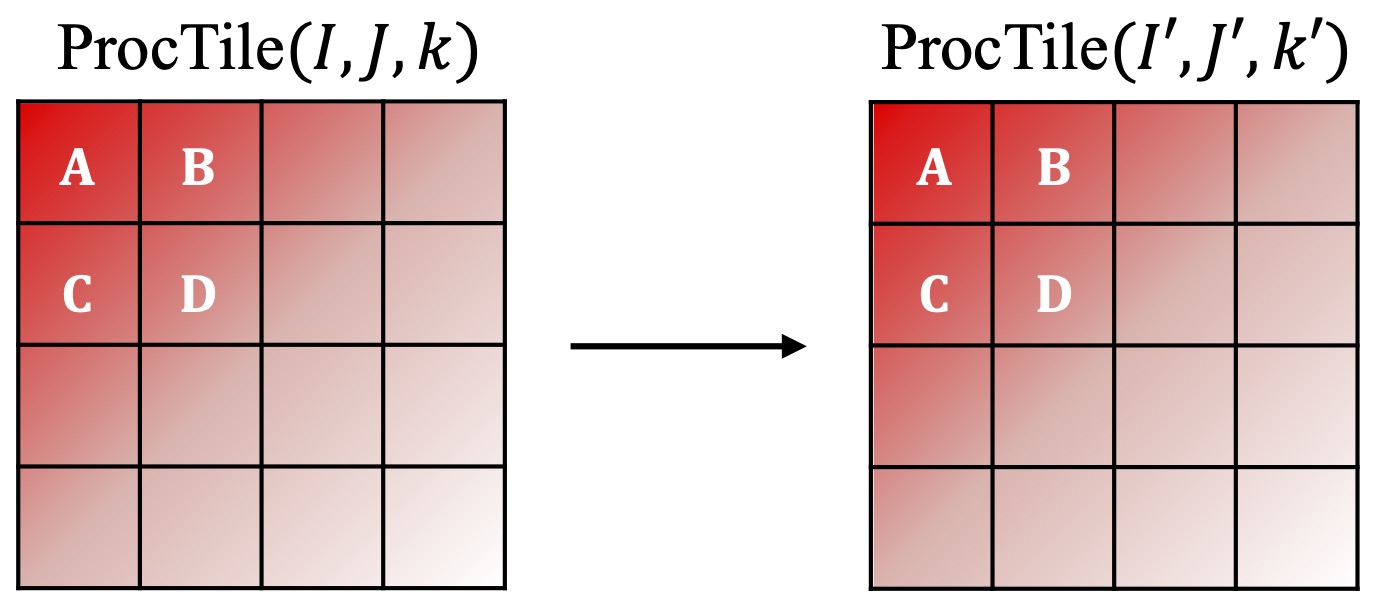}
    \caption{\centering Send}
\end{subfigure}
\begin{subfigure}{.43\textwidth}
    \centering
    \hspace{1cm}
    \includegraphics[width=\linewidth]{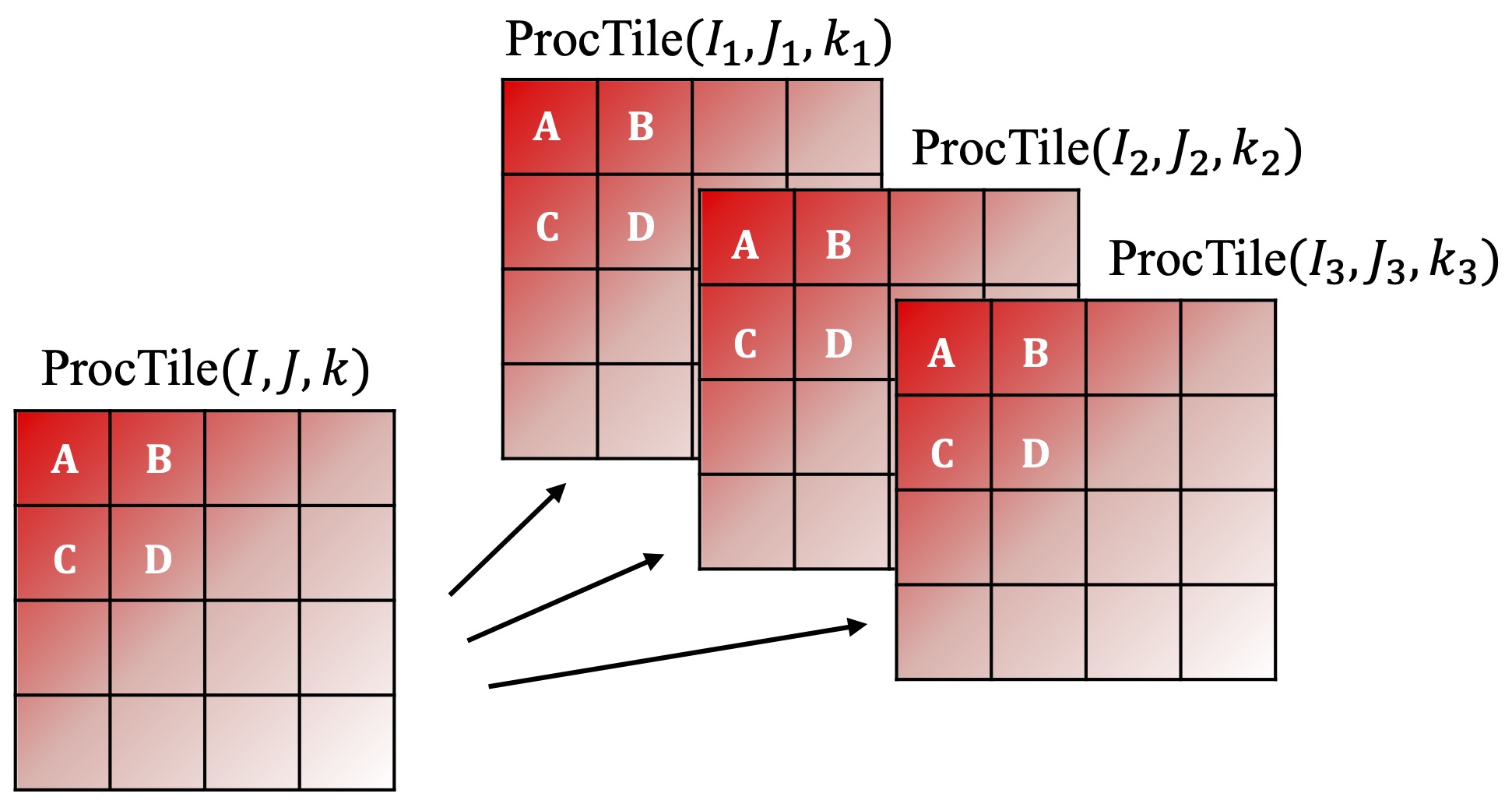}
    \caption{Broadcast}
\end{subfigure}\hfill
\begin{subfigure}{.5\textwidth}
    \centering
    \includegraphics[width=\linewidth]{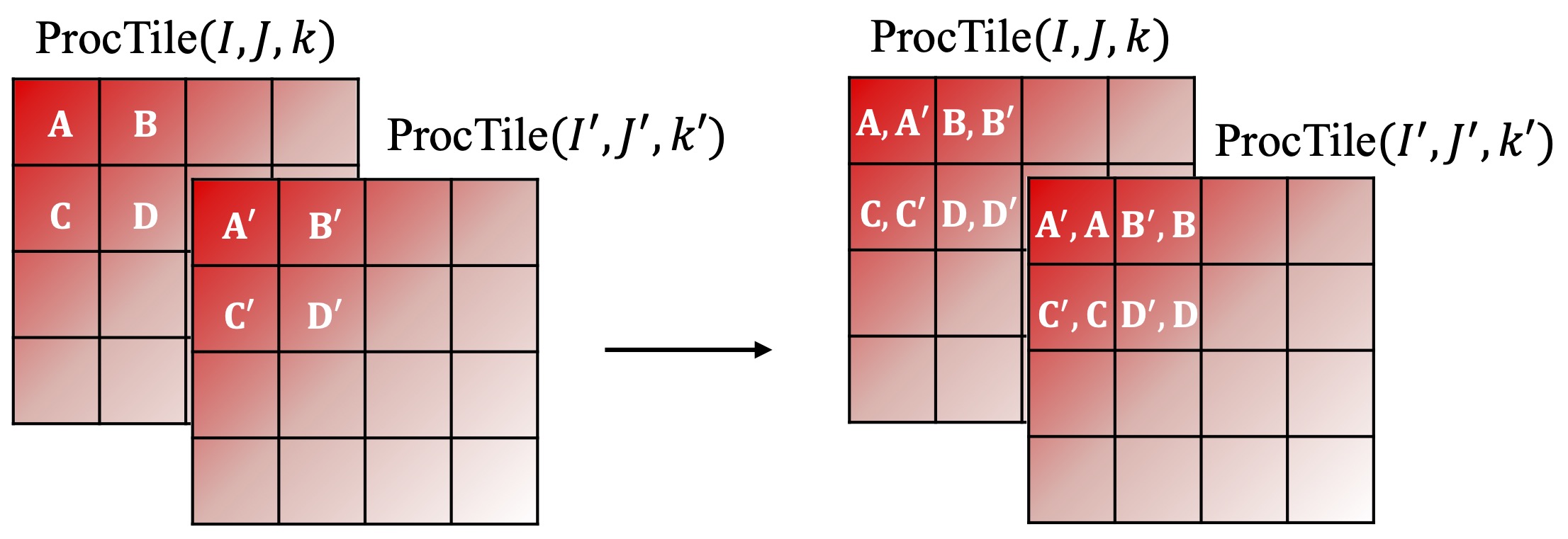}
    \caption{Exchange}
\end{subfigure}

\caption{Communication operations between coarse processor tiles in 2.5D parallel Jacobi. Each grid square represents an individual processor.}
\label{fig:tile_comm}

\end{figure}

In the next section, we present pseudocode for 2.5D parallel Jacobi written at the level of $B \times B$ coarse tiles of $\M{A}$. Accordingly, it includes communication instructions for coarse processor tiles. Each of these operations (e.g., send, broadcast, exchange) is executed in the natural way; that is, communication is performed between individual processors occupying the same location in each tile. This is depicted in Figure~\ref{fig:tile_comm}. \\
\indent There is one communication operation not covered here. As mentioned above, each $2B \times 2B$ subproblem $\hat{\M{A}}$ will be diagonalized via 2D parallel Jacobi. This requires scattering $\hat{\M{A}}$, which is initially owned by ProcSlab$([I,J],[I,J],1)$, to some subset of the processors in ProcSlab$(:,[I,J],:)$, and subsequently gathering the diagonalization back to the top layer of the grid when 2D parallel Jacobi finishes. The picture to have in mind here is Figure~\ref{fig: scatter_and_gather_for_2.5D}, which shows this procedure for two choices of $\eta$. The details of this communication operation are nontrivial; in particular, the matrix $\hat{\M{A}}$ will need to be repartitioned to match the assumptions of Algorithm~\ref{alg:High_Level_Parallel_Jacobi}, where the block size will be dependent on the choice of $\eta$. The processors participating in this computation will similarly need to be re-indexed (again, to match the assumption that the processor grid is 2D and square). For simplicity, and since it won't be necessary to bound the complexity of these operations, we do not specify the fine details of this communication.

\begin{figure}[t]
    \centering
    \includegraphics[width=\linewidth]{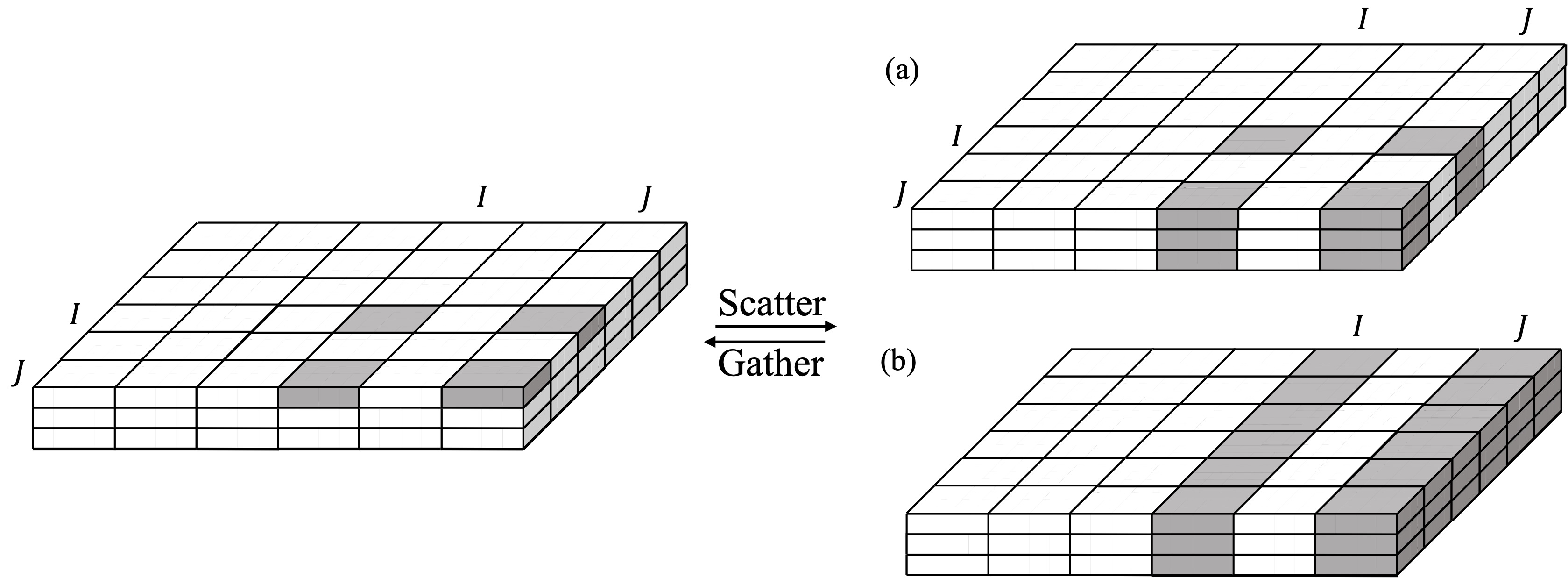}
    \caption{Illustration of the scatter and gather procedures for the coarse subproblem $\M{A}([I,J],[I,J])$. The subproblem, initially distributed over the shaded region on the left panel -- i.e., ProcSlab$([I,J],[I,J],1)$ -- is scattered to the shaded region on the right according to the utilization factor~$\eta$. Two representative cases are shown: (a) $\eta = 2\mu / \sqrt{P/c}$, where only processors within the coarse tile across all $c$ layers are used, that is, ProcSlab$([I,J],[I,J],:)$, and (b) $\eta = 1$, where all processors in the coarse columns $I$ and $J$ across all layers are utilized, i.e., ProcSlab$(:,[I,J],:)$.}
    \label{fig: scatter_and_gather_for_2.5D}
\end{figure}

\subsection{Latency and Bandwidth Optimality}

\begin{algorithm}
\caption{2.5D Parallel Jacobi for the Symmetric Eigenvalue Problem}
\label{alg:2.5D_Parallel_Jacobi}
\textbf{Input:} (1) $\M{A} \in {\mathbb R}^{n \times n}$ a symmetric matrix. \\
\phantom{Input2 } (2) $\M{Q} \in {\mathbb R}^{n \times n}$ an orthogonal matrix of approximate eigenvectors (optional). \\
\phantom{Input2 } (3) A set $\mathfrak{P}$ of $P$ processors, assumed to be arranged in a $\sqrt{P/c} \times \sqrt{P/c} \times c$ grid for replication \\ 
\phantom {Input2 (1) } factor $1 \leq c \leq P^{1/3}$ and labeled Proc$\langle p,q,r\rangle$ for $1 \leq p,q\leq \sqrt{P/c}$ and $1 \leq r \leq c$. \\
\phantom{Input2 } (4) $\mu$ a batching parameter satisfying $c \leq \mu \leq \sqrt{P/c}$. \\
\phantom{Input2 } (5) $\eta \in [2\mu/\sqrt{cP},1]$ a utilization factor. \\
\phantom{Input2 } (6) Lists $L$ and $\hat{L}$ of, respectively, $\sqrt{P/c}/\mu$ and $\sqrt{2 \eta \mu \sqrt{cP}}$ groups of pairwise disjoint indices $(I,J)$  \\
\phantom{Input2 (6) } (e.g.,  generated by two calls to Algorithm~\ref{alg:anti_diagonal_list}).

\vspace{2mm}
\textbf{Requires:} Initially, $\M{A}$ is stored on the top layer of the grid, with the coarse tile 
\[ \Mb{A}{IJ}=\Me{A}{(I{-}1)B{+}1:IB,(J{-}1)B{+}1:JB} \; \; \; \text{for} \; \; \; B = \mu n/\sqrt{P/c}, \]
block-distributed over the coarse processor tile ProcTile$(I,J,1)$ for $ 1\leq I,J \leq \sqrt{P/c}/\mu$, 
where each individual processor owns a $b \times b$ fine tile of $\M{A}$ for $b = n/\sqrt{P/c}$. If omitted, $\M{Q}$ is taken to be the identity matrix; in either case, it is partitioned and stored in the same way as $\M{A}$. 

\vspace{2mm}
\textbf{Ensure:}
On output, one sweep of (block) Jacobi has been applied to $\M{A}$ and $\M{Q}$.
\algrule
\setstretch{1.1}
\begin{algorithmic}[1]
\Function{$[\M{Q},\M{A}]=$
2.5D\_Parallel\_Jacobi}{$\M{A}$, $\M{Q}$, $\mathfrak{P}$, $\mu$, $\eta$, $L$, $\hat{L}$}
\For{\textbf{each group} $g\in L$}  \Comment{Outer serial loop}
  \For{\textbf{each}  $(I,J)\in g$ \textbf{in parallel}}
    \vspace{1mm}
    \State $\hat{\M{A}} = \begin{pmatrix} \M{A}_{II} & \M{A}_{IJ} \\ \M{A}_{JI} & \M{A}_{JJ} \end{pmatrix} $ \Comment{Coarse subproblem to be diagonalized}
    \vspace{1mm}
    \State  ProcSlab$([I,J],[I,J],1)$: Scatter $\hat{\M{A}}$ to $\hat{\mathfrak{P}}$, an $\eta$ fraction of processors in ProcSlab$(:,[I,J],:)$
    \State Initialize $\hat{\M{V}}$ as the $2B \times 2B$ identity matrix over $\hat{\mathfrak{P}}$
    \Repeat
        \State $[\hat{\M{V}}, \hat{\M{A}}]$ = \Call{2D\_Parallel\_Jacobi}{$\hat{\M{A}}$, $\hat{\M{V}}$, $\hat{\mathfrak{P}}$, $\hat{L}$} \label{line: 2D_call}
    \Until{converged or a maximum number of sweeps is reached}
    \State Gather $\hat{\M{V}}, \hat{\M{A}}$ from $\hat{\mathfrak{P}}$ back to ProcSlab$([I,J],[I,J],1)$
    \vspace{1mm}
    \State $\hat{\M{V}} = \begin{pmatrix} \hat{\M{V}}_{11} & \hat{\M{V}}_{12} \\ \hat{\M{V}}_{21} & \hat{\M{V}}_{22} \end{pmatrix}; \; \; \; \hat{\M{A}} = \begin{pmatrix} \hat{\M{A}}_{11} & \hat{\M{A}}_{12} \\ \hat{\M{A}}_{21} & \hat{\M{A}}_{22} \end{pmatrix}$ \Comment{Break $\hat{\M{V}}$ and $\hat{\M{A}}$ into $B \times B$ (coarse) blocks}
    \vspace{1mm}
    \State $\begin{pmatrix} \M{A}_{II} & \M{A}_{IJ} \\ \M{A}_{JI} & \M{A}_{JJ} \end{pmatrix} \gets \begin{pmatrix} \hat{\M{A}}_{11} & \hat{\M{A}}_{12} \\ \hat{\M{A}}_{21} & \hat{\M{A}}_{22} \end{pmatrix}$ \Comment{Update $\M{A}([I,J],[I,J])$ on the  corresponding processors}
    \vspace{1mm}
    \State ProcTile$(J,I,1)$: Send $\hat{\M V}_{21}$ to ProcTile$(I,I,1)$  \label{data_exchange_start} 
    \State ProcTile$(I,J,1)$: Send $\hat{\M V}_{12}$ to ProcTile$(J,J,1)$ 
    \For{$K = 1:\sqrt{P/c}/\mu$ \textbf{in parallel}}
        \If{$K \neq I,J$}
        \State ProcTile$(I,K,1)$ and ProcTile$(J,K,1$): Exchange $\M{A}_{IK} \leftrightarrow \M{A}_{JK}$ 
        \State ProcTile$(K,I,1)$ and ProcTile$(K,J,1$): Exchange $\M{A}_{KI} \leftrightarrow \M{A}_{KJ}$ 
        \EndIf
        \State ProcTile$(K,I,1)$ and ProcTile$(K,J,1)$: Exchange $\M{Q}_{KI} \leftrightarrow \M{Q}_{KJ}$
    \EndFor
    \Comment{Continued on next page $\rightarrow $} \algstore{bkbreak} \end{algorithmic} 
\end{algorithm}

\begin{algorithm}
\begin{algorithmic} \algrestore{bkbreak}

    \State \Comment{Continued from previous page}
    \vspace{1mm}
    
     \State ProcTile$(I,I,1)$: Broadcast $[\hat{\M V}_{11}, \hat{\M V}_{21}]$ to ProcTile$(I,K,1)$ and ProcTile$(K,I,1)$ for all $K \neq I$ \label{broadcast_start}
    \State ProcTile$(J,J,1)$: Broadcast $[\hat{\M V}_{12}, \hat{\M V}_{22}]$ to ProcTile$(J,K,1)$ and ProcTile$(K,J,1)$ for all $K \neq J$ 
    \vspace{1mm}

    \For{$K = 1:n/B$ \textbf{in parallel}} \Comment{Update (coarse) block rows/columns of $\M{A}$ and $\M{Q}$.} \label{line: start_update}
        \If{$K \neq I,J$} 

        \State ProcTile$(I,K,1)$: Swap off-diagonal blocks $\hat{\M{V}}_{11} \gets \hat{\M{V}}_{11}^T$ and $\hat{\M{V}}_{21} \gets \hat{\M{V}}_{21}^T$
        \State \hspace{2.5cm} $\M{M}_1 =$ \Call{2.5D\_MM}{$\hat{\M{V}}_{11}$, $\M{A}_{IK}$, ProcSlab$(I,K,:)$}
        \
        \State \hspace{2.5cm} $\M{M}_2 = $ \Call{2.5D\_MM}{$\hat{\M{V}}_{21}$, $\M{A}_{JK}$, ProcSlab$(I,K,:)$}
        \State \hspace{2.5cm} $\M{A}_{IK} \gets  \M{M}_1 + \M{M}_2$ 
        \vspace{1mm}

        \State ProcTile$(J,K,1)$: Swap off-diagonal blocks $\hat{\M{V}}_{12} \gets \hat{\M{V}}_{12}^T$ and $\hat{\M{V}}_{22} \gets \hat{\M{V}}_{22}^T$
        \State \hspace{2.5cm} $\M{M}_1 =$ \Call{2.5D\_MM}{$\hat{\M{V}}_{12}$, $\M{A}_{IK}$, ProcSlab$(J,K,:)$}
        \
        \State \hspace{2.5cm} $\M{M}_2 = $ \Call{2.5D\_MM}{$\hat{\M{V}}_{22}$, $\M{A}_{JK}$, ProcSlab$(J,K,:)$}
        \State \hspace{2.5cm} $\M{A}_{JK} \gets  \M{M}_1 + \M{M}_2$ 
        \vspace{2mm}

        \State ProcTile$(K,I,1)$: $\M{M}_1 =$ \Call{2.5D\_MM}{$\M{A}_{KI}$, $\hat{\M{V}}_{11}$, ProcSlab$(K,I,:)$}
        \State \hspace{2.5cm} $\M{M}_2 = $ \Call{2.5D\_MM}{$\M{A}_{KJ}$, $\hat{\M{V}}_{21}$, ProcSlab$(K,I,:)$}
        \State \hspace{2.5cm} $\M{A}_{KI} \gets  \M{M}_1 + \M{M}_2$ 
        \vspace{2mm}

         \State ProcTile$(K,J,1)$: $\M{M}_1 =$ \Call{2.5D\_MM}{$\M{A}_{KI}$, $\hat{\M{V}}_{12}$, ProcSlab$(K,J,:)$}
        \State \hspace{2.5cm} $\M{M}_2 = $ \Call{2.5D\_MM}{$\M{A}_{KJ}$, $\hat{\M{V}}_{22}$, ProcSlab$(K,J,:)$}
        \State \hspace{2.5cm} $\M{A}_{KJ} \gets  \M{M}_1 + \M{M}_2$ 
        \EndIf
        \vspace{1mm}

        \State ProcTile$(K,I,1)$: $\M{M}_1 =$ \Call{2.5D\_MM}{$\M{Q}_{KI}$, $\hat{\M{V}}_{11}$, ProcSlab$(K,I,:)$} 
        \State \hspace{2.5cm} $\M{M}_2 = $ \Call{2.5D\_MM}{$\M{Q}_{KJ}$, $\hat{\M{V}}_{21}$, ProcSlab$(K,I,:)$}
        \State \hspace{2.5cm} $\M{Q}_{KI} \gets  \M{M}_1 + \M{M}_2$ 
        \vspace{2mm}

        \State ProcTile$(K,J,1)$: $\M{M}_1 =$ \Call{2.5D\_MM}{$\M{Q}_{KI}$, $\hat{\M{V}}_{12}$, ProcSlab$(K,J,:)$}
        \State \hspace{2.5cm} $\M{M}_2 = $ \Call{2.5D\_MM}{$\M{Q}_{KJ}$, $\hat{\M{V}}_{22}$, ProcSlab$(K,J,:)$}
        \State \hspace{2.5cm} $\M{Q}_{KJ} \gets  \M{M}_1 + \M{M}_2$ 

    \EndFor
  \EndFor
\EndFor
\EndFunction
\end{algorithmic}
\end{algorithm}

Algorithm~\ref{alg:2.5D_Parallel_Jacobi} presents pseudocode for our 2.5D version of parallel Jacobi. Like the 2D algorithm, the pseudocode here describes only a single sweep of the algorithm, which again proceeds according to the anti-diagonal block ordering discussed in Section~\ref{section: 2D_parallel} (e.g., generated by Algorithm~\ref{alg:anti_diagonal_list}). Accordingly, the convergence discussion from the previous section carries over here, where we again omit a pivoting step -- and a check on each subproblem -- for simplicity. Note that, in this case, we could use the 2.5D version of LUPP due to Solomonik and Demmel \cite{SD11} to guarantee convergence while leveraging the extra memory available in the 2.5D setting.  \\
\indent As outlined above, Algorithm~\ref{alg:2.5D_Parallel_Jacobi} can be interpreted as running 2D parallel Jacobi over the top layer of the grid, only calling on the remaining layers for subproblem diagonalization (line~\ref{line: 2D_call}) and the subsequent updates to $\M{A}$ and $\M{Q}$ (beginning in line~\ref{line: start_update}). The latter employs 2.5D matrix multiplication with explicit calls to Algorithm~\ref{alg:2.5DMM}. Each such call is executed by the processors on the top layer of the grid that both own the matrices being multiplied and will store their product on output.

\begin{remark} {\normalfont 
   In the pseudocode of Algorithm~\ref{alg:2.5D_Parallel_Jacobi}, the (coarse) block row/column updates to $\M{A}$ and $\M{Q}$ are broken into 2.5D matrix multiplications of size $B \times B$. In practice, it may be more efficient to, for example, allow all processors in ProcSlab$([I,J],:,:)$ to collaborate on the update to coarse rows $I$ and $J$ of $\M{A}$, which could be done as one large (rectangular) 2.5D matrix multiplication. Moreover, this would allow us to relax the condition $c \leq \mu$, which is currently necessary to ensure that the processor grid used for the aforementioned $B \times B$ multiplications satisfies the requirements of Algorithm~\ref{alg:2.5DMM}. We choose not to do this in the pseudocode presented here for the following reasons:\ (1) it does not change the asymptotic complexity of the algorithm and (2) it both makes for easier comparison with Algorithm~\ref{alg:High_Level_Parallel_Jacobi} and allows us to avoid introducing additional notation.}
\end{remark}

We now analyze the complexity of Algorithm~\ref{alg:2.5D_Parallel_Jacobi} for arbitrary choices of the batching parameter $\mu \in [c,\sqrt{P/c}]$ and the utilization factor $\eta \in [2\mu/\sqrt{cP},\,1]$.

\begin{theorem}\label{thm: 2.5D_parallel_comp} 
    One sweep of 2.5D parallel Jacobi (i.e., Algorithm~\ref{alg:2.5D_Parallel_Jacobi}) has complexity
    $$
        \alpha \cdot O \left( X \cdot\log \left( \eta \mu \sqrt{cP} \right) + \frac{\sqrt{P/c}}{\mu} \log \left( \frac{\sqrt{P/c}}{\mu} \right)\right) + \beta \cdot O \left(\frac{n^2}{X} + \frac{n^2}{\sqrt{cP}}\right) + \gamma \cdot O\left( \frac{n^3}{X^2} + \frac{n^3}{P} \right)
    $$
    for $X = \sqrt[4]{\frac{\eta^2 P^3}{\mu^2c}}$, assuming that $O(1)$ sweeps of 2D parallel Jacobi are applied to each subproblem. 
\end{theorem}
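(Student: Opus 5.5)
The plan is to split one sweep of Algorithm~\ref{alg:2.5D_Parallel_Jacobi} into two phases per outer step and bound each with an earlier theorem. Phase (a) is the subproblem diagonalization: the scatter, the $O(1)$ calls to 2D parallel Jacobi in line~\ref{line: 2D_call}, and the gather. Phase (b) is everything else: the tile exchanges, the broadcasts of $\hat{\M V}$, and the 2.5D matrix multiplications that update coarse rows and columns of $\M A$ and $\M Q$. The outer loop runs $\sqrt{P/c}/\mu = n/B$ times. Subproblems in the same group use disjoint processor slabs, so the cost along the critical path is $(n/B)$ times the maximum cost of one inner iteration.

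For phase (a), count the processors in $\hat{\mathfrak P}$. ProcSlab$(:,[I,J],:)$ contains $2\mu\sqrt{P/c}\cdot c = 2\mu\sqrt{cP}$ processors, so $|\hat{\mathfrak P}| = P' := 2\eta\mu\sqrt{cP}$. This is consistent with $\hat L$ having $\sqrt{P'}$ groups. Applying Theorem~\ref{thm: parallel_comp} with $n \to 2B$ and $P \to P'$, each call costs
\[
\alpha\, O(\sqrt{P'}\log P') + \beta\, O(B^2/\sqrt{P'}) + \gamma\, O(B^3/P').
\]
For the scatter and gather, each participating processor sends or receives $O(B^2/P' + b^2)$ words in $O(\log P')$ messages, which is dominated by the other terms. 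Multiplying by $n/B$ and substituting $B = \mu n\sqrt{c/P}$ gives the following.
\begin{itemize}
\item Latency: $(n/B)\sqrt{P'} = \frac{\sqrt{P/c}}{\mu}\sqrt{2\eta\mu\sqrt{cP}} \asymp \sqrt{\eta P^{3/2}/(\mu c^{1/2})} = X$. With the log factor this gives the $X\log(\eta\mu\sqrt{cP})$ term.
\item Bandwidth: $(n/B)\,B^2/\sqrt{P'} = nB/\sqrt{P'}$. Expanding, this is $n^2\mu\sqrt{c/P}/\sqrt{\eta\mu\sqrt{cP}} = n^2/X$ up to constants.
\item Arithmetic: $nB^2/P' \asymp n^3\mu c/(P\,\eta\sqrt{cP}) = n^3/X^2$.
\end{itemize}
I will check these three identities by writing $X^2 = \eta P^{3/2}/(\mu\sqrt c)$.

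For phase (b), each coarse tile of $\M A$ or $\M Q$ needs $O(1)$ calls to Algorithm~\ref{alg:2.5DMM} of size $B$ on a $\mu\times\mu\times c$ slab with $\mu^2 c$ processors. This is valid because $c\le\mu$, which gives $c \le (\mu^2 c)^{1/3}$. By Theorem~\ref{thm: 2.5d_MM}, each call costs
\[
\alpha\, O(\mu/c + \log c) + \beta\, O(B^2/(\mu c)) + \gamma\, O(B^3/(\mu^2 c)).
\]
Here $B^2/(\mu c) = \mu b^2/c$. Multiplied by $n/B = \sqrt{P/c}/\mu$, this gives $b^2\sqrt{P/c}/c = n^2/\sqrt{cP}$, and the flops give $n^3/P$. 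The exchanges and broadcasts of $\hat{\M V}$ among coarse tiles on the top layer move $O(b^2)$ words per processor. The broadcasts go to $O(\sqrt{P/c}/\mu)$ tiles and take $O(\log(\sqrt{P/c}/\mu))$ messages. The resulting bandwidth is $(n/B)\,b^2 = n^2/(\mu\sqrt{cP}) \le n^2/\sqrt{cP}$. The latency is $\frac{\sqrt{P/c}}{\mu}\log\frac{\sqrt{P/c}}{\mu}$, which is the second latency term.

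The main obstacle is the latency of the 2.5D multiplications: $(n/B)(\mu/c+\log c) = \sqrt{P/c^3} + \frac{\sqrt{P/c}}{\mu}\log c$. I must show this is absorbed by the stated terms. The second piece is fine whenever $\log c \lesssim \log(\sqrt{P/c}/\mu)$, or else it is bounded by $X\log(\eta\mu\sqrt{cP})$. For the first piece, $X \ge \sqrt{P/c}/\mu\cdot\sqrt{2\mu\cdot 2\mu} \gtrsim \sqrt{P/c} \ge \sqrt{P/c^3}$, using $\eta \ge 2\mu/\sqrt{cP}$, so it is dominated by the $X$ term. I will handle the $\log c$ piece the same way: $\eta\mu\sqrt{cP} \ge 2\mu^2 \ge 2c^2$, so $\log c \lesssim \log(\eta\mu\sqrt{cP})$, and $\sqrt{P/c}/\mu \le X$. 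A secondary care point is the fine-level accounting of the unspecified scatter and gather. There I would only argue that the volume per processor is $O(b^2 + B^2/P')$ and that these collectives can be realized in $O(\log P')$ messages. Summing phases (a) and (b) then yields the stated bound.
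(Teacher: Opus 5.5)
Your proposal is correct and follows the paper's proof. You bound the per-step cost of the scatter/gather, the inner 2D Jacobi call on $2\eta\mu\sqrt{cP}$ processors (via Theorem~\ref{thm: parallel_comp}), the tile sends/exchanges/broadcasts, and the $\mu\times\mu\times c$ 2.5D multiplications (via Theorem~\ref{thm: 2.5d_MM}), then multiply by the $\sqrt{P/c}/\mu$ outer steps and absorb lower-order terms using $\eta\ge 2\mu/\sqrt{cP}$ and $\mu\ge c$.

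Two small fixes are needed. First, $(n/B)\,b^2=\frac{c}{\mu}\cdot\frac{n^2}{\sqrt{cP}}$, not $n^2/(\mu\sqrt{cP})$, so absorbing it into $n^2/\sqrt{cP}$ genuinely requires $\mu\ge c$; this is the paper's $B^2/(\mu c)\ge b^2$. Second, the update phase is not processor-disjoint across pairs in a group: ProcTile$(I,I',1)$ serves both pair $(I,J)$ and pair $(I',J')$. This costs only a constant factor, since each processor handles at most two pairs, as the paper notes.
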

\begin{proof}
      We summarize below the work associated with each index pair $(I,J) \in g$. For reference, recall that communication operations between coarse processor tiles follow Figure~\ref{fig:tile_comm}.
      \begin{enumerate}
          \item Line 5 (Scatter step): The $4 \mu^2$ processors in ProcSlab$([I,J],[I,J],1)$ scatter the $2B \times 2B$ subproblem $\hat{\M{A}}$ to $2\eta\mu\sqrt{cP}$ processors $\mathfrak{P}$ in ProcSlab$(:,[I,J],:)$. Since processors on both the sending and receiving end of this operation own contiguous blocks of $\hat{\M{A}}$, each individual processor in ProcSlab$([I,J],[I,J],1)$ will scatter its $b \times b$ block of $\hat{\M{A}}$ to $O\left(\frac{2 \eta \mu \sqrt{cP}}{4 \mu^2} \right) = O\left( \frac{\eta \sqrt{cP}}{\mu} \right)$ processors in $\mathfrak{P}$. Recalling Table~\ref{tab:mpi_routines}, this has communication complexity
          \begin{equation} \label{eqn: scatter_cost}
              \alpha \cdot O\left( \log \left( \frac{\eta \sqrt{cP}}{\mu} \right)\right) + \beta \cdot O\left( b^2 \right).
          \end{equation}
          \item Line 8 (Inner diagonalization): $2\eta\mu\sqrt{cP}$ processors apply one sweep of 2D parallel Jacobi to the $2B \times 2B$ matrix $\hat{\M{A}}$. By Theorem~\ref{thm: parallel_comp}, the complexity of this operation is
          \begin{equation}\label{eqn: cost_of_call_to_2D}
          \alpha \cdot O \left( \sqrt{\eta \mu\sqrt{cP}} \log \left( \eta \mu \sqrt{cP} \right) \right) + \beta \cdot O \left( \frac{B^2}{\sqrt{\eta \mu \sqrt{cP}}} \right) + \gamma \cdot O\left( \frac{B^3}{\eta \mu \sqrt{cP}} \right).
          \end{equation}
          Moreover, this bounds the cost of the loop in lines 7-9 under the assumption that only $O(1)$ calls to 2D parallel Jacobi are made.
          \item Line 10 (Gather step): Each processor in $\mathfrak{P}$ sends its blocks of $\hat{\M{A}}$ and $\hat{\M{V}}$ to $O(1)$ processors in ProcSlab$([I,J],[I,J],1)$. Making the same argument as in item one, the cost of this operation is given by \eqref{eqn: scatter_cost}.
          \item Lines 13-14 (Rotation-block send): Each processor in ProcTile$(J,I,1)$ and ProcTile$(I,J,1)$ sends one $b \times b$ block of $\hat{\M{V}}$ to a unique processor in either ProcTile $(I,I,1)$ or ProcTile$(J,J,1)$. This has complexity $\alpha \cdot O(1) + \beta \cdot O(b^2)$.
          \item Lines 15-21 (Block exchange): Each processor in coarse processor row/column $I$ and $J$ exchanges $b \times b$ blocks of $\M{A}$ and $\M{Q}$ with up to two processors. Again the cost is $\alpha \cdot O(1) + \beta \cdot O(b^2)$.
          \item Lines 23-24 (Rotation broadcast): Each processor in ProcTile$(I,I,1)$ and ProcTile$(J,J,1)$ broadcasts two $b \times b$ blocks of $\hat{\M{V}}$ to $O \left( \frac{\sqrt{P/c}}{\mu} \right)$ processors (that is, one in each processor tile in the same coarse row/column). Accordingly, the cost of this operation is 
          \begin{equation}
              \alpha \cdot  O\left(\log \left( \frac{\sqrt{P/c}}{\mu} \right) \right) + \beta \cdot O(b^2).
          \end{equation}
          \item Lines 25-48 (Right/left updates): Each processor in coarse processor row/column $I$ and $J$ participates in at most four 2.5D matrix multiplications, each of which computes the product of two $B \times B$ matrices over a $\mu \times \mu \times c$ processor grid. By Theorem~\ref{thm: 2.5d_MM}, these multiplications have complexity
          \begin{equation}
              \alpha \cdot O \left( \frac{\mu}{c} + \log c \right) + \beta \cdot O \left( \frac{B^2}{\mu c} \right)  + \gamma \cdot O \left( \frac{B^3}{\mu^2 c} \right)
          \end{equation}
          This dominates the cost associated with both (1) taking the transpose of blocks of $\hat{\M{V}}$ when necessary and (2) summing  the resulting $B \times B$ matrices $\M{M}_1$ and $\M{M}_2$.
      \end{enumerate}
      Altogether, we obtain the following bounds on the latency, bandwidth, and arithmetic costs of handling one pair $(I,J) \in g$.
      \begin{equation}\label{eqn: separate_cost}
        \aligned 
          \text{Latency:}& \; \; \alpha \cdot O\left( \log \left( \frac{\eta \sqrt{cP}}{\mu} \right) + \sqrt{\eta \mu \sqrt{cP}} \log \left( \eta \mu \sqrt{cP} \right) + \log \left( \frac{\sqrt{P/c}}{\mu} \right) + \frac{\mu}{c} + \log c \right)  \\
          \text{Bandwidth:}& \; \; \beta \cdot O \left( b^2 + \frac{B^2}{\sqrt{\eta \mu \sqrt{cP}}} + \frac{B^2}{\mu c} \right) \\
          \text{Arithmetic:}& \; \; \gamma \cdot O \left( \frac{B^3}{\eta \mu \sqrt{cP}} + \frac{B^3}{\mu^2 c} \right)
          \endaligned 
      \end{equation}
    To simplify, recall $\eta \geq 2 \mu/\sqrt{cP}$, which implies that $\sqrt{\eta \mu \sqrt{cP}} \log \left( \eta \mu \sqrt{cP} \right) $ dominates all but (possibly) the third term in the latency bound.  Additionally, we have $\frac{B^2}{\mu c} = \frac{\mu^2 b^2}{\mu c} = \frac{\mu b^2}{c} \geq b^2$ since $\mu \geq c$. Hence, \eqref{eqn: separate_cost} can be written cumulatively as 

    \begin{equation}
      \alpha \cdot O\left( \sqrt{\eta \mu \sqrt{cP}} \log \left( \eta \mu \sqrt{cP} \right) + \log \left( \frac{\sqrt{P/c}}{\mu} \right) \right) + \beta \cdot O \left( \frac{B^2}{\sqrt{\eta \mu \sqrt{cP}}} + \frac{B^2}{\mu c} \right) +  \gamma \cdot O \left( \frac{B^3}{\eta \mu \sqrt{cP}} + \frac{B^3}{\mu^2 c} \right)
    \end{equation}

    Since each processor participates in some part of the computation for at most two pairs $(I,J)$ in $g$, this bounds the complexity of one parallel step of the algorithm (that is, corresponding to each group of indices in the list $L$). There are $|L| = \sqrt{P/c}/\mu$ such steps in total, so we conclude that the complexity of one sweep of 2.5D parallel Jacobi is
    \begin{equation}
    \aligned
    \alpha \cdot O \left( \sqrt{ \frac{\eta P^{3/2}}{\mu \sqrt{c}}} \log \left( \eta \mu \sqrt{cP} \right) + \frac{\sqrt{P/c}}{\mu} \log \left( \frac{\sqrt{P/c}}{\mu} \right)\right) &+ \beta \cdot O \left(\frac{n^2}{\sqrt{\eta P^{3/2}/\mu \sqrt{c}}} + \frac{n^2}{\sqrt{cP}}\right) \\
    & \hspace{-4cm} + \gamma \cdot O\left( \frac{n^3}{\eta P^{3/2}/\mu \sqrt{c}} + \frac{n^3}{P} \right).
    \endaligned 
    \end{equation}
    Here, we further simplify by recalling $B = \mu n/\sqrt{P/c}$. The result now follows by taking $X = \sqrt[4]{\frac{\eta^2 P^{3}}{\mu^2 c}}$.
\end{proof} \\

Clearly, the complexity of 2.5D parallel Jacobi depends dramatically on the choice of batching parameter $\mu \in [c,\sqrt{P/c}]$ and utilization factor $\eta \in [ 2\mu/\sqrt{cP},1].$ Indeed, both the bandwidth and latency costs of Algorithm~\ref{alg:2.5D_Parallel_Jacobi} may exceed or beat that of 2D parallel Jacobi for particular choices of $\mu$ and $\eta$. To illustrate this -- and to make Theorem~\ref{thm: 2.5D_parallel_comp} more interpretable -- we exhibit below a handful of extreme cases. The takeaway is that optimal values of latency and bandwidth cannot be achieved simultaneously in Algorithm~\ref{alg:2.5D_Parallel_Jacobi}.

\subsubsection{2.5D Latency Optimal Jacobi}
We start by noting that the dominant term in the latency bound from Theorem~\ref{thm: 2.5D_parallel_comp} increases with $\eta$. Thus, latency is minimized by taking $\eta$ as small as allowed. If we take the minimum value $\eta = 2\mu/\sqrt{cP}$, which corresponds to using only the $4\mu^2$ processors in ProcSlab$([I,J],[I,J],1)$ in the calls to 2D parallel Jacobi, the sweep-complexity of Algorithm~\ref{alg:2.5D_Parallel_Jacobi} becomes
\begin{equation}
     \alpha \cdot O \left( \sqrt{\frac{P}{c}} \log(\mu^2) + \frac{\sqrt{P/c}}{\mu} \log \left( \frac{\sqrt{P/c}}{\mu} \right)\right) + \beta \cdot O \left( \frac{n^2}{\sqrt{P/c}} \right) + \gamma \cdot O\left(\frac{n^3}{P/c} \right).
 \end{equation}
This is not particularly satisfying; while the latency cost decreased (relative to Theorem~\ref{thm: parallel_comp}) both the bandwidth \textit{and} arithmetic costs went up. In fact, this is essentially the complexity of running the 2D algorithm over only the top layer of the grid. \\
\indent A more reasonable choice would be $\eta = \mu\sqrt{c/P}$, which is the smallest value of $\eta$ for which  the arithmetic cost of Algorithm~\ref{alg:2.5D_Parallel_Jacobi} is $O(n^3/P)$. In this setting, the total complexity is 
\begin{equation}\label{eqn: latency_optimal}
     \alpha \cdot O \left( \sqrt{P} \log(\mu^2c) + \frac{\sqrt{P/c}}{\mu} \log \left( \frac{\sqrt{P/c}}{\mu} \right)\right) + \beta \cdot O \left( \frac{n^2}{\sqrt{P}} \right) + \gamma \cdot O\left(\frac{n^3}{P} \right).
 \end{equation}
Since $\mu^2c\le P$, the logarithmic factor is no larger than that of the 2D algorithm up to constants. Hence, this choice preserves the optimal arithmetic cost while modestly improving latency relative to the 2D bound (and leaving bandwidth unchanged).
Note that $\eta = \mu\sqrt{c/P}$ corresponds to using only processors that lie directly below the processor slab that owns $\hat{\M{A}}$ in line 8. 

\subsubsection{2.5D Bandwidth Optimal Jacobi}
In contrast to latency, the bandwidth cost in Theorem~\ref{thm: 2.5D_parallel_comp} decreases as $\eta$ increases. Consequently, the best obtainable bound is always at least $\Omega(n^2/\sqrt{cP})$. If we take $\eta = 1$, equivalently if we use all available processors in the subproblem diagonalization (i.e., plot (b) of Figure~\ref{fig: scatter_and_gather_for_2.5D}), the complexity of Algorithm~\ref{alg:2.5D_Parallel_Jacobi} becomes
\begin{equation}
    \alpha \cdot O \left(\sqrt{\frac{P^{3/2}}{\mu \sqrt{c}}}\log \left( \mu \sqrt{cP} \right) + \frac{\sqrt{P/c}}{\mu} \log \left( \frac{\sqrt{P/c}}{\mu} \right) \right) + \beta \cdot O \left( \frac{n^2}{\sqrt{P^{3/2}/\mu \sqrt{c}}} + \frac{n^2}{\sqrt{cP}} \right)+ \gamma \cdot O\left( \frac{n^3}{P} \right)
\end{equation}
Subsequently setting $\mu = \Theta(\sqrt{P/c^3})$ yields optimal bandwidth for 2.5D parallel Jacobi (at the cost of an increase in latency). We summarize this in the following corollary.
\begin{corollary}\label{cor: bandwidth_optimal_jacobi}
    Suppose we run 2.5D parallel Jacobi (Algorithm~\ref{alg:2.5D_Parallel_Jacobi}) with the following:
    \begin{enumerate}
        \item $\mu = \Theta(\sqrt{P/c^3})$.
        \item $\eta = 1$, meaning all available processors are utilized in each call to 2D parallel Jacobi.
    \end{enumerate}
    Then the complexity of each sweep is
    \[ \alpha \cdot O \left(\sqrt{cP} \log \left( \frac{P}{c} \right) \right) + \beta \cdot O \left( \frac{n^2}{\sqrt{cP}}\right) + \gamma \cdot O\left(\frac{n^3}{P} \right), \]
    again assuming that only $O(1)$ sweeps of the 2D algorithm are applied to each subproblem.
\end{corollary}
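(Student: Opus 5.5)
The corollary follows by substituting $\eta = 1$ and $\mu = \Theta(\sqrt{P/c^3})$ into Theorem~\ref{thm: 2.5D_parallel_comp}, or equivalently into the $\eta=1$ specialization displayed just before it. Before doing so, I will check that the choice is admissible.

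\textbf{Admissibility.} We need $c \leq \mu \leq \sqrt{P/c}$. The upper bound holds because $\sqrt{P/c^3} \leq \sqrt{P/c}$ when $c \geq 1$. The lower bound $\mu \geq c$ is equivalent to $P/c^3 \geq c^2$, i.e.\ $c \leq P^{1/5}$. This is exactly the restriction recorded in the 2.5D Jacobi row of Table~\ref{tab: complexity_bounds}, so I will state it explicitly and choose the constant hidden in $\Theta$ so that $\mu$ is an integer at least $c$. The choice $\eta=1$ trivially satisfies $\eta \ge 2\mu/\sqrt{cP}$.

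\textbf{Substitution.} With $\eta = 1$ we have $X = (P^3/(\mu^2 c))^{1/4} = \sqrt{P^{3/2}/(\mu\sqrt c)}$. Plugging in $\mu = \sqrt{P/c^3}$ gives $\mu\sqrt c = \sqrt{P}/c$, so
$$X = \sqrt{P^{3/2}\, c/\sqrt{P}} = \sqrt{cP}.$$
Each cost term then follows directly.
\begin{itemize}
\item \emph{Bandwidth:} $n^2/X + n^2/\sqrt{cP} = O(n^2/\sqrt{cP})$.
\item \emph{Arithmetic:} $n^3/X^2 + n^3/P = n^3/(cP) + n^3/P = O(n^3/P)$.
\item \emph{Latency, first term:} $X\log(\eta\mu\sqrt{cP})$, where $\mu\sqrt{cP} = \sqrt{P/c^3}\,\sqrt{cP} = P/c$. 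This gives $O(\sqrt{cP}\log(P/c))$.
\item \emph{Latency, second term:} $(\sqrt{P/c}/\mu)\log(\sqrt{P/c}/\mu)$, where $\sqrt{P/c}/\mu = c$. This gives $O(c\log c)$.
\end{itemize}

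\textbf{Absorbing the second latency term.} The only step needing care is showing that $c\log c$ is dominated by $\sqrt{cP}\log(P/c)$. Since $c \le P^{1/3}$, we have $c \le \sqrt{cP}$, and $\log c \le \log(P/c)$ because $c^2 \le P$. Hence the second term is at most the first.

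The main obstacle is therefore not a calculation but the domain constraint $c \le P^{1/5}$ needed for $\mu \ge c$. I would make that hypothesis explicit, noting that Remark~1's rectangular 2.5D multiplication would remove the condition $c \le \mu$ without changing asymptotics. Finally, the $O(1)$-sweeps-of-2D assumption is inherited verbatim from Theorem~\ref{thm: 2.5D_parallel_comp}.
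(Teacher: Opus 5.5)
Your proposal is correct and follows the paper's route: substitute $\eta=1$ and $\mu=\sqrt{P/c^3}$ into Theorem~\ref{thm: 2.5D_parallel_comp} to get $X=\sqrt{cP}$, and note that admissibility ($\mu\ge c$) forces $P\ge c^5$, just as the paper does; you also spell out why the $c\log c$ latency term is absorbed, which the paper leaves implicit. One small slip: the remark about using rectangular 2.5D multiplication to relax $c\le\mu$ is the paper's second remark, not Remark~1.
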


Note that the optimal value $\mu = \Theta(\sqrt{P/c^3})$ implies $B = \Theta(n/c)$ and $P \geq c^5$ since $\mu \geq c$. The latter indicates that the bandwidth-optimal algorithm operates on a true 2.5D rather than 3D processor grid.
This parameter choice attains the 2.5D matrix-multiplication bandwidth bound $O(n^2/\sqrt{cP})$ and is therefore bandwidth-optimal under the stated lower-bound model. \\
\indent Of course, in practice a user may not want to use either of the ``extremal" versions of 2.5D parallel Jacobi represented by Corollary~\ref{cor: bandwidth_optimal_jacobi} and \eqref{eqn: latency_optimal}. With this in mind, Figure~\ref{fig:heatmap} shows how the latency/bandwidth costs of Algorithm~\ref{alg:2.5D_Parallel_Jacobi} vary with $\mu$ and $\eta$, specifically in comparison to 2D parallel Jacobi. Note that the regions on these plots where the bandwidth/latency costs of Algorithm~\ref{alg:2.5D_Parallel_Jacobi} improve on those of Algorithm~\ref{alg:High_Level_Parallel_Jacobi} are disjoint. For a more detailed discussion of optimal parameter selection in 2.5D parallel Jacobi, see Appendix~\ref{appendix: optimization}.

\begin{figure}[t]
    \centering
    \includegraphics[width=\linewidth]{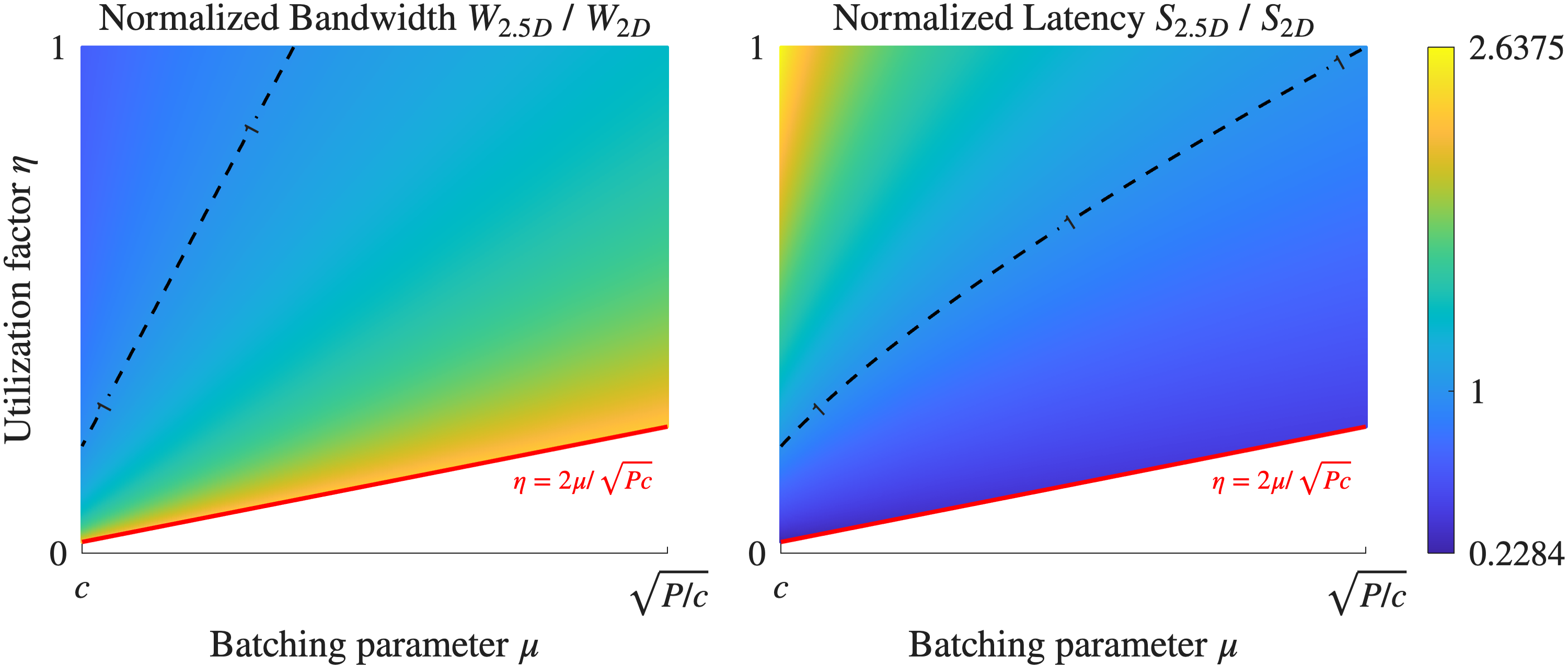}
    \caption{Comparison of bandwidth/latency bounds for 2D and 2.5D parallel Jacobi as a function of the batching parameter $\mu$ and utilization factor $\eta$. In both cases, we record the magnitude of the ratio of the terms in Theorems~\ref{thm: parallel_comp} and~\ref{thm: 2.5D_parallel_comp} (without taking into account hidden constants), which we note are independent of the problem size $n$. In both plots, $c = 8$ and $P = 2^{16}$. We also plot the contour where the normalized bandwidth/latency is 1 (i.e., the black, dashed curve) to mark the point at which 2.5D Jacobi becomes more efficient.}
    \label{fig:heatmap}
\end{figure}

\section{Lower Bounds for Parallel Jacobi}\label{section: lower_bound}
\begin{figure}[t]
    \centering
    \begin{subfigure}{.4\linewidth}
        \centering
        \includegraphics[width=\linewidth]{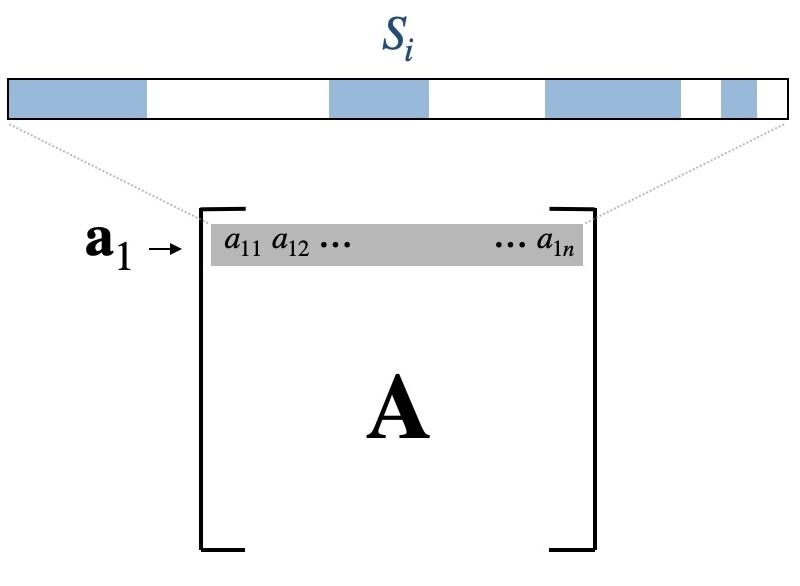}
        \caption{}
    \end{subfigure}\hspace{2cm}%
    \begin{subfigure}{.35\linewidth}
        \centering
        \includegraphics[width=\linewidth]{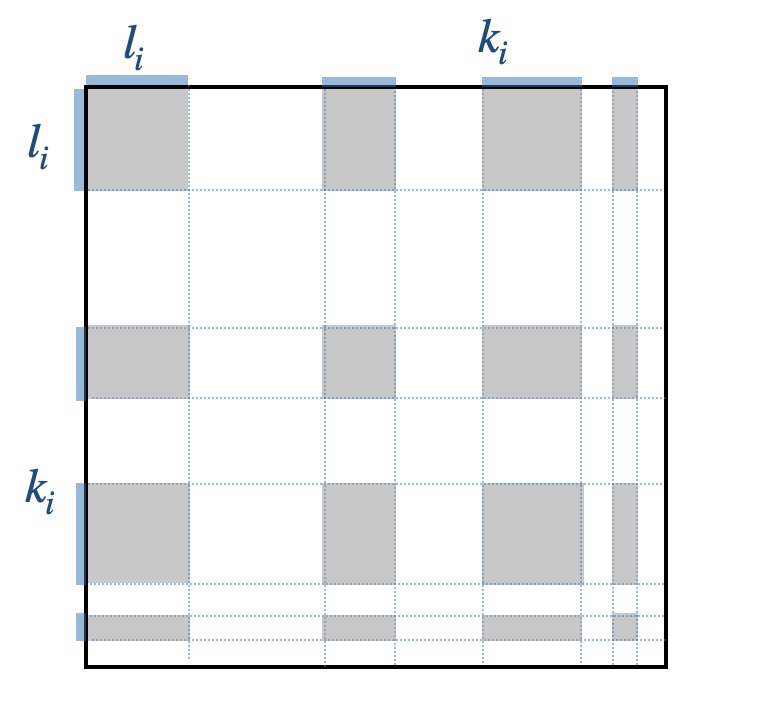}
        \caption{}
    \end{subfigure}
\caption{Step $i$ in the critical path used to prove Theorem~\ref{thm: parallel_lower_bound}: (a) presents the subset $S_i$ of ${\bf a}_1$ zeroed out while (b) highlights the corresponding $(l_i+k_i) \times (l_i+k_i)$ subproblem of ${\bf A}$ to be diagonalized.}
    \label{fig: 2.5D_lower_bound}
\end{figure}
In each set of complexity bounds derived above (as well as in Section~\ref{section: 2D_parallel}), the product of the latency and bandwidth costs was, up to log factors, $O(n^2)$. For 2.5D parallel Jacobi, this stemmed from the fact that any decrease in bandwidth/latency (relative to Theorem~\ref{thm: parallel_comp}) accompanied a proportional increase in the other, suggesting an unavoidable latency/bandwidth tradeoff. In this section, we codify this relationship by proving the lower bound $W(n,P) \cdot S(n,P) = \Omega(n^2)$ for any parallel Jacobi algorithm that satisfies the following:
\begin{enumerate}
    \item Eventually -- e.g., after possibly executing some amount of recursion, following \cite[Section 4]{arxiv_manuscript} -- the algorithm diagonalizes subproblems of $\M{A}$ on individual processors, using any $O(n^3)$ routine to do so.
    \item After each of these ``direct" diagonalizations, the corresponding processor must communicate with at least one other processor before row/column updates can be applied. 
\end{enumerate}

These are mild assumptions, which we note are satisfied by both the 2D and 2.5D version of parallel Jacobi presented in this paper. For these algorithms, we also exhibit a similar tension between latency and arithmetic, captured by the lower bound $F(n,P)\cdot S(n,P)^2 = \Omega(n^3)$. In doing so, we add parallel Jacobi to the list of numerical linear algebra computations for which these tradeoffs have been established \cite{SD11,solomonik2017trade}. 

\begin{theorem}\label{thm: parallel_lower_bound}
    Let $F(n,P)$, $W(n,P)$, and $S(n,P)$ be the arithmetic, bandwidth, and latency costs, respectively, of one sweep of any parallel Jacobi algorithm that satisfies the two assumptions listed above (when applied to an $n \times n$ real symmetric matrix $\M{A}$ using $P$ processors). We have the following:
    $$ W(n,P) \cdot S(n,P) = \Omega(n^2) \; \; \; \text{and} \; \; \; F(n,P) \cdot S(n,P)^2 = \Omega(n^3).$$
\end{theorem}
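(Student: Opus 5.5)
We will prove both bounds with a critical-path argument built around the first row of $\M{A}$, as suggested by Figure~\ref{fig: 2.5D_lower_bound}. Let $\mathbf{a}_1$ be the off-diagonal part of the first row of $\M{A}$, which has $n-1$ entries. A full sweep must annihilate, at some point, every pair $(1,j)$ with $j \neq 1$. By assumption 1, every annihilation ultimately happens inside a direct diagonalization of a subproblem on a single processor. Let these direct diagonalizations that involve index $1$ be indexed $i = 1,\dots,m$, in the order they occur. The $i$-th one zeroes a subset $S_i \subseteq \mathbf{a}_1$ with $|S_i| = k_i$, and is performed on an $(l_i+k_i)\times(l_i+k_i)$ subproblem, where $l_i \geq 1$ counts the indices that are not in $S_i$ (including index $1$ itself). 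Since the $S_i$ together cover $\mathbf{a}_1$, we get $\sum_i k_i \geq n-1$.

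The key structural claim is that these $m$ diagonalizations lie on a common chain of dependencies, so they belong to one critical path. Any two of them share row/column $1$. Consider step $i+1$. Before it can act on row $1$, the updates from step $i$ to block row/column $1$ must have been applied, because the algorithm is sequentially consistent on row $1$. By assumption 2, applying those updates requires the processor that performed step $i$ to communicate with at least one other processor. Hence the path contains at least one message between consecutive steps, which gives $S \geq m$. Along the same path, step $i$ moves $\Omega((l_i+k_i)^2)$ words: either the subproblem had to be assembled on the processor, or the resulting $(l_i+k_i)\times(l_i+k_i)$ orthogonal factor, which is dense in general, has to be sent out so that row/column updates can be applied. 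In the same way, the direct diagonalization costs $\Omega((l_i+k_i)^3)$ flops. Summing along the path,
\[
W \geq \Omega\Big(\sum_{i=1}^m k_i^2\Big), \qquad F \geq \Omega\Big(\sum_{i=1}^m k_i^3\Big).
\]

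The rest is convexity. Hölder's inequality (or Cauchy--Schwarz) with $\sum_i k_i \geq n-1$ gives $\sum_i k_i^2 \geq (n-1)^2/m$ and $\sum_i k_i^3 \geq (n-1)^3/m^2$. Combining these with $S \geq m$ yields $W\cdot S \geq \Omega(n^2)$ and $F\cdot S^2 \geq \Omega(n^3)$, which are the two claimed bounds.

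The main obstacle is making the critical-path claims rigorous without pinning down a specific algorithm, in two respects. First, we must justify that the $m$ diagonalizations touching row $1$ really are serially dependent. A concern is that two of them might be done concurrently on different processors with their rotations merged later. We plan to argue that this is impossible: both diagonalizations need the current contents of row $1$, and each one's output changes that row. Second, we must justify that each step actually costs $\Omega((l_i+k_i)^2)$ words of communication, not merely local memory traffic. We intend to use assumption 2, strengthened by the observation that the updates to entries outside the diagonalized subproblem live on other processors and need the dense factor. If only $O(1)$ words had to leave the processor, we would instead charge the words needed to bring the subproblem onto the processor. The exception is a processor that already owns it, but then its contents must have been updated by a previous step, which again puts the communication on the path.
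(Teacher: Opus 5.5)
Your proposal is correct and follows essentially the same route as the paper. Both arguments take a serial path through the single-processor diagonalizations that touch row $1$, charge each step $\Omega(1)$ messages, $\Omega((l_i+k_i)^2)$ words and $\Omega((l_i+k_i)^3)$ flops, and close with Cauchy--Schwarz and H\"older. The differences are minor: you use $\sum_i k_i \geq n-1$ in place of $\sum_i (l_i+k_i) \geq n$, and you try to justify the serial dependence and the per-step bandwidth charge, which the paper simply asserts as modeling assumptions.
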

\begin{proof}
    Consider the first row ${\bf a}_1 = \begin{pmatrix} a_{11} & a_{12} & \cdots & a_{1n} \end{pmatrix}$ of {\bf A}. In one sweep, parallel Jacobi must visit each of $a_{11}, \ldots, a_{1n}$  (possibly in groups/with entries from other rows) and diagonalize the corresponding subproblem of ${\bf A}$, which we assume takes place within the fast memory of a single processor. In doing so, each algorithm partitions $a_{11}, \ldots, a_{1n}$ into $d$ (possibly non-disjoint) 
    sets $S_1, \ldots S_d$, where $S_i$ contains the entries of ${\bf a}_1$ annihilated after the $i$-th single-processor diagonalization involving the first row of $\M{A}$. \\
    \indent Let $l_i$ be the largest index such that $a_{11}, a_{12}, \ldots, a_{1l_i}$ belongs to $S_i$ and set $k_i = |S_i|-l_i$. Clearly, $l_i \geq 1$ and  $k_i \geq 1$ for all $1 \leq i \leq d $, and $n \leq \sum_{i=1}^d l_i+k_i$.
   Moreover, the sets $S_1, \ldots S_d$ define a $d$ step path within the algorithm, where at step $i$ a subproblem that contains $S_i$ is diagonalized (see Figure~\ref{fig: 2.5D_lower_bound}). Since only one collection of entries in the first row can be zeroed out at a time, this path is serial. Moreover, each step has the following costs
    \begin{itemize}
        \item[(a)] Arithmetic: $\Omega((l_i+k_i)^3)$ 
        \item[(b)] Bandwidth: $\Omega((l_i+k_i)^2)$, assuming the diagonalization must be communicated to at least one other processor. 
        \item[(c)] Latency: $\Omega(1)$.
    \end{itemize}
    We therefore have \ $F(n,P) = \sum_{i=1}^d \Omega((l_i+k_i)^3)$,  $W(n,P) = \sum_{i=1}^d \Omega((l_i+k_i)^2)$, and $S(n,P) = \Omega(d).$
    Now applying Cauchy-Schwarz yields 
    \begin{equation}
        W(n,P) \cdot S(n,P) = \Omega \left( d \sum_{i=1}^d (l_i+k_i)^2 \right) \geq \Omega\left( \left[ \sum_{i=1}^d l_i+k_i \right]^2 \right) \geq \Omega(n^2).
    \end{equation}
    The analogous result for latency and arithmetic follows similarly from Hölder's inequality, which implies $F(n,P)^{1/3} \cdot S(n,P)^{2/3} \geq \Omega(n)$. 
\end{proof} \\

Theorem~\ref{thm: parallel_lower_bound} is flexible in that it allows variable blocking strategies. If we assume that the size of each subproblem  diagonalized directly is fixed and equal to $2b \times 2b$, the complexity bounds simplify to 
\begin{equation}
    F(n,P) = \Omega(db^3), \; \; \; \;  W(n,P) = \Omega(db^2), \; \; \; \; \text{and} \; \; \; \;S(n,P) = \Omega(d). 
\end{equation}
By taking $d = \sqrt{P}$ and $b = n/\sqrt{P}$, these recover the complexity bounds of 2D parallel Jacobi (up to logarithmic factors): 
\begin{equation}
    F(n,P)=\Omega(n^3/P),\qquad
    W(n,P)=\Omega(n^2/\sqrt{P}),\qquad
    S(n,P)=\Omega(\sqrt{P}).
\end{equation}
Similarly, setting $d = \sqrt{cP}$ and $b = n/\sqrt{cP}$ yields 
\begin{equation} \label{eqn: second_lower_bounds}
    F(n,P)=\Omega(n^3/(cP)),\qquad
    W(n,P)=\Omega(n^2/\sqrt{cP}),\qquad
    S(n,P)=\Omega(\sqrt{cP}),
\end{equation}
which matches the bandwidth and latency of bandwidth-optimal 2.5D parallel Jacobi.\footnote{It may not be obvious given the complicated pseudocode, but $b = n/\sqrt{cP}$ is the block size used by 2D parallel Jacobi in line 8 of Algorithm~\ref{alg:2.5D_Parallel_Jacobi} when $\eta = 1$ and $\mu = \sqrt{P/c^3}$, after re-distributing $\hat{\M{A}}$ and $\hat{\M{V}}$.} Of course, the arithmetic lower bound of \eqref{eqn: second_lower_bounds} can never be attained. 

\section{2.5D Parallel Jacobi SVD}\label{section: parallel_jacobi_svd}

Like all algorithms for the symmetric eigenvalue problem, Jacobi's method can be applied to an arbitrary matrix $\M{G} \in {\mathbb R}^{m \times n}$ to obtain a singular value decomposition (SVD). The underlying connection is straightforward:\ diagonalizing the Gram matrix $\M{G}^T\M{G}$, e.g., via Jacobi, yields both the right singular vectors of $\M{G}$ and their corresponding singular values. Accordingly, we can use the results of the previous section to obtain a 2.5D parallel SVD algorithm, which we detail here. Throughout, we assume $m \geq n$ for simplicity. \\
\indent The specific version of Jacobi SVD that we consider parallelizing is presented here (in serial) as Algorithm~\ref{alg:Jacobi_SVD}. This routine runs block Jacobi (i.e., Algorithm~\ref{alg: block Jacobi}) on $\M{G}^T\M{G}$ implicitly,\footnote{In general, we prefer not to form $\M{G}^T\M{G}$ explicitly since $\kappa_2(\M{G}^T\M{G}) = \kappa_2(\M{G})^2$, where $\kappa_2(\cdot)$ is the spectral norm condition number.} forming each submatrix on the fly from (block) columns of $\M{G}$ before diagonalizing them and using the resulting eigenvector matrix to perform a column update. Each of these updates forces the corresponding columns to be mutually orthogonal; as a result, $\M{G}$ gradually converges to $\M{U} {\bf \Sigma}$, where $\M{U}$ is a matrix of left singular vectors and ${\bf \Sigma}$ is a diagonal matrix of corresponding singular values, with the associated right singular vectors accumulated in $\M{V}$. Since in this approach orthogonal transformations are applied only to the columns of $\M{G}$ -- as opposed to the columns \textit{and} rows as in standard Jacobi -- Algorithm~\ref{alg:Jacobi_SVD} is usually referred to as ``one-sided" Jacobi SVD. For a discussion of other versions, see \cite[Section 5]{arxiv_manuscript}. 

\begin{algorithm}[h]
\caption{One-Sided Jacobi SVD}
\label{alg:Jacobi_SVD} \begin{algorithmic}[1]
\Require $\mathbf{G} \in {\mathbb R}^{m \times n}$ with $m \geq n$ and columns partitioned as follows (for $b$ that divides $n$):
\[ \mathbf{G}(:,I) = \mathbf{G}(:, (I-1)b+1:Ib) \; \; \; \text{for} \; \; \; 1 \leq I \leq n/b. \]
 
\Ensure On output, $\mathbf{G} = \mathbf{U} {\bm \Sigma}\mathbf{V}^T$ is an approximate (reduced) singular value decomposition of $\mathbf{G}$, with $\mathbf{U} \in {\mathbb R}^{m \times n}$ and ${\bm \Sigma},\mathbf{V} \in {\mathbb R}^{n \times n}$. ${\bm \Sigma}$ is diagonal and contains approximate singular values of $\mathbf{G}$.
\algrule
\setstretch{1.1}
\Function{$[\mathbf{U},{\bm \Sigma},\mathbf{V}]=$
Jacobi\_SVD}{$\mathbf{G}$} \State $\mathbf{V}=\mathbf{I}_n$
\Repeat \For{all $1 \leq I < J \leq n/b $ in some order,} \State $\mathbf{\hat{A}}= \mathbf{G}(:, [I,J])^T \mathbf{G}(:,[I,J])$ \Comment{$\mathbf{\hat{A}}$ is a $2b \times 2b$ submatrix of $\mathbf{G}^T\mathbf{G}$ formed on the fly} \If{$\mathbf{\hat{A}}$ is far
enough from diagonal} \State $\mathbf{\hat{A}}=\mathbf{\hat{V}}\mathbf{\hat{D}}\mathbf{\hat{V}}^{T}$
is an eigendecomposition of $\mathbf{\hat{A}}$ 
\State Multiply block columns $I$ and $J$ of $\mathbf{G}$ by $\mathbf{\hat{V}}$
\State Multiply block columns $I$ and
$J$ of $\mathbf{V}$ by $\mathbf{\hat{V}}$ \EndIf \EndFor \Until{the columns of $\mathbf{G}$ are nearly orthogonal} 
\State ${\bm \Sigma} = \text{diag}(||\mathbf{G}(:,1)||_2, \ldots, ||\mathbf{G}(:,n)||_2)$ 
\State $\mathbf{U} = \mathbf{G}{\bm \Sigma}^{\dagger}$ \EndFunction \end{algorithmic} 
\end{algorithm} 

\indent Algorithm~\ref{alg:Jacobi_SVD} can be parallelized in essentially the same way as standard Jacobi. Note in particular that the submatrices of $\M{G}^T\M{G}$ corresponding to two pairs of block-column indices $(I,J)$ and $(I',J')$ in line 5 will satisfy the block-independence criteria from Section~\ref{section: 2D_parallel} if $\left\{ I,J \right\} \cap \left\{ I', J' \right\} = \emptyset$. Hence, we can once again break each sweep into several serial steps (according to the usual anti-diagonal ordering) each of which handles several (block) column pairs of $\M{G}$ in parallel. 2D parallel versions of one-sided Jacobi SVD rooted in  this observation have already been explored in the literature \cite{abdelfattah2026efficientbatchsolversingular}. Their advantages mirror those of the 2D algorithm from Section~\ref{section: 2D_parallel}; they are fairly easy to implement and spend most of their time on matrix multiplication (and hence are amenable to running on a GPU).  \\
\indent We focus on extending this work to the 2.5D setting, specifically with the goal of achieving the improved bandwidth/latency bounds from Section~\ref{section: 2.5D_parallel} for the SVD. To do this, we once again consider block-distributing the input matrix $\M{G}$ (as well as the matrix $\M{V}$ of approximate right singular vectors) over a 2.5D processor grid, this time of dimension $\sqrt{\frac{mP}{nc}} \times \sqrt{\frac{nP}{mc}} \times c$. In this case, each layer of the 2.5D grid is (potentially) nonsquare with the same aspect ratio as $\M{G}$. We assume here that $1\leq c \leq \left(\frac{n}{m} \cdot P\right)^{1/3}$ so that the third dimension of the grid remains the smallest, which also implies $c \leq P^{1/3}$ since $m \geq n$. \\
\indent By constructing the grid in this way, we can store $\M{G}$ on each layer in \textit{square} subblocks, with each individual processor owning a block of size $b \times b$ for $b = \sqrt{\frac{mnc}{P}}$, which we assume is an integer. This is primarily done for convenience -- e.g., for compatibility with subroutines like Algorithm~\ref{alg:2.5DMM}, which assumes that matrices are broken into square blocks. An additional upshot is that it allows us to re-use the notation/terminology from the previous sections. In particular, we have the following:
\begin{enumerate}
    \item  As in Section~\ref{section: 2.5D_parallel}, a \textit{fine tile} of $\M{G}$ or $\M{V}$ is a $b \times b$ submatrix owned by an individual processor. 
    \item Like 2.5D parallel Jacobi, our 2.5D Jacobi SVD algorithm operates primarily at the level of larger \textit{coarse} tiles of $\M{G}$ and $\M{V}$, which are aggregated from neighboring fine tiles and have size $B \times B$ for $B = \mu b$, where $\mu$ is a batching parameter satisfying $1 \leq \mu \leq \sqrt{\frac{nP}{mc}}$.
    \item  Coarse tiles are labeled by pairs of indices $(I,J)$ for $1 \leq I \leq m/B$ and $1 \leq J \leq n/B$, with $\M{G}_{IJ}$ and $\M{V}_{IJ}$ following \eqref{eqn: A_subblock} but with block size $B$. We additionally use $\M{G}_J = \M{G}(:,(J-1)B+1:JB)$ to refer to the $J$-th \textit{coarse column} of $\M{G}$.
    \item Processor labels extend naturally from Section~\ref{section: 2.5D_parallel}. That is, individual processors are labeled Proc$\langle p,q,r \rangle$ for $1 \leq p \leq \sqrt{\frac{mP}{nc}}$, $1 \leq q \leq \sqrt{\frac{nP}{mc}}$, and $1 \leq r \leq c$, while larger processor tiles are labeled ProcTile$(I,J,k)$ as in \eqref{eqn: processor_tile}, this time with $1 \leq I \leq m/B$, $1 \leq J \leq n/B$ and $1 \leq k \leq c$. Following \eqref{eqn: proc_slab}, we again use the label ``ProcSlab" to refer to larger collections of processors (e.g., several processor tiles across multiple layers).
\end{enumerate}

\indent The main difference between standard Jacobi and Jacobi SVD is that, in the SVD case, each subproblem to be diagonalized must be computed from columns of $\M{G}$ instead of simply read from the input matrix. In our 2.5D Jacobi SVD algorithm, each of these submatrices is obtained by selecting two (coarse) columns $\M{G}_I$ and $\M{G}_J$ and explicitly computing 
\begin{equation}\label{eqn: SVD_subproblem}
    \hat{\M{A}} = \M{G}(:,[I,J])^T\M{G}(:,[I,J]) = \begin{pmatrix} 
        \M{G}_I^T\M{G}_I & \M{G}_I^T\M{G}_J \\
        \M{G}_J^T \M{G}_I & \M{G}_J^T\M{G}_J 
    \end{pmatrix} \in {\mathbb R}^{2B \times 2B}
\end{equation}
over the processor grid. Taking $\M{G}_I^T \M{G}_I$ as an example, we can do this efficiently by breaking the product into a sum 
 \begin{equation}
     \M{G}_I^T\M{G}_I = \sum_{K=1}^{m/B} \M{G}_{KI}^T\M{G}_{KI}.
 \end{equation}
 where each $\M{G}_{KI}^T\M{G}_{KI}$ is a $B\times B$ multiplication that can be handled by a unique set of processors, recalling that $\M{G}_{KI}$ is initially owned by ProcSlab$(K,I,1)$, via Algorithm~\ref{alg:2.5DMM}. The same can be done for $\M{G}_I^T\M{G}_J$ and $\M{G}_J^T\M{G}_J$, after possibly swapping tiles of $\M{G}$ if necessary. $\M{G}_J^T\M{G}_I$, meanwhile, can be obtained by simply taking the transpose of $\M{G}_I^T\M{G}_J$. \\ \indent At the conclusion of these 2.5D matrix multiplications, each intermediate product is stored in a processor tile belonging to (coarse) column $I$ or $J$ of the processor grid's top layer (following the output assumptions of Algorithm~\ref{alg:2.5DMM}). We can then perform a reduction along these columns to obtain $\hat{\M{A}}$ on ProcSlab$([I,J],[I,J],1)$. Once this is accomplished, our parallel Jacobi SVD algorithm can proceed exactly as Algorithm~\ref{alg:2.5D_Parallel_Jacobi} -- e.g., using multiple processors to diagonalize $\hat{\M{A}}$ via 2D parallel Jacobi and subsequently using 2.5D matrix multiplication for the column updates. Note that the latter requires strictly \textit{less} work in the SVD setting since we only need to multiply $\M{G}$ and $\M{V}$ on the right. In addition, we once again introduce a utilization factor $ \eta \in [2\mu/\sqrt{\frac{mcP}{n}},1]$ to determine the fraction of the $2 \mu \sqrt{\frac{mcP}{n}}$ available processors in ProcSlab$(:,[I,J],:)$ to use for each call to 2D parallel Jacobi. As in Algorithm~\ref{alg:2.5D_Parallel_Jacobi}, the lower bound $\eta = 2 \mu / \sqrt{\frac{mcP}{n}}$ corresponds to using only the $4\mu^2$ processors that initially own $\hat{\M{A}}$. 
 
\begin{algorithm}
\caption{2.5D Parallel Jacobi SVD}
\label{alg:2.5D_Jacobi_SVD}
\textbf{Input:} (1) $\M{G} \in {\mathbb R}^{m \times n}$ with $m \geq n$. \\
\phantom{Input2 } (2) $\M{V} \in {\mathbb R}^{n \times n}$ an orthogonal matrix of approximate right singular vectors (optional). \\
\phantom{Input2 } (3) A set $\mathfrak{P}$ of $P$ processors, arranged in a $\sqrt{\frac{mP}{nc}} \times \sqrt{\frac{nP}{mc}} \times c$ grid for replication factor  \\ 
\phantom {Input2 (1) } $1 \leq c \leq \left( \frac{n}{m} \cdot P \right)^{1/3}$ and labeled Proc$\langle p,q,r\rangle$ for $1 \leq p \leq \sqrt{\frac{mP}{nc}}$, $1 \leq q\leq \sqrt{\frac{nP}{mc}}$ and $1 \leq r \leq c$. \\
\phantom{Input2 } (4) $\mu$ a batching parameter satisfying $ c\leq \mu \leq \sqrt{\frac{nP}{mc}}. $\\
\phantom{Input2 } (5) $\eta \in [2\mu/\sqrt{\frac{mcP}{n}}, 1]$ a utilization factor. \\
\phantom{Input2 } (6) Lists $L$ and $\hat{L}$ of, respectively, $\sqrt{\frac{nP}{mc}}/\mu$ and $\sqrt{2\eta \mu \sqrt{\frac{mcP}{n}}}$ groups of pairwise disjoint indices $(I,J)$  \\
\phantom{Input2 (4) } (e.g.,  generated by two calls to Algorithm~\ref{alg:anti_diagonal_list}).

\vspace{2mm}
\textbf{Requires:} Initially, $\M{G}$ is stored on the top layer of the grid, with the coarse tile
\[ \Mb{G}{IJ}=\Me{G}{(I{-}1)B{+}1:IB,(J{-}1)B{+}1:JB} \; \; \; \text{for} \; \; \; B = \mu \sqrt{\frac{mnc}{P}} \]
block-distributed over the coarse processor tile ProcTile$(I,J,1)$ for $1 \leq I \leq m/B$ and $1 \leq J \leq n/B$ so that each individual processor owns a $b \times b$ fine tile of $\M{G}$ for $b =  \sqrt{mnc/P}$. We denote the $J$-th coarse column block of $\M{G}$ as $\M{G}_J = \M{G}(:, (J-1)B+1:JB)$. If omitted, $\M{V}$ is taken to be the identity matrix; since it is $n \times n$, $\M{V}$ is stored only on the square subgrid corresponding to coarse indices $1 \leq K,J \leq n/B$, with $\M{V}_{KJ}$ owned by ProcTile$(K,J,1)$. 

\vspace{2mm}
\textbf{Ensure:}
On output, one sweep of one-sided Jacobi SVD has been applied to $\M{G}$ and $\M{V}$. 
\algrule
\setstretch{1.2}
\begin{algorithmic}[1]
\Function{$[\M{V}, \M{G}]=$2.5D\_Jacobi\_SVD}{$\M{G}$, $\M{V}$, $\mathfrak{P}$, $\mu$, $\eta$, $L$, $\hat{L}$} 
\For{each group $g\in L$}  
\For{each pair $(I,J)\in g$ \textbf{in parallel}} 
\vspace{1mm}
\State $\hat{\M{A}} = \begin{pmatrix} \M{G}_I^T\M{G}_I & \M{G}_I^T \M{G}_J \\ \M{G}_J^T\M{G}_I & \M{G}_J^T \M{G}_J \end{pmatrix}$ \Comment{Subproblem to be formed on ProcSlab$([I,J],[I,J],1)$.}
\vspace{1mm}
\For{$K = 1:m/B$ \textbf{in parallel}} \label{line: start_forming_subproblem}
    \State ProcTile$(K,I,1)$ and ProcTile$(K,J,1)$: Exchange $\M{G}_{KI} \leftrightarrow \M{G}_{KJ}$ and, if $K < n/B$, $\M{V}_{KI} \leftrightarrow \M{V}_{KJ}$    
    \State ProcTile$(K,I,1)$: Swap off-diagonal blocks to compute $\M{G}_{KI}^T$
    \State \hspace{2.5cm} $\M{G}_{KI}^T\M{G}_{KI}= $ \Call{2.5D\_MM}{$\M{G}_{KI}^T$, $\M{G}_{KI}$, ProcSlab$(K,I,:)$}
    \State \hspace{2.5cm} $\M{G}_{KI}^T\M{G}_{KJ} = $ \Call{2.5D\_MM}{$\M{G}_{KI}^T$, $\M{G}_{KJ}$, ProcSlab$(K,I,:)$}
    \State ProcTile$(K,J,1)$: Swap off-diagonal blocks to compute $\M{G}_{KJ}^T$
    \State \hspace{2.5cm} $\M{G}_{KJ}^T\M{G}_{KJ} = $ \Call{2.5D\_MM}{$\M{G}_{KJ}^T$, $\M{G}_{KJ}$, ProcSlab$(K,J,:)$}
    
\EndFor
\State ProcTile$(I,I,1)$: Reduce sum $\M{G}_{KI}^T\M{G}_{KI}$ along ProcSlab$(:,I,1)$ to obtain $\M{G}_I^T\M{G}_I$
\State \hspace{2.4cm} Reduce sum $\M{G}_{KI}^T\M{G}_{KJ}$ along ProcSlab$(:,I,1)$ to obtain $\M{G}_I^T\M{G}_J$
\State \hspace{2.4cm} Send $\M{G}_I^T\M{G}_J$ and $\M{G}_J^T\M{G}_I$ to ProcTile$(I,J,1)$ and ProcTile$(J,I,1)$
\State ProcTile$(J,J,1)$: Reduce sum $\M{G}_{KJ}^T\M{G}_{KJ}$ along ProcSlab$(:,J,1)$ to obtain $\M{G}_J^T\M{G}_J$ \label{line: end_forming_subproblem}
\State ProcSlab$([I,J],[I,J],1)$: Scatter $\hat{\M{A}}$ to $\hat{\mathfrak{P}}$, an $\eta$ fraction of processors in ProcSlab$(:,[I,J],:)$
\State Initialize $\hat{\M{V}}$ as the $2B \times 2B$ identity matrix over $\hat{\mathfrak{P}}$
\Repeat
\State $[\hat{\M{V}},\hat{\M{A}}]$ = \Call{2D\_Parallel\_Jacobi}{$\hat{\M{A}}$, $\hat{\M{V}}$, $\hat{\mathfrak{P}}$, $\hat{L}$}
\Until{converged or a maximum number of sweeps is reached}

\State \Comment{Continued on next page $\rightarrow $} \algstore{bkbreak} \end{algorithmic} 
\end{algorithm}

\begin{algorithm}
\begin{algorithmic} \algrestore{bkbreak}

\State \Comment{Continued from previous page}

\State Gather $\hat{\M{V}}$ from $\hat{\mathfrak{P}}$ back to ProcSlab$([I,J],[I,J],1)$
\State $\hat{\M{V}} = \begin{pmatrix} \hat{\M{V}}_{11} & \hat{\M{V}}_{12} \\ \hat{\M{V}}_{21} & \hat{\M{V}}_{22} \end{pmatrix} $ \Comment{Break $\hat{\M{V}}$ into $B \times B$ blocks}
\State ProcTile$(J,I,1)$: Send $\hat{\M{V}}_{21}$ to ProcTile$(I,I,1)$
\State ProcTile$(I,J,1)$: Send $\hat{\M{V}}_{12}$ to ProcTile$(J,J,1)$
\State ProcTile$(I,I,1)$: Broadcast $[\hat{\M{V}}_{11}, \hat{\M{V}}_{21}]$ to ProcTile$(K,I,1)$ for all $K \neq I$.
\State ProcTile$(J,J,1)$: Broadcast $[\hat{\M{V}}_{12}, \hat{\M{V}}_{22}]$ to ProcTile$(K,J,1)$ for all $K \neq J$.
\For{$K = 1:m/B$ \textbf{in parallel}} \Comment{Update $\M{G}$ and $\M{V}$}
    \State ProcTile$(K,I,1)$: $\M{M}_1 = $ \Call{2.5D\_MM}{$\M{G}_{KI}$,$\hat{\M{V}}_{11}$, ProcSlab$(K,I,:)$}
    \State \hspace{2.5cm} $\M{M}_2 = $ \Call{2.5D\_MM}{$\M{G}_{KJ}$,$\hat{\M{V}}_{21}$, ProcSlab$(K,I,:)$} 
    \State \hspace{2.5cm} $\M{G}_{KI} \gets \M{M}_1 + \M{M}_2$
    \State \hspace{2.5cm} \textbf{if} $K \leq n/B$ \textbf{then}
    \State \hspace{3.05cm} $\M{M}_1 = $ \Call{2.5D\_MM}{$\M{V}_{KI}$,$\hat{\M{V}}_{11}$, ProcSlab$(K,I,:)$}
    \State \hspace{3.05cm} $\M{M}_2 = $ \Call{2.5D\_MM}{$\M{V}_{KJ}$,$\hat{\M{V}}_{21}$, ProcSlab$(K,I,:)$} 
    \State \hspace{3.05cm} $\M{V}_{KI} \gets \M{M}_1 + \M{M}_2$
    \State \hspace{2.5cm} \textbf{end if}
    \vspace{2mm}
            
    \State ProcTile$(K,J,1)$: $\M{M}_1 = $ \Call{2.5D\_MM}{$\M{G}_{KI}$,$\hat{\M{V}}_{21}$, ProcSlab$(K,J,:)$}
    \State \hspace{2.5cm} $\M{M}_2 = $ \Call{2.5D\_MM}{$\M{G}_{KJ}$,$\hat{\M{V}}_{22}$, ProcSlab$(K,J,:)$} 
    \State \hspace{2.5cm} $\M{G}_{KJ} \gets \M{M}_1 + \M{M}_2$
    \State \hspace{2.5cm} \textbf{if} $K \leq n/B$ \textbf{then}
    \State \hspace{3.05cm} $\M{M}_1 = $ \Call{2.5D\_MM}{$\M{V}_{KI}$,$\hat{\M{V}}_{21}$, ProcSlab$(K,J,:)$}
    \State \hspace{3.05cm} $\M{M}_2 = $ \Call{2.5D\_MM}{$\M{V}_{KJ}$,$\hat{\M{V}}_{22}$, ProcSlab$(K,J,:)$} 
    \State \hspace{3.05cm} $\M{V}_{KJ} \gets \M{M}_1 + \M{M}_2$
    \State \hspace{2.5cm} \textbf{end if}
\EndFor
\EndFor
\EndFor
\EndFunction

\end{algorithmic} 
\end{algorithm}

\indent In this context, each sweep of our Jacobi SVD algorithm loops through pairs of columns $(I,J)$ with  $1 \leq I < J \leq n/B = \sqrt{\frac{nP}{mc}}/\mu$. As noted above, we can use the same anti-diagonal ordering from the previous sections -- e.g., generated by Algorithm~\ref{alg:anti_diagonal_list} with input $\sqrt{\frac{nP}{mc}}/\mu$ -- to break each sweep into several serial steps. Algorithm~\ref{alg:2.5D_Jacobi_SVD} presents our 2.5D version of one-sided Jacobi SVD, which parallelizes Algorithm~\ref{alg:Jacobi_SVD} in this way and calls 2D parallel Jacobi and 2.5D matrix multiplication as discussed above. Following Algorithms~\ref{alg:High_Level_Parallel_Jacobi} and~\ref{alg:2.5D_Parallel_Jacobi}, it applies a single sweep to the input matrix. Accordingly, we omit the post-processing steps of Algorithm~\ref{alg:Jacobi_SVD} (i.e., lines 13 and 14). Convergence can be guaranteed over multiple sweeps if a pivoting step is included, though we once again leave this out for brevity. Our main contribution here is a general complexity bound for this algorithm that covers any choice of batching parameter $\mu$ and utilization factor $\eta$. 

\begin{theorem}\label{thm: 2.5D_SVD_comp} 
    One sweep of 2.5D parallel Jacobi SVD (i.e., Algorithm~\ref{alg:2.5D_Jacobi_SVD}) has complexity
    $$
    \aligned 
        \alpha \cdot O \left( X \cdot \sqrt[4]{\frac{n}{m}}\cdot \log \left( \eta \mu \sqrt{ \frac{mcP}{n}} \right) + \frac{\sqrt{mP/nc}}{\mu} \log \left( \frac{\sqrt{mP/nc}}{\mu} \right) \right) &+ \beta \cdot O \left( \frac{\sqrt[4]{mn^7}}{X} + \sqrt{\frac{mn^3}{cP}} \right) \\
        & \hspace{-5cm} + \gamma \cdot O\left( \frac{\sqrt{mn^5}}{X^2}+ \frac{mn^2}{P}\right) 
    \endaligned
    $$
    for $X = \sqrt[4]{\frac{\eta^2  P^3}{\mu^2c}}$, assuming that $O(1)$ sweeps of 2D parallel Jacobi are applied to each subproblem.
\end{theorem}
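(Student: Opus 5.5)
The proof will follow the proof of Theorem~\ref{thm: 2.5D_parallel_comp} almost line by line. We bound the cost of handling a single column pair $(I,J)\in g$, observe that each processor participates in work for $O(1)$ pairs per group, and multiply by the number $|L| = \sqrt{nP/(mc)}/\mu$ of serial steps. The new parameters are as follows. Each layer has $\sqrt{mP/(nc)}$ coarse-tile-scaled rows, the fine tile size is $b=\sqrt{mnc/P}$, and $B=\mu b$. Each call to 2D parallel Jacobi uses $\hat P := 2\eta\mu\sqrt{mcP/n}$ processors.

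Per pair, the items are handled as follows.
\begin{enumerate}
\item \emph{Forming the subproblem (lines~\ref{line: start_forming_subproblem}--\ref{line: end_forming_subproblem}).} This step is new. It consists of an $O(1)$-word exchange of fine tiles, followed by at most three $B\times B$ 2.5D multiplications on a $\mu\times\mu\times c$ grid. By Theorem~\ref{thm: 2.5d_MM} these cost $\alpha\,O(\mu/c+\log c)+\beta\,O(B^2/(\mu c))+\gamma\,O(B^3/(\mu^2c))$. A reduction along a coarse column of $m/B$ tiles follows, where each processor reduces one $b\times b$ block. Per Table~\ref{tab:mpi_routines}, this costs $\alpha\,O(\log(m/B))+\beta\,O(b^2)+\gamma\,O(b^2)$, plus a point-to-point send of $O(b^2)$ words.
\item \emph{Scatter and gather.} As in \eqref{eqn: scatter_cost}, with $\eta\sqrt{cP}/\mu$ replaced by $\hat P/\mu^2$.
\item \emph{Inner diagonalization.} Theorem~\ref{thm: parallel_comp} applied with $n\to 2B$ and $P\to\hat P$ gives $\alpha\,O(\sqrt{\hat P}\log\hat P)+\beta\,O(B^2/\sqrt{\hat P})+\gamma\,O(B^3/\hat P)$.
\item \emph{Broadcast of $\hat{\M V}$ and the right updates.} The broadcast goes to $m/B$ tiles at cost $\alpha\,O(\log(m/B))+\beta\,O(b^2)$. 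The right updates use $O(1)$ further 2.5D multiplications, with the same cost as in item 1.
\end{enumerate}
As before, $\mu\ge c$ gives $B^2/(\mu c)\ge b^2$. Also, $\hat P\ge 4\mu^2$ makes $\sqrt{\hat P}\log\hat P$ dominate $\mu/c+\log c$ and the scatter logarithm. The remaining logarithm is $\log(m/B)=\log(\sqrt{mP/nc}/\mu)$.

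Next, we multiply by $|L|=\sqrt{nP/(mc)}/\mu$ and substitute $B^2=\mu^2 mnc/P$ and $\hat P=2\eta\mu\sqrt{mcP/n}$. This is routine algebra that I would check term by term.
\begin{itemize}
\item \emph{Latency.} $|L|\sqrt{\hat P}\asymp \sqrt{\eta}\,P^{3/4}\mu^{-1/2}c^{-1/4}(n/m)^{1/4}=X\sqrt[4]{n/m}$. The reduction and broadcast logarithms contribute $|L|\log(\cdot)$. This is at most $\frac{\sqrt{mP/nc}}{\mu}\log(\cdot)$ since $m\ge n$, which matches the stated form.
\item \emph{Bandwidth.} $|L|B^2/\sqrt{\hat P}$ simplifies to $\sqrt[4]{mn^7}/X$, and $|L|B^2/(\mu c)=\sqrt{mn^3/(cP)}$.
\item \emph{Arithmetic.} $|L|B^3/\hat P$ gives $\sqrt{mn^5}/X^2$, and $|L|B^3/(\mu^2c)=\sqrt{nP/(mc)}\,\mu\,(mnc/P)^{3/2}/(\mu^2 c)$.
\end{itemize}

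\textbf{Main obstacle.} The last arithmetic term reduces naively to $mn^2/\sqrt{\ldots}$-type expressions. I must confirm it equals $O(mn^2/P)$ once I account for the fact that each pair's updates run over $m/B$ coarse tiles in parallel, with each tile on its own slab. So the per-processor flops per step are $B^3/(\mu^2 c)$, not $m/B$ times that. This gives $|L|B^3/(\mu^2c)=\mu n c\, b\,(mnc/P)\cdot(\ldots)$; I expect it to simplify to $mn^2/P$ after using $|L|\mu b=n$ and $b^2=mnc/P$. In fact $|L|\,B^3/(\mu^2c)=|L|\mu b\cdot b^2/c=n\cdot mn/P$. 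The bandwidth term $\sqrt{mn^3/(cP)}$ arises analogously as $n\,b/c\cdot\ldots$, and I would verify it the same way. The reduction's $\gamma\,O(b^2)$ term is dominated by these. Summing the three contributions then gives the stated bound.
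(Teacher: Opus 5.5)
Your proposal is correct and follows essentially the same route as the paper's proof. You bound the per-pair costs of each stage: forming $\hat{\M{A}}$ with 2.5D multiplications plus a column reduction, the scatter/gather, the inner 2D Jacobi call via Theorem~\ref{thm: parallel_comp}, and the $\hat{\M{V}}$ broadcast and 2.5D right updates. You then simplify with $\hat P\ge 4\mu^2$ and $\mu\ge c$, and multiply by $|L|=n/B=\sqrt{nP/(mc)}/\mu$. Your remark that this gives the smaller factor $\sqrt{nP/(mc)}/\mu$ on the logarithmic latency term, which the stated $\sqrt{mP/(nc)}/\mu$ dominates since $m\ge n$, is right. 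The only slip is the phrase ``$O(1)$-word exchange,'' which should read $O(b^2)$ words in $O(1)$ messages.
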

\begin{proof}
    Following the proof of Theorem~\ref{thm: 2.5D_parallel_comp}, we start by cataloging the work associated with a single index pair $(I,J) \in g$.
    \begin{enumerate}

        \item Lines 5--16: Processors in coarse columns $I$ and $J$ collaborate to form
        $\hat{\M{A}}=\M{G}(:,[I,J])^T\M{G}(:,[I,J])$
        on ProcSlab$([I,J],[I,J],1)$. This requires computing the $B\times B$ products
        $\M{G}_{KI}^T\M{G}_{KI}$, $\M{G}_{KI}^T\M{G}_{KJ}$, and $\M{G}_{KJ}^T\M{G}_{KJ}$ via 2.5D matrix multiplication over a $\mu\times\mu\times c$ subgrid. By Theorem~\ref{thm: 2.5d_MM}, the cost (per-processor) of this operation is
        \begin{equation}\label{eqn: 2.5D_MM_cost}
            \alpha\,O\!\left(\frac{\mu}{c}+\log c\right)
            +
            \beta\,O\!\left(\frac{B^2}{\mu c}\right)
            +
            \gamma\,O\!\left(\frac{B^3}{\mu^2c}\right).
        \end{equation}
        Since the reduction of the partial products along the row index has per-processor complexity
        \begin{equation}\label{eqn: 2.5D_MM_cost2}
        \alpha\,O(\log(m/B))+\beta\,O(b^2)+\gamma\,O(b^2),
        \end{equation}
        the total cost of this step is given by \eqref{eqn: 2.5D_MM_cost}
        
        \item Lines 17-24: $\hat{\M{A}}$ is diagonalized by $2 \eta \mu \sqrt{\frac{mcP}{n}}$ processors via 2D parallel Jacobi. Theorem~\ref{thm: parallel_comp} implies that the corresponding call to Algorithm~\ref{alg:High_Level_Parallel_Jacobi} has complexity
        \begin{equation}
            \alpha \cdot O \left( \sqrt{ \eta \mu \sqrt{ \frac{mcP}{n}}} \log \left( \eta \mu \sqrt{ \frac{mcP}{n}} \right) \right) + \beta \cdot O \left( \frac{B^2}{\sqrt{ \eta \mu \sqrt{\frac{mcP}{n}}}} \right) + \gamma \cdot O \left( \frac{B^3}{ \eta\mu  \sqrt{\frac{mcP}{n}}} \right)
        \end{equation}
        if only $O(1)$ sweeps are executed. Following the same argument made in the proof of Theorem~\ref{thm: 2.5D_parallel_comp}, the communication associated with the initial scatter and subsequent gather operations has bandwidth/latency cost
        \begin{equation}
            \alpha \cdot O \left( \log \left( \frac{\eta \sqrt{mcP/n}}{\mu} \right)\right) + \beta \cdot O \left( b^2 \right).
        \end{equation}

        \item Lines 26 and 27: Each processor in ProcTile$(J,I,1)$ and ProcTile$(I,J,1)$ sends its (fine) block of $\hat{\M{V}}$ to one processor in either ProcTile$(I,I,1)$ or ProcTile$(J,J,1)$. The cost of this operations is $\alpha \cdot O(1) + \beta \cdot O(b^2)$.

        \item Lines 28 and 29: Each processor in ProcTile$(I,I,1)$ and ProcTile$(J,J,1)$ broadcasts two $b \times b$ blocks of $\hat{\M{V}}$ to $O \left( \sqrt{\frac{mP}{nc}}/\mu \right)$ processors (one in each coarse processor tile in the same column). Following Table~\ref{tab:mpi_routines}, each of these broadcasts has bandwidth/latency cost
        \begin{equation}
            \alpha \cdot O \left( \log \left(\frac{\sqrt{mP/nc}}{\mu} \right)\right) + \beta \cdot O \left(b^2 \right)
        \end{equation}

        \item Lines 30-43: Each processor in (coarse) processor columns $I$ and $J$ participates in at most four 2.5D matrix multiplications, each of size $B \times B$ done with $2\mu^2c$ processors. The cost of this operation is given by \eqref{eqn: 2.5D_MM_cost} and dominates the associated block-wise matrix addition. 
    \end{enumerate}

    In total, we have the following for each $(I,J) \in g$:
    \begin{equation}\label{eqn: separate_cost_SVD}
        \aligned 
          \text{Latency:}& \; \; \alpha \cdot O\left( \log \left( \frac{\eta \sqrt{mcP/n}}{\mu} \right) + \sqrt{\eta \mu \sqrt{\frac{mcP}{n}}} \log \left( \eta \mu \sqrt{\frac{mcP}{n}} \right) + \log \left( \frac{\sqrt{mP/nc}}{\mu} \right) + \frac{\mu}{c} + \log c \right).  \\
          \text{Bandwidth:}& \; \; \beta \cdot O \left( b^2 + \frac{B^2}{\sqrt{\eta \mu \sqrt{mcP/n}}} + \frac{B^2}{\mu c} \right). \\
          \text{Arithmetic:}& \; \; \gamma \cdot O \left( \frac{B^3}{\eta \mu \sqrt{mcP/n}} + \frac{B^3}{\mu^2 c} \right).
          \endaligned 
      \end{equation}
      As in the proof of Theorem~\ref{thm: 2.5D_parallel_comp}, the lower bounds $\eta \geq 2\mu/\sqrt{\frac{mcP}{n}}$ and $\frac{B^2}{\mu c} \geq b^2$ allow us to simplify each of these complexities to obtain a cumulative cost of
      \begin{equation}\label{eqn: one_step_SVD}
      \aligned   
      \alpha \cdot O\left( \sqrt{\eta \mu \sqrt{\frac{mcP}{n}}} \log \left( \eta \mu \sqrt{\frac{mcP}{n}} \right)  + \log \left( \frac{\sqrt{mP/nc}}{\mu} \right) \right) & + \beta \cdot O \left( \frac{B^2}{\sqrt{\eta \mu \sqrt{mcP/n}}} + \frac{B^2}{\mu c} \right) \\
      & \hspace{-4cm} +  \gamma \cdot O \left( \frac{B^3}{\eta \mu \sqrt{mcP/n}} + \frac{B^3}{\mu^2 c} \right).
      \endaligned 
    \end{equation}
    In one-sided Jacobi SVD -- which applies only (block) column updates to the input matrix as opposed to row \textit{and} column updates -- a single processor participates in the computation for at most one $(I,J) \in g$. Hence, \eqref{eqn: one_step_SVD} bounds the complexity of one parallel step of Algorithm~\ref{alg:2.5D_Jacobi_SVD}. The result now follows from the fact that there are $|L| = \sqrt{\frac{nP}{mc}}/\mu$ of these steps in total (and recalling $B = \mu \sqrt{\frac{mnc}{P}}$).
\end{proof} \\

\subsection{Latency and Bandwidth Optimality}
Given the similarity of the complexity bounds for 2.5D parallel Jacobi and 2.5D parallel Jacobi SVD and noting in particular that  $X$ is the same in Theorem~\ref{thm: 2.5D_parallel_comp} and Theorem~\ref{thm: 2.5D_SVD_comp},  much of the discussion of parameter optimization from the previous section carries over here. That is, for fixed $\mu$, taking $\eta$ at its lower feasible endpoint$\eta = 2 \mu / \sqrt{\frac{mcP}{n}}$, which again corresponds to using only the $4 \mu^2$ processors in ProcSlab$([I,J],[I,J],1)$ to diagonalize $\hat{\M{A}}$, yields a ``latency optimal" implementation, whose sweep complexity is 
\begin{equation}
     \aligned   
      \alpha \cdot O\left( \sqrt{\frac{nP}{mc}} \log(2\mu^2)  + \frac{\sqrt{mP/nc}}{\mu} \log \left( \frac{\sqrt{mP/nc}}{\mu} \right) \right) & + \beta \cdot O \left( \sqrt{\frac{mn^3}{P/c}} \right) + \gamma \cdot O \left( \frac{mn^2}{P/c} \right).
      \endaligned 
\end{equation}
As in the last section, minimizing latency comes at the cost of both bandwidth \textit{and} arithmetic. Setting $\eta = \Theta(\mu \sqrt{c/P})$, with the implicit constant chosen so that $\eta$ is feasible, preserves the optimal arithmetic cost for every $m \geq n$. The arithmetic term in Theorem~\ref{thm: 2.5D_SVD_comp} itself only requires $\eta = \Omega\!\left(\mu \sqrt{nc/(mP)}\right)$, in addition to the standing feasibility bound; hence, the displayed choice has the smallest latency among arithmetic-optimal versions when $m = \Theta(n)$, while remaining a valid choice for taller matrices. In this case, the complexity of one sweep of Algorithm~\ref{alg:2.5D_Jacobi_SVD} is 
\begin{equation}\label{eqn: reasonable_latency_optimal}
     \aligned   
      \alpha \cdot O\left( \sqrt{P} \cdot \sqrt[4]{\frac{n}{m}} \cdot \log \left( \mu^2 c \sqrt{\frac{m}{n}} \right) + \frac{\sqrt{mP/nc}}{\mu} \log \left( \frac{\sqrt{mP/nc}}{\mu} \right) \right) & + \beta \cdot O \left( \sqrt[4]{ \frac{mn^7}{P^2}}   + \sqrt{\frac{mn^3}{cP}} \right) \\
      & \hspace{-4cm} + \gamma \cdot O \left( \frac{mn^2}{P} \right).
      \endaligned 
\end{equation}
\indent If $\M{G}$ is sufficiently tall, in particular if $n = O(m/c^2)$, then this version of 2.5D parallel Jacobi SVD also achieves optimal bandwidth cost $\beta \cdot O \left( \sqrt{\frac{mn^3}{cP}} \right)$. In general, we improve the bandwidth cost of Algorithm~\ref{alg:2.5D_Jacobi_SVD} by making $X$ as large as possible. If, for example, $\eta = 1$, then $X = \sqrt[4]{\frac{P^3}{\mu^2 c}}$ and the complexity of each sweep of 2.5D parallel Jacobi SVD is
\begin{equation}
     \aligned   
      \alpha \cdot O\left( \sqrt[4]{\frac{nP^3}{m \mu^2 c}} \cdot \log \left( \mu \sqrt{\frac{mcP}{n}} \right) + \frac{\sqrt{mP/nc}}{\mu} \log \left( \frac{\sqrt{mP/nc}}{\mu} \right) \right) & + \beta \cdot O \left( \sqrt[4]{ \frac{mn^7 \mu^2 c}{P^3}}   + \sqrt{\frac{mn^3}{cP}} \right) \\
      & \hspace{-4cm} + \gamma \cdot O \left( \sqrt{\frac{mn^5 \mu^2 c}{P^3}} + \frac{mn^2}{P} \right).
      \endaligned 
\end{equation}
Bandwidth-optimality is now guaranteed, for example, by choosing $\mu = \Theta\left(\sqrt{\frac{mP}{nc^3}} \right)$, which we note satisfies the assumptions of Algorithm~\ref{alg:2.5D_Jacobi_SVD} if $c^5 \lesssim \frac{mP}{n}$ and $c \gtrsim \frac{m}{n}$. This implies the following analog of Corollary~\ref{cor: bandwidth_optimal_jacobi}.
\begin{corollary}\label{cor: bandwidth_optimal_SVD}
    Suppose we run 2.5D parallel Jacobi SVD (Algorithm~\ref{alg:2.5D_Jacobi_SVD}) with the following parameter choices (which we assume satisfy the necessary assumptions):
    \begin{enumerate}
        \item $\mu = \Theta\left(\sqrt{\frac{mP}{nc^3}}\right)$.
        \item $\eta = 1$, meaning all available processors are utilized in each call to 2D parallel Jacobi.
    \end{enumerate}
    Then the complexity of each sweep is
    \[ \alpha \cdot O \left(\sqrt{\frac{ncP}{m}} \log \left( \frac{mP}{nc}\right) \right) + \beta \cdot O \left( \sqrt{\frac{mn^3}{cP}}\right) + \gamma \cdot O\left(\frac{mn^2}{P} \right), \]
    again assuming that only $O(1)$ sweeps of the 2D algorithm are applied to each subproblem.
\end{corollary}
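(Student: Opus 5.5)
Corollary~\ref{cor: bandwidth_optimal_SVD} follows by substituting the parameter choices into Theorem~\ref{thm: 2.5D_SVD_comp}, in the specialized form already written out above for $\eta = 1$, and simplifying each of the three cost terms in turn. We take $X = \sqrt[4]{P^3/(\mu^2 c)}$ and $\mu = \Theta(\sqrt{mP/(nc^3)})$. This gives $\mu^2 c = \Theta(mP/(nc^2))$, and hence
\[ X = \Theta\left( \sqrt[4]{\frac{nc^2P^2}{m}} \right). \]
We assume throughout the feasibility conditions $c^5 \lesssim mP/n$ and $c \gtrsim m/n$ noted before the corollary, so that $c \leq \mu \leq \sqrt{nP/(mc)}$ holds.

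\emph{Bandwidth and arithmetic.} The first bandwidth term becomes
\[ \frac{\sqrt[4]{mn^7}}{X} = \Theta\left(\sqrt[4]{\frac{m^2n^6}{c^2P^2}}\right) = \Theta\left(\sqrt{\frac{mn^3}{cP}}\right), \]
which matches the second term. The first arithmetic term becomes
\[ \frac{\sqrt{mn^5}}{X^2} = \Theta\left(\sqrt{\frac{m^2 n^4}{c^2P^2}}\right) = \Theta\left(\frac{mn^2}{cP}\right), \]
which is dominated by $mn^2/P$.

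\emph{Latency.} The leading term is $X\sqrt[4]{n/m}\,\log(\mu\sqrt{mcP/n})$, and
\[ X\sqrt[4]{\frac{n}{m}} = \Theta\left(\sqrt[4]{\frac{n^2c^2P^2}{m^2}}\right) = \Theta\left(\sqrt{\frac{ncP}{m}}\right). \]
For the logarithm, $\mu\sqrt{mcP/n} = \Theta(mP/(nc))$, so its argument is the one in the stated bound. The remaining latency term is $\frac{\sqrt{mP/nc}}{\mu}\log(\cdot)$. Here $\sqrt{mP/(nc)}/\mu = \Theta(c)$, so this term is $O(c\log c)$.

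\emph{Main obstacle.} The one non-mechanical step is checking that $c\log c$ is dominated by $\sqrt{ncP/m}\,\log(mP/(nc))$, since $m/n$ may be large. First, $c \leq \sqrt{ncP/m}$ is equivalent to $c \leq nP/m$, which follows from the standing assumption $c \leq (nP/m)^{1/3}$ (because $nP/m \geq 1$). Second, $\log c \lesssim \log(mP/(nc))$ holds because $mP/(nc) \geq P/c \geq c^2$, using $m \geq n$ and $c \leq P^{1/3}$. If we also want to justify the phrase ``bandwidth optimal,'' we note that the bandwidth matches the rectangular 2.5D multiplication bound and that the latency--bandwidth product is $\Theta(n^2)$ up to logarithmic factors, consistent with Theorem~\ref{thm: parallel_lower_bound}. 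That remark is not needed for the corollary itself.
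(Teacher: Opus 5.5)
Your proposal is correct and takes the same route as the paper: substitute $\eta = 1$ and $\mu = \Theta\bigl(\sqrt{mP/(nc^3)}\bigr)$ into the $\eta=1$ specialization of Theorem~\ref{thm: 2.5D_SVD_comp} and simplify each of the three cost terms. Your explicit check that the residual $O(c\log c)$ latency term is dominated by $\sqrt{ncP/m}\,\log\bigl(mP/(nc)\bigr)$ fills in a step the paper leaves implicit.
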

These results echo the bandwidth/latency tradeoff discussed in Section~\ref{section: lower_bound}. 
While we do not include a proof, we conjecture that similar lower bounds apply to one-sided Jacobi SVD.  Indeed, note that the first terms of the latency and bandwidth costs in Theorem~\ref{thm: 2.5D_SVD_comp} always multiply to $n^2$ (up to log factors) regardless of the choice of $\mu$ and $\eta$.

\subsection{Preprocessing with QR}
In practice, the efficiency of one-sided Jacobi SVD can be improved by first computing a reduced QR factorization $\M{G} = \M{Q}\M{R}$ (with or without pivoting) and subsequently running the algorithm on $\M{R}$ (or its transpose). An SVD of $\M{G}$ can then be obtained by multiplying the matrix of left singular vectors of $\M{R}$ by $\M{Q}$; that is, if $\M{R} = \M{U} {\bf \Sigma} \M{V}^T$ then $\M{G} = (\M{Q}\M{U}) {\bf \Sigma} \M{V}^T$. The benefits of doing this are straightforward. If $m \gg n$, then $\M{R}$ is much smaller than $\M{G}$, meaning each sweep of Jacobi will be much cheaper. Moreover, we often see faster convergence from Jacobi if pivoting is used in the QR decomposition (see the discussion in \cite{DrmacVeselic08a}). \\
\indent In the 2.5D setting, this preprocessing step, without pivoting, can be handled via the 2.5D parallel QR algorithm of Solomonik et al. \cite{2.5D_QR_Band_Reduction} (see also \cite[Chapter 6]{Solomonik_thesis}), which similarly runs over a 2.5D processor grid with replication factor $1 \leq c \leq \left( \frac{n}{m} \cdot P \right)^{1/3}$. For brevity we do not state the details of this algorithm, instead simply noting that its complexity is
\begin{equation}\label{eqn: 2.5D_QR_cost}
    \alpha \cdot O \left( \sqrt{\frac{ncP}{m}}\log^2(P) \right) + \beta \cdot O \left( \sqrt{\frac{mn^3}{cP}} + \frac{mn}{P} \right) + \gamma \cdot O \left( \frac{mn^2}{P} \right).
\end{equation}
This follows from \cite[Theorem 3.6]{2.5D_QR_Band_Reduction}, where, in their notation, $c = \left( \frac{nP}{m} \right)^{2 \delta - 1}$ for $\delta \in [1/2, 2/3]$. \\
\indent This expression essentially matches Corollary~\ref{cor: bandwidth_optimal_SVD}. Accordingly, preprocessing with QR cannot improve the complexity bounds presented in this section. Nevertheless, recalling that our results for one-sided Jacobi SVD technically only cover a single sweep, an implementation that includes QR is likely to be much more efficient in practice.

\section{Conclusion}\label{section: conclusion} 
In this paper, we analyzed several parallel versions of Jacobi's method for the symmetric eigenvalue problem. Using both 2D (Section~\ref{section: 2D_parallel}) and 2.5D (Section~\ref{section: 2.5D_parallel}) processor grids, we derived parallel Jacobi algorithms with optimal arithmetic cost and with bandwidth or latency matching the corresponding distributed-memory matrix-multiplication bounds. In each case, we observed a bandwidth/latency tradeoff that was codified theoretically in Section~\ref{section: lower_bound}. Finally, we presented analogous results for one-sided Jacobi SVD in Section~\ref{section: parallel_jacobi_svd}. In total, this work adds Jacobi's method to the growing collection of (dense) parallel linear algebra routines for which the benefits of additional memory -- specifically for reducing communication -- have been explored rigorously. In combination with \cite{arxiv_manuscript}, this effort reasserts the relevance of Jacobi's method for modern scientific computing. \\
\indent Several open questions/directions remain. The most immediate is implementation, as the theoretical efficiency gains of 2.5D parallel Jacobi proved here have not yet been demonstrated in practice. There are a number of related details to be explored there  -- e.g., the potential application of variable precision arithmetic as well as the use of automated tuning algorithms \cite{cho2023harnessing,cho2025surrogate,luo2021non,luo2024hybrid} to set the batching/utilization parameters $\mu$ and $\eta$. In future work, we plan to (1) extend the complexity analysis presented here to accommodate fast (parallel) matrix multiplication (e.g., \cite{LBDS12}) and (2) revisit the improved accuracy bounds of \cite{DemmelVeselic92} with the goal of obtaining analogous results for the parallel setting. Finally, we hope that this paper inspires additional work on 2.5D algorithms for eigenvalue problems, noting in particular that spectral divide and conquer \cite{banks2020pseudospectral,Demmel2024,definite_dnc,Nakatsukasa_Higham_2013} -- which is already known to be communication optimal in the 2D parallel setting \cite{BDD11} --  has not yet been adapted to the 2.5D setting.

\section*{Acknowledgements}
This work was supported by NSF grant MSPRF 2402027 and partly supported by the U.S. Department of Energy under contract DE-AC02-05CH11231 and by NSF grants DMS-2412403, DMS-2606336, and DMS-2616828.

\bibliographystyle{abbrv}
\bibliography{book-ref}

\begin{center}
\textbf{\Large APPENDICES}
\end{center}
\appendix
\section{Detailed Pseudocode for 2D Parallel Jacobi}\label{appendix: detailed_code}
This appendix contains a more detailed version of 2D parallel Jacobi (i.e., Algorithm~\ref{alg:High_Level_Parallel_Jacobi}) written at the processor level. This is presented as Algorithm~\ref{alg:Parallel_Jacobi} and reuses the notation of Section~\ref{section: 2D_parallel}. \\
\indent The pseudocode of Algorithm~\ref{alg:Parallel_Jacobi} should be read from the perspective of an individual processor. Accordingly, it includes instructions for determining what processors in the 2D grid any one needs to communicate with. This requires fairly intricate case work; for each group of disjoint block  indices in the list $L$, there are a total of seven distinct (though not mutually exclusive) cases that Proc$(I,J)$ can be assigned, which we list below. Note that, at each step, any processor applying a rotation to the block of $\M{A}$ it owns on both the left \textit{and} right will be assigned two cases -- one of 4/5 and one of 6/7, with the combination depending on its location in the grid.
 
 \renewcommand{\arraystretch}{1.2}
\begin{table}[h]
    \centering
    \begin{tabular}{c|p{.9\linewidth}}
         Case & State of Proc$(I,J)$ \\
         \hline
         1 & Contains a block to be zeroed out -- will (approximately) diagonalize a $2b \times 2b$ matrix if $I < J$.\\
         2 & Lies on the diagonal of the processor grid and to the left of a superdiagonal block being zeroed out (will broadcast pieces of the corresponding rotation matrix to block row/ column $I$).\\
         3 & Lies on the diagonal of the grid and below a superdiagonal block being zeroed out (will broadcast pieces of the corresponding rotation matrix to block row/column $J$).\\
         4 & Will apply a rotation on the right, exchange blocks with a rightward processor in the same row. \\
         5 & Will apply a rotation on the right, exchange blocks with a leftward processor in the same row.\\
         6 & Will apply a rotation on the left, exchange blocks with a lower processor in the same column. \\
         7 & Will apply a rotation on the left, exchange blocks with an upper processor in the same column.\\
    \end{tabular}
\end{table}
 \renewcommand{\arraystretch}{1}

\indent To keep track of this information, each processor updates a book-keeping array ``info," which records both the case number(s) and the indices of processors to be communicated with. For group two of the motivating example from Section~\ref{section: 2D_parallel}, again handled by an $8\times 8$ processor grid, the casework labels apply as follows: 
\begin{equation}\label{eqn: casework_example}
    \renewcommand\fbox{\fcolorbox{blue}{white}}
    \begin{bmatrix} 2 & 4/6 & 4/6 & 6 & 5/6 & 5/6 & \fbox{1} & 6\\
     4/6 & 2 & 4/6 & 6 & 5/6 & \fbox{1} & 5/6 & 6 \\
     4/6 & 4/6 & 2 & 6 & \fbox{1} & 5/6 & 5/6 & 6 \\
     4 & 4 & 4 & - & 5 & 5 & 5 & - \\
     4/7 & 4/7 & 1 & 7 & 3 & 5/7 & 5/7 & 7 \\
     4/7 & 1 & 4/7 & 7 & 5/7 & 3 & 5/7 & 7 \\
     1 & 4/7 & 4/7 & 7 & 5/7 & 5/7& 3 & 7 \\
     4 & 4 & 4 & - & 5 & 5 & 5& - \\
    \end{bmatrix}
\end{equation}
Here, superdiagonal processors whose indices belong to group two are highlighted in blue, while those without a label do not participate in the computation (compare with \eqref{eqn: alt_transformations}).

\begin{algorithm}
\caption{2D Parallel Jacobi for the Symmetric Eigenproblem (Processor Level)}
\label{alg:Parallel_Jacobi}


\textbf{Input:} (1) $\M{A} \in {\mathbb R}^{n \times n}$ a symmetric matrix. \\
\phantom{Input2 } (2) $\M{Q} \in {\mathbb R}^{n \times n}$ an orthogonal matrix of approximate eigenvectors (optional). \\
\phantom{Input2 } (3) $L$ a list of $\sqrt{P}$ groups of pairwise disjoint indices $(I,J)$ (e.g.,  generated by Algorithm~\ref{alg:anti_diagonal_list}).

\vspace{1mm}
\textbf{Requires:} Proc$(I,J)$ owns the block $\Mb{A}{IJ}=\Me{A}{(I{-}1)b{+}1:Ib,(J{-}1)b{+}1:Jb}$ of $\M{A}$ for $b = n /\sqrt{P}$, which is assumed to be an integer. If omitted, $\M{Q}$ is taken to be the identity matrix; in either case, it is partitioned and stored in the same way as $\M{A}$. 

\vspace{1mm}
\textbf{Ensure:}
On output, one sweep of (block) Jacobi has been applied to $\M{A}$ and $\M{Q}$.
\algrule
\setstretch{1.1}
\begin{algorithmic}[1]
\Function{$[\M{Q},\M{A}]= \; $2D\_Parallel\_Jacobi}{$\M{A}$,$\M{Q}$,$P$}
\State $(I,J)=\Call{MyProcID}$ \Comment{Get processor index}
 \For{$g=1$ to $\sqrt{P}$} \Comment{For each group in $L$, decide how processor $(I,J)$ participates}
 \State info = [\;] \Comment{Casework data}
\If{$(I,J)\in L(g)$
or $(J,I)\in L(g)$}  \State info = [1, $I$, $I$] \Comment{Case 1: $\Mb{A}{IJ}$ and $\Mb{A}{JI}$
to be zeroed out}\ElsIf{$I=J$ and $(I,J')\in L(g)$
for some $J'$}  \State info = [2, 0, $J'$] \Comment{Case 2: Left diagonal processor of block being zeroed out} \ElsIf{$I=J$
and $(I',J)\in L(g)$ for some $I'$}
\State info = [3, $I'$, 0] \Comment{Case 3: Lower
diagonal processor of block being zeroed 
out} \Else
\If{$J=\min(I',J')$ for some $(I',J')\in L(g)$}
\State info = [info; 4, $I'$, $J'$] \Comment{Case 4: Other processor in left column doing an update} 
\EndIf \If{$J=\max(I',J')$
for some $(I',J')\in L(g)$} \State
info = [info; 5, $I'$, $J'$] \Comment{Case 5: Other processor
in right column doing an update} \EndIf
\If{$I=\min(I',J')$ for some $(I',J')\in L(g)$}
 \State info = [info; 6, $I'$, $J'$] \Comment{Case 6: Other processor in upper row doing an update} \EndIf
 \If{$I=\max(I',J')$
for some $(I',J')\in L(g)$} \State
info = [info; 7, $I'$, $J'$] \Comment{Case 7: Other processor
in lower row doing an update} \EndIf \EndIf
\For{k = 1 to length(info)} 
\State [Case, $I'$, $J'$] = info$(k,\;:)$
\If{Case = 2} Send $\Mb{A}{II}$ to
Proc($I,J'$) \ElsIf{Case = 3} Send $\Mb{A}{JJ}$ to Proc($I',J$)
\ElsIf{Case = 1 and $I<J$} \State Receive $\Mb{A}{II}$ from
Proc($I,I$) and $\Mb{A}{JJ}$ from Proc($J,J$)
\vskip 3pt
\State $\begin{pmatrix} \Mb{A}{II} & \Mb{A}{IJ} \\
\Mb{A}{IJ}^{T} & \Mb{A}{JJ} \end{pmatrix}= \M{V}\M{D}\M{V}^T$ 
\Comment{(Approximately) solve $2b \times 2b$ symmetric eigenproblem}
\vskip 3pt
\State $\M{V} = \begin{pmatrix} \M{V}_{11} & \M{V}_{12} \\ \M{V}_{21} & \M{V}_{22} \end{pmatrix}; \; \; \; \M{D} = \begin{pmatrix} \M{D}_{11} & \M{D}_{12} \\ \M{D}_{21} & \M{D}_{22} \end{pmatrix}$ \Comment{Break $\M{V}$ and $\M{D}$ into $b \times b$ subblocks}
\vskip 3pt
\State Send $[\Mb{V}{11};\Mb{V}{21}]$
and $\Mb{D}{11}$ to Proc($I,I$) \State Send $[\Mb{V}{12};\Mb{V}{22}]$
and $\Mb{D}{22}$ to Proc($J,J$) \State Send $[\Mb{V}{11};\Mb{V}{21}]$,
$\Mb{V}{22}$, and $\Mb{D}{21}$ to Proc($J,I$) \State $\Mb{A}{IJ}=\Mb{D}{12}$
\EndIf \Comment{Continued on next page $\rightarrow $} \algstore{bkbreak} \end{algorithmic} 
\end{algorithm}

\begin{algorithm}
\begin{algorithmic} \algrestore{bkbreak} \If{Case = 2} \State
Receive $[\Mb{V}{11};\Mb{V}{21}]$ and $\Mb{D}{11}$ from Proc($I,J'$)
\State $\Mb{A}{II}=\Mb{D}{11}$  \State Broadcast
$[\Mb{V}{11};\Mb{V}{21}]$ to Proc($I,K$) for $K\neq I, J'$ \Comment{Send to upper processor row}  \State Broadcast $[\Mb{V}{11};\Mb{V}{21}]$ to Proc($K,I$)
for $K\neq I, J'$ \Comment{Send to left processor column}\ElsIf{Case = 3} \State Receive $[\Mb{V}{12};\Mb{V}{22}]$
and $\Mb{D}{22}$ from Proc($I',J$) \State $\Mb{A}{JJ}=\Mb{D}{22}$
 \State Broadcast $[\Mb{V}{12};\Mb{V}{22}]$ to
Proc($J,K$) for $K\neq J,I'$ \Comment{Send to lower processor row} \State Broadcast $[\Mb{V}{12};\Mb{V}{22}]$
to Proc($K,J$) for $K\neq J,I'$ \Comment{Send
to right processor column}\ElsIf{Case = 1 and $J<I$} \State
Receive $[\Mb{V}{11};\Mb{V}{21}]$, $\Mb{V}{22}$, and $\Mb{D}{21}$
from Proc($J,I$) \State $\Mb{A}{IJ}=\Mb{D}{21}$ \EndIf \If{Case = 4} \Comment{Exchange block columns of $\M{A}$ and update local
block of $\M{A}$} \State Receive $[\Mb{V}{11},\Mb{V}{21}]$ from
Proc($J,J$) \State Send $\Mb{A}{IJ}$ to Proc($I,J'$) \State Receive
$\Mb{A}{IJ'}$ from Proc($I,J'$) \State $\Mb{A}{IJ}=\Mb{A}{IJ}\cdot\Mb{V}{11}+\Mb{A}{IJ'}\cdot\Mb{V}{21}$
\ElsIf{Case = 5}  \Comment{Exchange block columns of
$\M{A}$ and update local block of $\M{A}$} \State Receive $[\Mb{V}{12},\Mb{V}{22}]$
from Proc($J,J$) \State Receive $\Mb{A}{II'}$ from Proc($I,I'$)
\State Send $\Mb{A}{IJ}$ to Proc($I,I'$) \State $\Mb{A}{IJ}=\Mb{A}{II'}\cdot\Mb{V}{12}+\Mb{A}{IJ}\cdot\Mb{V}{22}$
\ElsIf{Case = 6} \Comment{Exchange block rows of $\M{A}$ and
update local block of $\M{A}$} \State Receive $[\Mb{V}{11},\Mb{V}{21}]$
from Proc($I,I$) \State Send $\Mb{A}{IJ}$ to Proc($J',J$) \State
Receive $\Mb{A}{J'J}$ from Proc($J',J$) \State $\Mb{A}{IJ}=\Mb{V}{11}^{T}\cdot\Mb{A}{IJ}+\Mb{V}{21}^{T}\cdot\Mb{A}{J'J}$
\ElsIf{Case=7} \Comment{Exchange block rows of $\M{A}$ and
update local block of $\M{A}$} \State Receive $[\Mb{V}{12},\Mb{V}{22}]$
from Proc($I,I$) \State Receive $\Mb{A}{I'J}$ from Proc($I',J$)
\State Send $\Mb{A}{IJ}$ to Proc($I',J$) \State $\Mb{A}{IJ}=\Mb{V}{12}^{T}\cdot\Mb{A}{I'J}+\Mb{V}{22}^{T}\cdot\Mb{A}{IJ}$
\EndIf 
\If{Eigenvectors are requested}
\Comment{Update $\M{Q}$} \If{Case = 4 or Case = 2 or (Case = 1
and $I>J$)} \State Send $\Mb{Q}{IJ}$ to Proc($I,J'$)
\State Receive $\Mb{Q}{IJ'}$ from Proc($I,J'$) \Comment{Exchange columns of $\M{Q}$,
update local block of $\M{Q}$}\State $\Mb{Q}{IJ}=\Mb{Q}{IJ}\cdot\Mb{V}{11}+\Mb{Q}{IJ'}\cdot\Mb{V}{21}$
\ElsIf{Case = 5 or Case = 3 or (Case = 1 and $I<J$)} \State Receive $\Mb{Q}{II'}$ from Proc($I,I'$) \State
Send $\Mb{Q}{IJ}$ to Proc($I,I'$) \Comment{Exchange columns of $\M{Q}$, update local block
of $\M{Q}$} \State $\Mb{Q}{IJ}=\Mb{Q}{II'}\cdot\Mb{V}{12}+\Mb{Q}{IJ}\cdot\Mb{V}{22}$
\EndIf \EndIf \EndFor \EndFor \EndFunction \end{algorithmic} 
\end{algorithm}

\section{Optimal Parameter Selection in 2.5D Parallel Jacobi}\label{appendix: optimization}

Assume
\[
  2\le c\le P^{1/3}.
\]
The feasible parameter set from Theorem~\ref{thm: 2.5D_parallel_comp}  is
\begin{equation}
\mathcal D:=
\left\{(\mu,\eta)\in\R^2:
 c\le\mu\le\sqrt{\frac{P}{c}},\qquad
 \frac{2\mu}{\sqrt{cP}}\le\eta\le 1
\right\}.
\label{eq:D}
\end{equation}
The assumption \(c\ge2\) ensures that the interval for \(\eta\) is nonempty
throughout the stated range of \(\mu\).

For \((\mu,\eta)\in\mathcal D\), define
\begin{equation}
X(\mu,\eta):=
\left(\frac{\eta^2P^3}{\mu^2c}\right)^{1/4}
=\sqrt{\frac{\eta P^{3/2}}{\mu\sqrt c}},
\label{eq:X}
\end{equation}
and let the leading terms from the latency and bandwidth parts of Theorem~\ref{thm: 2.5D_parallel_comp}  be
\begin{align}
S(\mu,\eta)
&:=X(\mu,\eta)\log\!\left(\eta\mu\sqrt{cP}\right)
 +\frac{\sqrt{P/c}}{\mu}
  \log\!\left(\frac{\sqrt{P/c}}{\mu}\right),
\label{eq:S}\\
W(\mu,\eta)
&:=\frac{n^2}{X(\mu,\eta)}+\frac{n^2}{\sqrt{cP}}.
\label{eq:W}
\end{align}
The minimax communication criterion for a fixed hardware pair
\((\alpha,\beta)\) is
\begin{equation}
\mathcal M_{\alpha,\beta}(\mu,\eta)
:=\max\left\{\alpha S(\mu,\eta),\ \beta W(\mu,\eta)\right\}.
\label{eq:minimax-objective}
\end{equation}
This is a \emph{bottleneck} criterion.  It is distinct from the additive runtime
model \(\alpha S+\beta W+\gamma F\) in equation~(1) of the paper.

Theorem~\ref{thm: 2.5D_parallel_comp}  uses big-\(O\) notation, so it does not identify the constant factors
needed for an exact machine-level optimum.  The result below is exact for the
leading-term model \eqref{eq:S}--\eqref{eq:W}.  If calibration gives constants
\(C_S,C_W>0\), simply replace \(\alpha\) and \(\beta\) by
\(C_S\alpha\) and \(C_W\beta\), respectively.

\begin{theorem} 
\label{thm:minimax}
Let \(\alpha,\beta>0\).  The optimization problem
\begin{equation}
\min_{(\mu,\eta)\in\mathcal D}
\mathcal M_{\alpha,\beta}(\mu,\eta)
\label{eq:main-minimax}
\end{equation}
has a unique solution.  It can be obtained from a one-dimensional monotone
root problem, as follows.

Set
\begin{equation}
q:=\sqrt{\frac{P}{c}},\qquad
x_-:=\sqrt{\frac{2P}{c}},\qquad
x_+:=\left(\frac{P}{c}\right)^{3/4}.
\label{eq:q-x}
\end{equation}
Under the standing assumption $2\le c\le P^{1/3}$, the interval
\([x_-,x_+]\) is nonempty, implying $P\ge 8$ and hence $4c\le P.$ For \(x\in[x_-,x_+]\), define
\begin{equation}
m(x):=\min\left\{q,\frac{P^{3/2}}{\sqrt c\,x^2}\right\},
\qquad
I(x):=[c,m(x)],
\label{eq:m-I}
\end{equation}
For an interval $I=[a,b]$, we write $\operatorname{proj}_{I}(z):=\min\{b,\max\{a,z\}\}$ for the Euclidean projection of $z$ onto $I$,
and
\begin{equation}
s(x,\mu):=
 x\log\!\left(\frac{\mu^2cx^2}{P}\right)
 +\frac{q}{\mu}\log\!\left(\frac{q}{\mu}\right),
\qquad
w(x):=\frac{n^2}{x}+\frac{n^2}{\sqrt{cP}}.
\label{eq:s-w}
\end{equation}
Let $W_0$ denote the principal Lambert $W$-function. For $z>0$,
$W_0(z)$ is the unique nonnegative solution of
$W_0(z)e^{W_0(z)}=z$ \cite{goerg2011lambert}, and set
\begin{equation}
\mu_S(x):=
\proj_{I(x)}
\left(
 \frac{q}{2x}\Wfun(2ex)
\right),
\qquad
\overline S(x):=s\bigl(x,\mu_S(x)\bigr).
\label{eq:muS}
\end{equation}
Then \(\overline S\) is continuous and strictly increasing on
\([x_-,x_+]\), while \(w\) is continuous and strictly decreasing on that
interval.  Consequently, the unique minimizer \(x_{\alpha,\beta}\) is
\begin{equation}
x_{\alpha,\beta}=
\begin{cases}
x_-,
& \alpha\overline S(x_-)\ge\beta w(x_-),\\[1mm]
x_+,
& \alpha\overline S(x_+)\le\beta w(x_+),\\[1mm]
\text{the unique root in \((x_-,x_+)\) of}
\quad \alpha\overline S(x)=\beta w(x),
& \text{otherwise}.
\end{cases}
\label{eq:x-ab}
\end{equation}
The corresponding optimal theorem parameters are
\begin{equation}
\mu_{\alpha,\beta}:=\mu_S(x_{\alpha,\beta}),
\qquad
\eta_{\alpha,\beta}:=
\frac{\mu_{\alpha,\beta}\sqrt c\,x_{\alpha,\beta}^2}{P^{3/2}}.
\label{eq:mu-eta-ab}
\end{equation}
In particular, the optimizer depends on \((\alpha,\beta)\) only through the
ratio \(\beta/\alpha\).
\end{theorem}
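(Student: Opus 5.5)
The plan is to reparametrize the feasible set $\mathcal D$ by $(x,\mu)$ with $x=X(\mu,\eta)$, so that bandwidth depends only on $x$. From \eqref{eq:X}, $\eta=\mu\sqrt c\,x^2/P^{3/2}$, and $\eta\mu\sqrt{cP}=\mu^2cx^2/P$. Hence $S(\mu,\eta)=s(x,\mu)$ and $W(\mu,\eta)=w(x)$, with $s,w$ as in \eqref{eq:s-w}.

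The constraints $2\mu/\sqrt{cP}\le\eta\le1$ become $x\ge\sqrt{2P/c}=x_-$ and $\mu\le P^{3/2}/(\sqrt c\,x^2)$. Together with $c\le\mu\le q$, the fibre over $x$ is exactly $I(x)=[c,m(x)]$. It is nonempty iff $P^{3/2}/(\sqrt c x^2)\ge c$, i.e.\ $x\le (P/c)^{3/4}=x_+$. The check that $x_-\le x_+$ amounts to $2\le\sqrt{P/c}$, which follows from $c\le P^{1/3}$ and $P\ge8$. The map $(\mu,\eta)\mapsto(x,\mu)$ is a bijection $\mathcal D\to\{(x,\mu):x\in[x_-,x_+],\mu\in I(x)\}$. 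Therefore
\[
\min_{\mathcal D}\mathcal M_{\alpha,\beta}=\min_{x\in[x_-,x_+]}\max\{\alpha\overline S(x),\beta w(x)\},\qquad \overline S(x):=\min_{\mu\in I(x)}s(x,\mu),
\]
since $w$ does not depend on $\mu$ and $\max\{\alpha a,\beta w\}$ is nondecreasing in $a$.

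Next I identify the inner minimizer. Writing $s(x,\mu)=x\log(cx^2/P)+2x\log\mu+(q/\mu)\log(q/\mu)$ and setting $t=q/\mu$, I get
\[
\partial_\mu s=\frac{2x}{\mu}-\frac{q}{\mu^2}\bigl(\log(q/\mu)+1\bigr)=\frac{1}{\mu}\bigl(2x-t(\log t+1)\bigr).
\]
The function $t(\log t+1)$ is strictly increasing for $t\ge1$, and $t=q/\mu\ge1$ on $I(x)$. So $s(x,\cdot)$ is strictly quasiconvex (decreasing, then increasing) on $I(x)$, with its unique critical point at $t(1+\log t)=2x$. Substituting $u=\log t+1$ gives $ue^{u}=2ex$, so $u=W_0(2ex)$ and $t=2x/W_0(2ex)$. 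Thus $\mu^\ast=qW_0(2ex)/(2x)$, and for a unimodal function the constrained minimizer is the projection $\mu_S(x)$ onto $I(x)$, which is unique. Continuity of $\overline S$ follows because $\mu_S$ is continuous (the endpoints of $I(x)$ are continuous and $W_0$ is continuous) and $s$ is jointly continuous.

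The main obstacle is strict monotonicity of $\overline S$. I would use an envelope/Danskin-type argument. Over any interval where the active regime is fixed (interior critical point, lower endpoint $c$, or upper endpoint $m(x)$), $\overline S$ is differentiable. In the interior and at the fixed endpoint $c$, $\frac{d}{dx}\overline S=\partial_x s=\log(\mu^2cx^2/P)+2$. Since $\mu^2cx^2/P=\eta\mu\sqrt{cP}\ge2\mu^2\ge 2c^2\ge8$, this is positive.

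When $\mu=m(x)=P^{3/2}/(\sqrt c x^2)$ is active, the regime corresponds to $\eta=1$. There I compute $\overline S$ directly: $\overline S(x)=x\log(P^2/x^2)+(q/m)\log(q/m)$. Its derivative is $\log(P^2/x^2)-2$ plus a term from $q/m=\sqrt{c}\,qx^2/P^{3/2}$, which is increasing in $x$ and has nonnegative log for $q/m\ge1$. Since $x\le x_+<P/e$ under $c\ge2$, the first part is positive. An alternative to this computation would be a monotone-fibre argument: the fibres $I(x)$ shrink as $x$ increases, so $\min$ over a smaller set is larger, combined with $\partial_x s>0$. That gives strict increase more cleanly, so I would lead with it: for $x<x'$, $\overline S(x')=s(x',\mu_S(x'))>s(x,\mu_S(x'))\ge\overline S(x)$, using $\mu_S(x')\in I(x')\subseteq I(x)$.

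The monotonicity of $w$ is immediate. The outer problem then minimizes the maximum of a strictly increasing continuous function $\alpha\overline S$ and a strictly decreasing continuous function $\beta w$. That maximum is strictly decreasing up to the crossing and strictly increasing after it, so the minimizer is unique. It is the crossing point if one exists in the interval (by the intermediate value theorem), and otherwise the endpoint $x_-$ or $x_+$, matching \eqref{eq:x-ab}. Uniqueness of the minimizer in $(\mu,\eta)$ follows from uniqueness of $x_{\alpha,\beta}$, uniqueness of the inner minimizer, and the bijection. Inverting the reparametrization gives \eqref{eq:mu-eta-ab}. Finally, dividing the criterion by $\alpha$ shows that the optimizer depends only on $\beta/\alpha$.
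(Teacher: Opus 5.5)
Your proposal is correct and follows essentially the same route as the paper's proof. Both use the change of variables $x=X(\mu,\eta)$ with fibres $I(x)$, the Lambert-$W$ stationary point projected onto $I(x)$, strict monotonicity of $\overline S$ via the nested fibres $I(x')\subseteq I(x)$ together with $\partial_x s>0$, and the increasing/decreasing crossing argument for the outer problem.

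One small tightening is needed in your final uniqueness claim. Uniqueness of the minimizer of $s(x,\cdot)$ does not by itself give uniqueness of the minimizer of $\max\{\alpha s(x,\cdot),\beta w(x)\}$ when $\beta w(x_{\alpha,\beta})>\alpha\overline S(x_{\alpha,\beta})$. That case only arises at $x_{\alpha,\beta}=x_+$, where $m(x_+)=c$ makes $I(x_+)=\{c\}$ a singleton, so your conclusion still holds; the paper does not spell this out either.
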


\begin{proof}
Use \(x=X(\mu,\eta)\) as a new variable.  Equation~\eqref{eq:X} gives
\begin{equation}
\eta=\frac{\mu\sqrt c\,x^2}{P^{3/2}}.
\label{eq:eta-x}
\end{equation}
The lower constraint on \(\eta\) in \eqref{eq:D} is equivalent to
\(x\ge x_-\).  The upper constraint \(\eta\le1\) is equivalent to
\(\mu\le P^{3/2}/(\sqrt c\,x^2)\).  Combining this with
\(c\le\mu\le q\) shows that the feasible pairs are equivalently
\begin{equation}
 x\in[x_-,x_+],\qquad \mu\in I(x),
\label{eq:feasible-xmu}
\end{equation}
where \(x_+\) is obtained by setting \((\mu,\eta)=(c,1)\).  Under this
change of variables,
\[
\eta\mu\sqrt{cP}=\frac{\mu^2cx^2}{P},
\]
so the functions in \eqref{eq:S}--\eqref{eq:W} become \(s(x,\mu)\) and
\(w(x)\) in \eqref{eq:s-w}.

For fixed \(x\), the bandwidth term \(w(x)\) is independent of \(\mu\).
Thus, it is sufficient to minimize \(s(x,\mu)\) over \(I(x)\).  Direct
calculation gives
\begin{equation}
\frac{\partial s}{\partial\mu}(x,\mu)
=
\frac{2x\mu-q\bigl(1+\log(q/\mu)\bigr)}{\mu^2}.
\label{eq:s-mu-derivative}
\end{equation}
The numerator in \eqref{eq:s-mu-derivative} is strictly increasing in
\(\mu\), because its derivative is \(2x+q/\mu>0\).  Its unique zero is
obtained from
\[
2x\mu=q\left(1+\log\!\left(\frac{q}{\mu}\right)\right).
\]
Writing \(r=2x\mu/q\) yields \(re^r=2ex\); hence
\(\mu=q\Wfun(2ex)/(2x)\).  Projecting this unique unconstrained minimizer
onto \(I(x)\) proves \eqref{eq:muS}.

For every feasible \(\mu\),
\begin{equation}
\frac{\partial s}{\partial x}(x,\mu)
=
\log\!\left(\frac{\mu^2cx^2}{P}\right)+2
=\log\!\left(\eta\mu\sqrt{cP}\right)+2>0,
\label{eq:s-x-derivative}
\end{equation}
because \(\eta\mu\sqrt{cP}\ge2\mu^2\ge2\).  Also, \(I(x_2)\subseteq
I(x_1)\) whenever \(x_1<x_2\).  Therefore, for \(x_1<x_2\),
\[
\overline S(x_2)
=\min_{\mu\in I(x_2)}s(x_2,\mu)
>\min_{\mu\in I(x_2)}s(x_1,\mu)
\ge\min_{\mu\in I(x_1)}s(x_1,\mu)
=\overline S(x_1).
\]
Thus \(\overline S\) is strictly increasing; its continuity follows from
\eqref{eq:muS}.  On the other hand,
\(w'(x)=-n^2/x^2<0\).

The original objective is now
\[
\min_{x\in[x_-,x_+]}
\max\{\alpha\overline S(x),\beta w(x)\}.
\]
The first argument of the maximum is strictly increasing and the second is
strictly decreasing.  Therefore the minimum is attained at the left endpoint,
the right endpoint, or their unique crossing, precisely as stated in
\eqref{eq:x-ab}.  Equations~\eqref{eq:muS} and \eqref{eq:eta-x} then yield
\eqref{eq:mu-eta-ab}.  Multiplying both \(\alpha\) and \(\beta\) by the same
positive constant leaves the crossing equation unchanged, so only
\(\beta/\alpha\) matters.
\end{proof}

\begin{corollary} 
\label{cor:recovery} The following statements follow

\begin{enumerate}

\item \emph{Latency only.}  Fix any admissible \(\mu\) and set \(\beta=0\).
Then
\begin{equation}
\argmin_{\eta:\,(\mu,\eta)\in\mathcal D}S(\mu,\eta)
=\left\{\frac{2\mu}{\sqrt{cP}}\right\},
\qquad
X=\sqrt{\frac{2P}{c}}.
\label{eq:lat-only}
\end{equation}
This is exactly the parameter selection used to obtain equation~(23).
If, in addition, one requires the leading arithmetic term to satisfy
\[
\frac{n^3}{X^2}+\frac{n^3}{P}\le\frac{2n^3}{P},
\]
then \(X^2\ge P\), or equivalently
\begin{equation}
\eta\ge\mu\sqrt{\frac{c}{P}}.
\label{eq:arith-constraint}
\end{equation}
Since latency increases with \(\eta\), equality in \eqref{eq:arith-constraint}
is optimal.  Hence
\begin{equation}
\eta=\mu\sqrt{\frac{c}{P}},
\qquad X=\sqrt P,
\label{eq:arith-preserving}
\end{equation}
which gives equation~(24).

\smallskip
\noindent\emph{SVD specialization.}
For the SVD bound in Theorem~\ref{thm: 2.5D_SVD_comp}, the leading
arithmetic term has optimal order when
\[
\frac{\sqrt{mn^5}}{X^2}=O\!\left(\frac{mn^2}{P}\right),
\]
which is equivalent to
\[
X^2=\Omega\!\left(P\sqrt{\frac{n}{m}}\right),
\qquad
\eta=\Omega\!\left(\mu\sqrt{\frac{nc}{mP}}\right).
\]
The uniform square-case choice
\(\eta=\Theta\!\left(\mu\sqrt{c/P}\right)\) gives
\(X=\Theta(\sqrt P)\), so it preserves optimal arithmetic for every
\(m\ge n\); it is also order-minimal under this arithmetic constraint when
\(m=\Theta(n)\).  This is the choice used in
\eqref{eqn: reasonable_latency_optimal}.

\item \emph{Bandwidth only.}  Fix any admissible \(\mu\) and set \(\alpha=0\).
Then
\begin{equation}
\argmin_{\eta:\,(\mu,\eta)\in\mathcal D}W(\mu,\eta)=\{1\}.
\label{eq:bw-only-fixed-mu}
\end{equation}
 With
\(\eta=1\), the condition
\begin{equation}
W(\mu,1)\le\frac{2n^2}{\sqrt{cP}}
\label{eq:bw-order}
\end{equation}
is equivalent to
\begin{equation}
\mu\le\sqrt{\frac{P}{c^3}}.
\label{eq:mu-bound}
\end{equation}
Choosing equality in \eqref{eq:mu-bound} gives
\begin{equation}
\mu=\sqrt{\frac{P}{c^3}},
\qquad \eta=1,
\qquad X=\sqrt{cP},
\label{eq:paper-bw-choice}
\end{equation}
and therefore
\[
S=\sqrt{cP}\log\!\left(\frac{P}{c}\right)+c\log c,
\qquad
W=\frac{2n^2}{\sqrt{cP}},
\qquad
\frac{n^3}{X^2}+\frac{n^3}{P}
=\left(1+\frac1c\right)\frac{n^3}{P}.
\]
These are the leading terms in Corollary~1.
\end{enumerate}

If both \(\mu\) and \(\eta\) are allowed to vary in a \emph{literal}
bandwidth-only minimization, the minimizer is instead \((\mu,\eta)=(c,1)\).
Thus Corollary~1 should be understood as the paper's selected
\(\eta=1\), bandwidth-order-optimal representative, rather than as the
unique minimizer of the raw bandwidth leading term over all of \(\mathcal D\).
\end{corollary}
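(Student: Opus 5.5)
The plan is to reuse the change of variables from the proof of Theorem~\ref{thm:minimax}: set $x = X(\mu,\eta)$, so that $\eta = \mu\sqrt{c}\,x^2/P^{3/2}$ by \eqref{eq:eta-x}. For fixed $\mu$, $X^2 = \eta P^{3/2}/(\mu\sqrt c)$ is a strictly increasing bijection between $\eta \in [2\mu/\sqrt{cP},\,1]$ and an interval of $x$. Every claim then reduces to a monotonicity statement in $x$, or to a direct algebraic substitution.

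\emph{Latency only.} For fixed admissible $\mu$, I will show that $S(\mu,\eta) = s(x,\mu)$ is strictly increasing in $x$, hence in $\eta$. This uses \eqref{eq:s-x-derivative}:
\[
\partial_x s = \log(\eta\mu\sqrt{cP}) + 2 > 0,
\]
which holds because $\eta\mu\sqrt{cP} \ge 2\mu^2 \ge 2$ on $\mathcal D$. The unique minimizer is therefore the lower endpoint $\eta = 2\mu/\sqrt{cP}$. Substituting gives $X^2 = 2P/c$.

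For the arithmetic constraint, the inequality $n^3/X^2 + n^3/P \le 2n^3/P$ is literally $X^2 \ge P$. This is equivalent to $\eta \ge \mu\sqrt{c/P}$, after inverting \eqref{eq:eta-x} at fixed $\mu$. I will check feasibility: $\mu\sqrt{c/P} \ge 2\mu/\sqrt{cP}$ holds exactly when $c \ge 2$, which is the standing assumption. I also need $\mu\sqrt{c/P} \le 1$, which holds because $\mu \le \sqrt{P/c}$. Monotonicity of $S$ in $\eta$ then makes equality optimal, giving $X = \sqrt P$.

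\emph{SVD specialization.} This is pure algebra from Theorem~\ref{thm: 2.5D_SVD_comp}. The condition $\sqrt{mn^5}/X^2 = O(mn^2/P)$ rearranges to $X^2 = \Omega(P\sqrt{n/m})$. Using the same definition of $X$, this is equivalent to $\eta = \Omega(\mu\sqrt{nc/(mP)})$. Since $m \ge n$, the choice $\eta = \Theta(\mu\sqrt{c/P})$ dominates this bound and gives $X = \Theta(\sqrt P)$. When $m = \Theta(n)$ the two thresholds coincide, so the choice is order-minimal, because the latency term is increasing in $\eta$ as above.

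\emph{Bandwidth only.} Here $W = n^2/X + n^2/\sqrt{cP}$ is strictly decreasing in $X$, hence in $\eta$ at fixed $\mu$, so the minimizer is $\eta = 1$. With $\eta = 1$, the bound \eqref{eq:bw-order} reads $n^2/X \le n^2/\sqrt{cP}$, that is, $X^2 \ge cP$. Substituting $X^2 = P^{3/2}/(\mu\sqrt c)$ turns this into $\mu \le \sqrt{P/c^3}$.

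At equality I get $X = \sqrt{cP}$, and the remaining terms follow by substitution:
\begin{itemize}
\item $\eta\mu\sqrt{cP} = \sqrt{P/c^3}\cdot\sqrt{cP} = P/c$, giving the first latency term $\sqrt{cP}\log(P/c)$;
\item $q/\mu = c$, giving the second latency term $c\log c$;
\item $W = 2n^2/\sqrt{cP}$;
\item $n^3/X^2 + n^3/P = (1+1/c)\,n^3/P$.
\end{itemize}
I will remark that this choice is admissible only when $\sqrt{P/c^3} \ge c$, i.e.\ $P \ge c^5$, matching the note after Corollary~\ref{cor: bandwidth_optimal_jacobi}.

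Finally, for the literal two-variable bandwidth minimization, $W$ depends only on $X$ and is decreasing in it. The quantity $X^2 = \eta P^{3/2}/(\mu\sqrt c)$ is uniquely maximized on $\mathcal D$ by taking $\eta$ maximal and $\mu$ minimal, which is $(\mu,\eta) = (c,1)$, a feasible point.

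The only step with real content is the monotonicity of $S$ in $\eta$. Its key input is the bound $\eta\mu\sqrt{cP} \ge 2$, which I will take from the lower constraint on $\eta$ together with $\mu \ge c \ge 2$. Everything else is substitution.
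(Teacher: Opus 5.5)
Your proposal is correct and follows essentially the same route as the paper. You use strict monotonicity of the latency term in $\eta$, obtained by differentiating $s$ in $x=X$ where the paper differentiates $S$ in $\eta$; by the chain rule these are the same computation, and both rest on $\eta\mu\sqrt{cP}\ge 2$. You then use monotonicity of $W$ in $X$ and direct substitution for the rest. Your extra feasibility checks are correct additions the paper leaves implicit: $c\ge 2$ for $\eta=\mu\sqrt{c/P}$ to be admissible, and $P\ge c^5$ for $\mu=\sqrt{P/c^3}$.
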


\begin{proof} 

For latency only, at fixed \(\mu\), \(X\) is proportional to \(\sqrt\eta\),
and differentiation gives
\[
\frac{\partial S}{\partial\eta}
=\frac{X}{2\eta}
\left[\log\!\left(\eta\mu\sqrt{cP}\right)+2\right]>0.
\]
Therefore \(\eta\) is minimized at its lower feasible endpoint, proving
\eqref{eq:lat-only}.  The arithmetic constraint is equivalent to
\(X^2\ge P\), which is equivalent to \eqref{eq:arith-constraint} by
\eqref{eq:X}; the same monotonicity gives \eqref{eq:arith-preserving}.
For the SVD specialization,
\[
\frac{\sqrt{mn^5}}{X^2}=O\!\left(\frac{mn^2}{P}\right)
\quad\Longleftrightarrow\quad
X^2=\Omega\!\left(P\sqrt{\frac{n}{m}}\right).
\]
Substitution into \eqref{eq:X} yields
\(\eta=\Omega\!\left(\mu\sqrt{nc/(mP)}\right)\).
For fixed \(\mu\), the SVD latency term is likewise strictly increasing in
\(\eta\) on the feasible set.  The choice
\(\eta=\Theta\!\left(\mu\sqrt{c/P}\right)\) instead gives
\(X=\Theta(\sqrt P)\), and the two utilization orders agree when
\(m=\Theta(n)\).

For bandwidth only, \(W\) decreases with \(X\), and at fixed \(\mu\), \(X\)
increases with \(\eta\).  This proves \eqref{eq:bw-only-fixed-mu}.  When
\(\eta=1\), condition \eqref{eq:bw-order} is equivalent to
\(X\ge\sqrt{cP}\).  Using \(X^2=P^{3/2}/(\mu\sqrt c)\) gives
\eqref{eq:mu-bound}; substituting equality yields \eqref{eq:paper-bw-choice}
and the displayed formulas.  Finally, over all of \(\mathcal D\), \(X\) is
maximized by minimizing \(\mu\) and maximizing \(\eta\), namely by
\((\mu,\eta)=(c,1)\), proving the last statement.
\end{proof}

\begin{figure*}[t]
    \centering
    \includegraphics[width=0.95\textwidth]{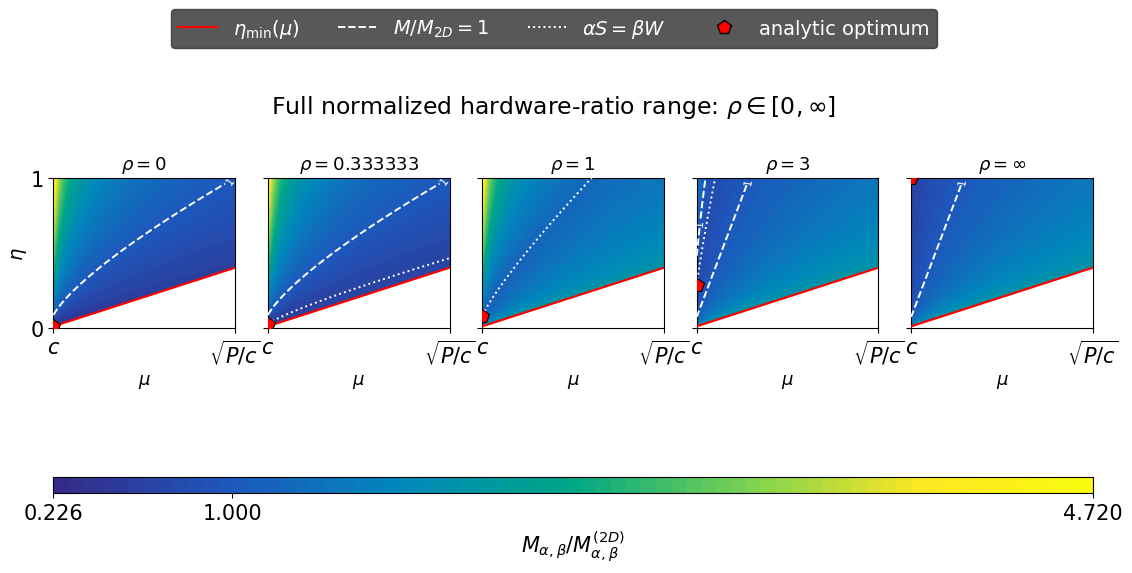}
    \includegraphics[width=0.95\textwidth]{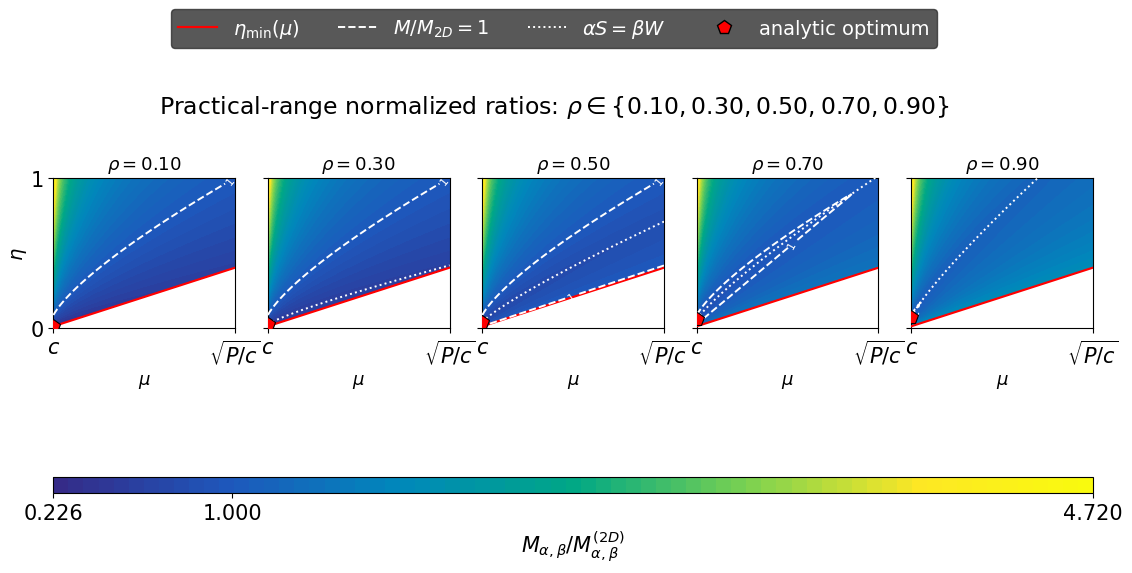}
    \caption{
    Optimization in Theorem \ref{thm:minimax}   of the latency and bandwidth bounds for 2.5D
    parallel Jacobi as functions of the batching parameter $\mu$ and the
    utilization factor $\eta$. 
    In all panels, $c=2$, $P=2^{18}$, $n=10*P$, $\alpha=1$, and
    $\beta=\rho\beta_0$, where
    $\beta_0:=S_{2D}/W_{2D}$. 
    Hidden constants in the complexity bounds are set to 5, and a
    $100\times100$ grid is used only to render the objective surface.
    The red line is the feasibility boundary
    $\eta_{\min}(\mu)=2\mu/\sqrt{cP}$, the white dashed contour marks    $M_{\alpha,\beta}/M_{\alpha,\beta}^{(2D)}=1$, and the white dotted
    contour marks the balance condition $\alpha S=\beta W$.}
    \label{fig:heatmap_1}
\end{figure*}

Figure \ref{fig:heatmap_1} shows that the optimal parameters vary continuously with the hardware ratio, interpolating between latency-dominated choices and the bandwidth-only limit. For small bandwidth weight, the optimizer can lie slightly inside the feasible region; as bandwidth becomes more important, the reduced optimum reaches the constraint $\mu=c$ and then follows that boundary while $\eta$ increases. This agrees with broader latency-bandwidth tradeoff: improving one communication cost necessarily worsens the other, while the selected bandwidth-optimal configuration is an order-optimal representative rather than the literal minimizer of the raw bandwidth term. In practice, these continuous optima should guide parameter selection rather than prescribe it: hidden constants, calibration, and rounding to admissible processor-grid and divisibility choices can shift the implemented optimum.

In particular,
$q=\sqrt{P/c}$, $x=X(\mu,\eta)$, and, for fixed $x$, the latency term is
minimized over the feasible interval $I(x)$ by
\[
 \mu_S(x)
 =
 \operatorname{proj}_{I(x)}
 \left(\mu_0(x)\right),
 \qquad
 \mu_0(x):=\frac{q}{2x}W_0(2ex).
\]
As shown in the proof of Theorem~\ref{thm:minimax}, this formula follows by setting the $\mu$-derivative of the reduced latency term to zero:
\[
 2x\mu=q\left(1+\log\frac{q}{\mu}\right).
\]
The left-hand side increases with $\mu$, while the right-hand side decreases; hence the stationary point is unique.  Moreover, $\mu_0(x)$ is strictly decreasing in $x$.  Thus the lower constraint becomes active once $\mu_0(x)$ reaches $c$, and it remains active for all larger $x$.

Let $x_c$ denote this switching value.  Substituting $\mu=c$ into the stationarity equation gives
\begin{equation}
 x_c
 =
 \frac{q}{2c}\left(1+\log\frac{q}{c}\right).
 \label{eq:mu-c-switch}
\end{equation}
Whenever $x_c$ lies in the reduced feasible interval of
Theorem~\ref{thm:minimax},
\begin{equation}
 \mu_S(x)=c
 \quad\Longleftrightarrow\quad
 x\geq x_c.
 \label{eq:mu-c-equivalence}
\end{equation}
Consequently, the occurrence of $\mu_\star=c$ is not a numerical artifact: it is the exact point at which the unconstrained latency-minimizing batching parameter leaves the feasible set.

To translate \eqref{eq:mu-c-equivalence} into a condition on hardware
parameters, define
\[
 S_{\mathrm{red}}(x):=s\bigl(x,\mu_S(x)\bigr),
 \qquad
 w(x):=\frac{n^2}{x}+\frac{n^2}{\sqrt{cP}}.
\]
Theorem~\ref{thm:minimax} shows that $S_{\mathrm{red}}$ is strictly
increasing and $w$ is strictly decreasing.  Therefore  optimizer is determined by the unique crossing of $\alpha S_{\mathrm{red}}(x)$ and $\beta w(x)$, except when the crossing is clipped to an endpoint.
 
Cache effects, message  aggregation, local eigensolver efficiency, and any dependence of the number of Jacobi sweeps on the block size are not represented in the asymptotic  model.  Accordingly, the threshold should be used to identify the expected regime and a small set of nearby admissible candidates, which should then be benchmarked on the target machine.

\end{document}